\definecolor{brickred}{rgb}{0.8, 0.25, 0.33}
\newcommand{\scrG}{{\mathscr G}}
\def\Mon{\operatorname{Mon}}
\def\sing{\operatorname{sing}}
\def\Branch{\operatorname{Branch}}
\def\II{\operatorname{II}}
\renewcommand{\P}{{\mathbb P}}
\newcommand{\sO}{\mathcal O}
\newcommand{\wh}[1]{{\widehat{#1}}}
\newcommand{\wt}[1]{{\widetilde{#1}}}
\newcommand{\Bl}{\operatorname{Bl}}
\newcommand{\Gr}{\operatorname{Gr}}
\newcommand{\Hilb}{\operatorname{Hilb}}
\newcommand{\Sym}{\operatorname{Sym}}
\newcommand{\x}{\times}
\newcommand{\flatt}{{\mathrm{flat}}}
\theoremstyle{plain}
\newtheorem{theorem}[subsection]{Theorem}
\newtheorem{definition}[subsection]{Definition}
\newtheorem{lemma}[subsection]{Lemma}
\newtheorem{corollary}[subsection]{Corollary}
\newtheorem{proposition}[subsection]{Proposition}
\theoremstyle{remark}
\newtheorem{remark}[subsection]{Remark}
\title[Monodromy of the Voisin map]{Lines, Twisted Cubics on Cubic Fourfolds, and the Monodromy of the Voisin Map}
\author[]{Franco Giovenzana}
\address[F. Giovenzana]{Laboratoire de Math\'ematiques d’Orsay\\ Universit\'e Paris-Saclay\\Rue Michel Magat, B\^at. 307, 91405\\ Orsay, France}
\email{franco.giovenzana@universite-paris-saclay.fr}
\author[Franco and Luca Giovenzana]{Luca Giovenzana}
\address[L. Giovenzana]{Department of Pure Mathematics\\ University of Sheffield\\ Hicks Building, Hounsfield Road\\ Sheffield, S3 7RH\\ UK}
\email{l.giovenzana@sheffield.ac.uk}
\begin{document}
\thispagestyle{empty}

\begin{abstract}
For a general cubic fourfold \( Y \) with associated Fano variety of lines \( F \), we show that the monodromy group of the finite degree 16 rational Voisin self-map \( \psi \colon F \dashrightarrow F \) is maximal.
To achieve this, we investigate the intriguing interplay between \( \psi \) and the fixed locus of the antisymplectic involution on the LLSvS variety \( Z \), examined via the degree 6 Voisin map \( F \times F \dashrightarrow Z \).
\end{abstract}

\makeatletter
\@namedef{subjclassname@2020}{
	\textup{2020} Mathematics Subject Classification}
\makeatother

\subjclass[2020]{14J42, 14J35, 14J70}
\keywords{Irreducible symplectic varieties, cubic fourfolds}

\maketitle

\setlength{\parindent}{1em}
\setcounter{tocdepth}{1}






\section{Introduction}
Let $Y$ be a smooth cubic fourfold and $F$ be its (Fano) variety of lines. The rich geometry of these varieties has attracted great attention from the mathematical community for several reasons. Notably, the Fano variety 
$F$ is one of the earliest examples of a locally complete family of projective hyperkähler manifolds. It is deformation equivalent to the Hilbert square of a K3 surface, and has natural polarisation of degree 6 and divisibility 2 \cite{BD}. The duo $Y-F$ between a Fano variety and a hyperk\"ahler manifold has inspired numerous other constructions.

One distinguishing feature of $F$ is the degree 16 rational Voisin self-map 
$\psi\colon F\dashrightarrow F$. Remarkably, 
$\psi$ is the only known finite, non-birational self-map defined on a locally complete family of projective hyperkähler manifolds. Finite maps such as $\psi$ are particularly subtle to study as, unlike birational maps, they cannot be detected through their action on the second cohomology group and lack a comprehensive classification result as Hodge Torelli theorem that would allow precise control over their behavior. Since its construction \cite{voisin-map-F}, this map has continued to attract significant interest in the mathematical community, as demonstrated by the numerous works on the subject\cite{Amerik,GK-invariants, GK-monodromy, GK-lines}.

In this paper, we focus on the monodromy of the Voisin map $\psi$. The monodromy group is a discrete invariant that encodes the symmetries and intrinsic geometry of finite maps. Determining the monodromy group of branched coverings is a classical problem, originating with Jordan in the 1870s. This topic was revitalised by Harris, who provided a modern framework by proving that the monodromy and Galois groups coincide, and developed tools to establish when the monodromy group is maximal. Significant progress was made by Vakil, who introduced innovative techniques to study monodromy in Schubert problems on Grassmannians \cite{vakil}.

Building on these developments, we take a step further by investigating linear spaces on a cubic hypersurface of dimension 4. Our main result shows that given a general cubic fourfold the monodromy group of the Voisin map $\psi$ is maximal, meaning that it is the full symmetric group on 16 elements. 
To achieve this, we leverage another Voisin map involving twisted cubics on the cubic fourfold, which unveils a fascinating connection with the fixed locus of the natural anti-symplectic involution acting on the LLSvS variety. 
\bigskip

We now introduce the necessary notation and recall some useful results in order to state our main theorems.
Lines on a cubic fourfold fall into two cases: For a general line $L$ the linear space
\[
\Lambda_L := \bigcap_{p\in L} T_p Y
\]
is 2-dimensional and in this case the line is said to be of \textit{type I}. For special lines, called of \textit{type II}, the dimension of $\Lambda_L$ is 3, see Proposition~\ref{prop:Lines} and Definition~\ref{def:Types} for more details.
For the general line we have $\Lambda_L \cap Y = 2L + R$ for a line $R\in F$, and one sets $\psi(L) := R$. This map has been studied in detail in \cite{Amerik}, its indeterminacy locus consists of the lines of type $\II$, which for a general cubic fourfold form a smooth surface $S_{\II}$. Blowing up $S_{\II}$ provides a resolution of the indeterminacy
\[
\xymatrix{
&\wh F\ar[ld]\ar[rd]^{\wh \psi}\\
F\ar@{-->}[rr]^{\psi} && F.
}
\]
Gounelas and Kouvidakis recently computed that the restriction of $\wh \psi$ to the exceptional divisor $E$ of the blow-up morphism, which coincides with the ramification divisor of $\wh\psi$,  is birational onto the image.

In Section \ref{sec:ramification} we study the map $\wh\psi$ by considering the projection of $Y$ from a general line $R$ in $Y$. Its resolution is a conic bundle over $\mathbb P^3$ with discriminant locus a quintic surface $S_R$ with exactly 16 nodes. These nodes correspond to the preimage of $R$ under $\wh\psi$.
Nodal quintic surfaces have been classically studied by Beauville and Catanese \cite{Beauville-nodal, Catanese} and more recently studied in \cite{HUY-nodal-quintics,7auth, catanese-new}.
First we study the singularities of $S_R$ for a special line $R$.
\begin{theorem}[see Theorem~\ref{thm:A3-singularity-quintic}]
Let $R\in F$ be a general line in the branch divisor of $\wh\psi$. Then the quintic
surface $S_R$ has one singularity of type $A_3$
corresponding to the unique line $L$ of type $\II$ over $R$.  
\end{theorem}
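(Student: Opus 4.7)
The plan is to analyze the symmetric $3 \times 3$ matrix $M(a)$ presenting the conic bundle in local coordinates near the point $a_0 \in \mathbb{P}^3$ corresponding to the plane $\pi_0 = \mathrm{span}(L, R)$, and to read off the $A_3$ normal form via the splitting lemma in singularity theory.

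First I would choose projective coordinates on $\mathbb{P}^5$ adapted to $L$ and $R$, say $L = V(x_2, x_3, x_4, x_5)$, $R = V(x_1, x_3, x_4, x_5)$, and $\pi_0 = V(x_3, x_4, x_5)$, and parametrise planes through $R$ by $[a_1 : a_3 : a_4 : a_5] \in \mathbb{P}^3$, working in the affine chart $a_1 = 1$ with local coordinates $\epsilon = (a_3, a_4, a_5)$ centred at $a_0 = 0$. Restricting the cubic $F$ to the plane parametrised by $(x_0, x_2, t) \mapsto (x_0, t, x_2, t a_3, t a_4, t a_5)$ factors as $t \cdot (F_1 + t F_2 + t^2 F_3)$ since $R \subset Y$, and the residual conic has associated symmetric matrix $M(a)$ in the variables $(x_0, x_2, t)$. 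The hypothesis $\wh\psi(L, \pi_0) = R$, i.e.\ $\pi_0 \cap Y = 2L + R$, forces $M(a_0) = \mathrm{diag}(0, \lambda, 0)$ with $\lambda \neq 0$; in particular $M(a_0)$ has corank $2$.

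The key observation is that expanding $\det M(a_0 + \epsilon) = \lambda (\delta_{11}\delta_{33} - \delta_{13}^2) + O(|\epsilon|^3)$, with $\delta_{ij}(\epsilon) = m_{ij}(a_0 + \epsilon) - m_{ij}(a_0)$, gives a quadratic part of the convenient form $Q(\epsilon) = \epsilon^{\top} N^{\top} M_0 N \epsilon$, where $M_0$ is the non-degenerate $3\times 3$ matrix of $xz - y^2$ and $N$ is the $3 \times 3$ matrix whose rows $(\alpha_i, \beta_i, \gamma_i)$ are the coefficients of the quadrics $\partial_i F|_L$ for $i = 3, 4, 5$. A short direct check identifies $\det N = 0$ with the condition that $L$ be of type II (equivalently, the rank drop that makes $\Lambda_L$ three-dimensional). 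For the $15$ type-I preimages of $R$ one therefore recovers $A_1$ nodes of $S_R$ right away, while at the unique type-II preimage $Q$ becomes degenerate. For generic $L \in S_{\II}$ the rank of $Q$ is exactly $2$, so the splitting lemma gives a local normal form $y^2 + z^2 + g(t)$ where $t$ parametrises the null direction $v = \ker N$. Restricting to $\epsilon = tv$, and using that the gradients of $m_{11}, m_{13}, m_{33}$ vanish on $v$ (so these entries have order $t^2$ while the remaining entries have order $t$), a term-by-term count of the six summands of $\det M$ shows that each contributes at order $\geq t^4$; this yields $\det M|_{\epsilon = tv} = c \cdot t^4 + O(t^5)$, and hence the normal form $y^2 + z^2 + c \cdot t^4$ of an $A_3$ singularity.

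The main obstacle is to verify that $c \neq 0$ for $L$ general in $S_{\II}$, which is what distinguishes $A_3$ from accidentally deeper singularities like $A_5$. I would handle this either by a dimension count showing that the condition $c = 0$ would cut out a proper subvariety of $S_{\II}$, or by computing $c$ on an explicit cubic fourfold using the description of $S_{\II}$ recalled in Section~\ref{sec:ramification}.
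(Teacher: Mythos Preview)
Your approach is genuinely different from the paper's and considerably more conceptual. The paper proceeds by direct computation: it places $R$ and $L$ in Clemens--Griffiths normal form, translates the three conditions ``$L\subset Y$'', ``$R$ residual to $L$'', and ``$L$ of type~$\II$'' into explicit polynomial constraints on the coefficients of $f$, and then hands the determinantal quintic to Macaulay2 to certify the $A_3$ singularity. Your route instead explains \emph{why} the type~$\II$ condition produces the extra degeneration: the identification of the quadratic part of $\det M$ with $\lambda\,\epsilon^\top N M_0 N^\top \epsilon$ (up to a transpose convention), and of $\det N=0$ with the type~$\II$ condition via the partial derivatives $\partial_i F|_L$, is correct and is the conceptual core that the paper's computer check hides. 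Your order count along the kernel direction is also right and cleanly shows that the singularity is at least~$A_3$.

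There is, however, a gap in the last step. The coefficient in the splitting-lemma normal form $y^2+z^2+g(t)$ is \emph{not} the $t^4$-coefficient of $\det M|_{\epsilon=tv}$. The splitting lemma gives $g(t)=\det M\bigl(y^*(t),tv\bigr)$ where $y^*(t)$ solves $\partial_y(\det M)=0$, and the difference $g(t)-\det M(0,tv)$ is itself of order~$t^4$ (already in the toy model $f(y,z)=y^2+ayz^2+bz^4$ one has $f(0,z)=bz^4$ but $g(z)=(b-a^2/4)z^4$). So your argument proves the singularity is $\geq A_3$, but the non-vanishing condition distinguishing $A_3$ from $A_{\geq 5}$ is not your $c$: one must include the quadratic correction coming from $m_{12},m_{23}$. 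This does not wreck the strategy, since your proposed fallback of checking on an explicit cubic is exactly what the paper's Macaulay2 verification amounts to; but the sentence ``hence the normal form $y^2+z^2+c\cdot t^4$'' is not justified as written. (A small slip: for $R$ in the branch divisor there are $14$ type-I preimages, not $15$; the simple ramification absorbs two of the $16$ generic nodes into the single $A_3$ point.)
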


In Section~\ref{sec:variety-P} we shift our focus to the geometry of twisted cubics. With any smooth cubic fourfold $Y$ not containing a plane Lehn, Lehn, Sorger, and van Straten associated an eight-dimensional hyperkähler variety $Z$ parametrising families of twisted cubics and their flat degenerations. The variety $Z$ is equipped with a natural antisymplectic involution $\tau$ \cite{Lehn-oberwolfach}.
Its fixed locus is a smooth Lagrangian submanifold with 2 connected components: one is isomorphic to the cubic fourfold $Y$, the other one, $W$, is of general type \cite{FMSOG-II} and remains somewhat mysterious. Using the degree 6 rational map $\varphi\colon F\times F \dashrightarrow Z$ constructed by Voisin \cite{Voisin-map-varphi}, we offer an alternative description of $W$.
We define the variety $P$ as the closure in $F\times F$ of 
\[
\{ (L_1,L_2)\in F\times F : L_i \text{ are of type I,\quad $L_1\not = L_2$,\quad and }\quad \psi(L_1)=\psi(L_2) \},
\]
which is birational to an irreducible component of the self-product of $\wh F$ over $F$.
\begin{theorem}[see~Theorem~\ref{thm: P->W}]
    The variety $P$ is mapped onto $W$ under the Voisin map $\varphi\colon F\times F \dashrightarrow Z$.
\end{theorem}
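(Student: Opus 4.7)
The strategy is to analyse $\varphi$ restricted to $P$ and show its image equals $W$. I would split the argument into three steps: (1) the containment $\varphi(P) \subseteq \mathrm{Fix}(\tau)$; (2) exclusion of the component of $\mathrm{Fix}(\tau)$ isomorphic to $Y$; (3) surjectivity onto $W$ via a dimension count.

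For step (1), fix a general pair $(L_1, L_2) \in P$ and set $R := \psi(L_1) = \psi(L_2)$. The defining condition $\Lambda_{L_i} \cap Y = 2L_i + R$ forces $R \subset \Lambda_{L_i}$, hence $L_i$ meets $R$ in a single point $p_i$. Thus $L_1 \cup R \cup L_2$ is a chain of three lines on $Y$, contained in the $\P^3$ spanned by the two planes $\Lambda_{L_1}$ and $\Lambda_{L_2}$. The intersection $S := Y \cap \P^3$ is a (generically singular) cubic surface, and the chain $L_1 + R + L_2$ defines a generalised twisted cubic in $S$ whose class in $Z$ I expect to coincide with $\varphi(L_1, L_2)$, by direct inspection of Voisin's construction. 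Since the antisymplectic involution $\tau$ is governed by the residuation of twisted cubics in such cubic surface sections, and the chain $L_1 + R + L_2$ is symmetric under the interchange of the outer components $L_1, L_2$, the resulting configuration is self-residual and its class is fixed by $\tau$.

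For step (2), I would invoke the known description of the component of $\mathrm{Fix}(\tau)$ isomorphic to $Y$: its points correspond to generalised twisted cubic configurations collapsed to a single point of $Y$ (e.g.\ non-reduced schemes supported at one point, or three concurrent lines). For a general $(L_1, L_2) \in P$ the chain $L_1 \cup R \cup L_2$ has its two nodes at the distinct points $p_1, p_2 \in R$, so it is not of this collapsed type, and hence the image lies in $W$. For step (3), since $\psi$ has degree $16$ the variety $P$ is irreducible of dimension $4$, as a component of $\wh F \times_F \wh F$ off the diagonal. The Lagrangian submanifold $W \subset Z$ is irreducible and also of dimension $4$. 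The chain identification in step (1) shows that $\varphi|_P$ identifies the ordered pair $(L_1, L_2)$ with the unordered chain $L_1 + R + L_2$; thus $\varphi|_P$ is generically $2{:}1$, in particular generically finite, and combined with irreducibility this gives surjectivity onto $W$.

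I expect the principal obstacle to lie in step (1): precisely identifying $\varphi(L_1, L_2)$ with the class of the chain $L_1 + R + L_2$ in $Z$, and verifying that $\tau$ acts as the residual swap on this particular degenerate configuration. The LLSvS construction passes through a GIT quotient on a component of the Hilbert scheme of generalised twisted cubics, and the chain $L_1 + R + L_2$ lies in the boundary (CM) locus of that quotient; one must ensure the identification is compatible with the quotient and handle the singularities of the cubic surface $S = Y \cap \P^3$ carefully when computing the residual. A secondary technical point will be to rule out, in step (2), that the image accidentally lands in the intersection of the two fixed components, which would require checking that for a general pair in $P$ the two node points $p_1, p_2 \in R$ are genuinely distinct.
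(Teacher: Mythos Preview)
Your overall strategy is sound and runs parallel to the paper's Lemma~\ref{fundamental lemma}, which computes $\varphi(L_1,L_2)=[L_1\cup R\cup L_2]$ exactly as you propose. The paper, however, proves the theorem by a different route: it first shows (Lemma~\ref{lem: fundamental}) that the span $\langle L_1,L_2\rangle$ cuts $Y$ in a Cayley cubic with $L_1,L_2$ as opposite edges, and then uses a root-system computation on that surface (Lemmas~\ref{lem: opposite edges} and~\ref{lem: 4A_1 LLSvS}) to identify the image as the unique $\tau$-fixed aCM family. Your chain argument is more direct; the paper's route has the advantage of simultaneously yielding the birationality of $a|_W\colon W\to G$ onto the Cayley locus, which is part of the full statement of Theorem~\ref{thm: P->W}.

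That said, several of your steps need repair. In step~(1), ``symmetric under interchange of $L_1,L_2$'' does not by itself give ``self-residual'' or ``$\tau$-fixed''; residuation in a quadric is a different operation from swapping the outer lines. What you actually need is the commutative square $\tau\circ\varphi=\varphi\circ\sigma$ (stated in the paper) together with $\varphi(L_1,L_2)=\varphi(L_2,L_1)$. In step~(2), the component $Y\subset Z$ is the image of the \emph{non-aCM} locus under the contraction $Z'\to Z$, not a locus of schemes ``collapsed to a point''; the correct observation is that a chain of three lines with two distinct nodes is aCM, hence its class avoids $Y$ (and your worry about $p_1=p_2$ is moot: the two components of the fixed locus are disjoint, and skewness of $L_1,L_2$ already forces $p_1\neq p_2$). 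In step~(3), your claim that $P$ is irreducible because $\psi$ has degree~$16$ is a non sequitur---the degree only gives $\dim P=4$---and in fact irreducibility of $P$ is one of the harder results of the paper (Proposition~\ref{prop:irreducible}), proved via monodromy. You do not need it here: $W$ is irreducible of dimension~$4$, so generic finiteness of $\varphi|_P$ suffices for dominance. Finally, $\varphi|_P$ is generically $6{:}1$, not $2{:}1$: a point of $Z$ is a \emph{linear system} of twisted cubics, not an individual curve, and on a Cayley cubic all three pairs of opposite edges map to the same $\tau$-fixed family.
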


In Section~\ref{sec:monodromy} after revising the basic notions of monodromy we tackle the study of $\wh \psi$. Despite extensive study and numerous works concerning $\wh\psi$, for example about its entropy \cite{Amerik}, many of its properties remain elusive. Through an investigation of the restriction of the  map $\varphi$ to the variety $P$ we prove the following

\begin{theorem}[see Theorem~\ref{thm:monodromy-maximal}]\label{thm:Main}
    The monodromy group of $\psi$ is the entire symmetric group on 16 elements.
\end{theorem}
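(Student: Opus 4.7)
I would verify Jordan's classical criterion: a primitive subgroup of $S_n$ containing a transposition must equal $S_n$. Since $2$-transitivity implies primitivity, the task reduces to showing that the monodromy group $G$ of $\wh\psi$ (which coincides with that of $\psi$) contains a transposition and acts $2$-transitively on its $16$-element fiber.

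The existence of a transposition follows from the Gounelas--Kouvidakis result that $\wh\psi$ restricts to a birational morphism from the ramification divisor $E$ onto its image, the branch divisor $B$. Indeed, at a general point $b \in B$ the fiber $\wh\psi^{-1}(b)$ consists of one point on $E$ with ramification index $2$ together with $14$ \'etale preimages, so the local monodromy around $b$ is a transposition.

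The core of the argument is to establish $2$-transitivity, which amounts to the irreducibility of the variety $P$ (the closure of the off-diagonal pairs $\{(L_1,L_2) : \psi(L_1)=\psi(L_2),\ L_1\neq L_2\}$, identified birationally with the off-diagonal part of $\wh F \times_F \wh F$). By Theorem~\ref{thm: P->W}, the Voisin map restricts to a dominant morphism $\varphi|_P : P \to W$, where $W$ is a smooth connected---hence irreducible---component of the fixed locus of the antisymplectic involution $\tau$, of dimension $4 = \dim P$. Thus $\varphi|_P$ is generically finite onto an irreducible target. Since $\varphi$ on $F \times F$ is invariant under the swap $\sigma : (L_1, L_2) \leftrightarrow (L_2, L_1)$, the map $\varphi|_P$ factors through $P/\sigma$, and the irreducibility of $P$ reduces to showing that $P$ is not the disjoint union of two components exchanged by $\sigma$.

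The main obstacle is precisely this last connectedness claim. To settle it, I would exhibit an explicit monodromy loop in $F$ whose action on $\wh\psi^{-1}(R)$ exchanges two of the $16$ preimages---equivalently, swaps the two orderings of a pair in $P$. The $A_3$ analysis of Theorem~\ref{thm:A3-singularity-quintic} is tailor-made for this: at a general $R$ in the branch divisor, the quintic surface $S_R$ has an $A_3$ singularity at the unique type $\II$ line over $R$, reflecting the coalescence of two of the $16$ nodes of $S_R$ as $R$ approaches the branch locus. The vanishing-cycle calculation around this $A_3$ collision produces the required swap in the monodromy, forcing $P$ to be connected and, combined with the generic transposition, yielding $2$-transitivity of $G$. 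Jordan's theorem then concludes $G = S_{16}$.
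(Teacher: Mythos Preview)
Your overall architecture matches the paper's: produce a transposition from the simple ramification of $\wh\psi$, establish $2$-transitivity via irreducibility of $P$, and conclude. Your handling of the $\sigma$-connectedness step is also essentially the paper's: the monodromy loop giving the transposition (your ``$A_3$ vanishing cycle'') sends $(L_1,L_2)$ to $(L_2,L_1)$ in $P$, so these lie in the same component.

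There is, however, a genuine gap in your reduction. You assert that once $\varphi|_P$ factors through $P/\sigma$ with irreducible target $W$, ``the irreducibility of $P$ reduces to showing that $P$ is not the disjoint union of two components exchanged by $\sigma$.'' This presupposes that $P/\sigma$ is already irreducible, which does not follow from what you have written. The map $\varphi|_P\colon P\to W$ has degree $6$ (over a general $w\in W$ corresponding to a Cayley cubic the six preimages are the ordered pairs of opposite edges), so $P/\sigma\to W$ has degree $3$, and nothing prevents $P/\sigma$ from splitting into two or three components dominating $W$. Your $A_3$ loop only connects one specific $\sigma$-orbit and says nothing about, e.g., the pairs $(L_1,L_3)$ and $(L_1,L_4)$ in a general fibre. (Also, a small slip: $\varphi$ is not $\sigma$-invariant on $F\times F$; one has $\varphi\circ\sigma=\tau\circ\varphi$, and it is only on $P$, where the image lands in the $\tau$-fixed locus $W$, that $\varphi$ becomes $\sigma$-invariant.)

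The paper fills exactly this gap with the $A_1A_5$ analysis you omit. It shows that there exists $w\in W$ over which $\varphi|_P$ is flat and whose associated cubic surface $S$ has type $A_1A_5$; such an $S$ carries only two lines $L_1,L_2$, so set-theoretically $\varphi^{-1}(w)=\{(L_1,L_2),(L_2,L_1)\}$, each with multiplicity $3$. Hence three of the six preimages of a nearby general point of $W$ specialise to $(L_1,L_2)$ and lie in a single component, and the other three are their $\sigma$-images. This is what forces $P/\sigma$ to be irreducible; only then does your $\sigma$-connectedness step finish the proof.
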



\subsection*{Acknowledgments} This project began a long time ago, and over the years we benefited from conversations with many people. It is our pleasure to thank everybody who expressed interest and shared their point of view, especially Enrico Fatighenti, Frank Gounelas, Christian Lehn, Emanuele Macrì, Giovanni Mongardi, Alan Thompson, and Yilong Zhang.
We also express our gratitude to the anonymous referees for their careful reading.
Franco Giovenzana was funded by Deutsche Forschungsgemeinschaft (DFG, German
Research Foundation) Projektnummer
509501007, and partially supported by the European Research Council (ERC) under
the European Union’s Horizon 2020 research and innovation programme (ERC-2020-
SyG-854361- HyperK). All authors are members of INdAM GNSAGA.

\section{Nodal quintic surfaces with 16 nodes}\label{sec:ramification}
In this section we recall basic facts about lines on cubic fourfolds and various properties of the Voisin map on $F$. Then we move on studying degenerations of nodal quintic surfaces with an even set of 16 nodes naturally arising in this context.

Recall that the Gauss map associates to any point of the smooth cubic fourfold $Y\subset \P(V)$ its projective tangent space:  
\[
\mathscr G\colon Y \to \P(V^\vee),\ P\mapsto T_P Y \simeq \P^4.
\]
Clemens and Griffiths  distinguished lines on cubic hypersurfaces into two types, we recall here the definition for cubic fourfolds \cite{griffiths-clemens}.
\begin{proposition}\label{prop:Lines}
Given a line $L$ on a smooth cubic fourfold $Y$, either the following equivalent conditions hold:
\begin{enumerate}
    \item $N_{L|Y} \simeq \sO_L^{\oplus 2}\oplus \sO_L(1)$,
    \item $\mathscr G|_L\colon L\to \mathscr G (L)$ is 1:1,
    \item $\mathscr G (L)$ is a smooth conic in $\P(V^\vee)$,
    \item $\bigcap_{P\in L} T_P Y$ is isomorphic to $\P^2$;
\end{enumerate}
or the following equivalent conditions hold:
\begin{enumerate}
    \item[(5)] $N_{L|Y} \simeq \sO_L(-1)\oplus \sO_L(1)^{\oplus 2}$,
    \item[(6)] $\mathscr G|_L\colon L\to \mathscr G (L)$ is 2:1,
    \item[(7)] $\mathscr G (L)$ is a line in $\P(V^\vee)$,
    \item[(8)] $\bigcap_{P\in L} T_P Y$ is isomorphic to $\P^3$.
\end{enumerate}
\end{proposition}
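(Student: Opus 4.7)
The plan is to reduce all four equivalences in each of the two cases to a single numerical invariant: the dimension of a certain linear system of binary quadratics attached to $L$.

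First, I would choose coordinates on $V$ so that $L = \{x_2 = \cdots = x_5 = 0\}$. Since $L \subset Y$, the defining cubic can be written $f = \sum_{j=2}^{5} x_j G_j$ for suitable quadratics $G_j$. A direct check shows that $\partial f/\partial x_j|_L = 0$ for $j = 0, 1$, while $q_j := \partial f/\partial x_j|_L = G_j|_L$ is a binary form of degree $2$ on $L \simeq \mathbb{P}^1$ for $j = 2, \ldots, 5$. Smoothness of $Y$ along $L$ translates to the $q_j$ having no common zero, and in these coordinates $\mathscr{G}|_L$ is the morphism to $\mathbb{P}(V^\vee)$ defined by the linear system $W := \langle q_2, \ldots, q_5 \rangle \subset H^0(\mathcal{O}_L(2))$. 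The whole argument then hinges on whether $\dim W = 3$ or $\dim W = 2$; the possibility $\dim W \leq 1$ is excluded by smoothness.

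Next I would combine the conormal sequence of $L \subset Y \subset \mathbb{P}^5$ with the identifications $N_{L|\mathbb{P}^5} \simeq \mathcal{O}_L(1)^{\oplus 4}$ and $N_{Y|\mathbb{P}^5}|_L \simeq \mathcal{O}_L(3)$ to obtain
\[
0 \to N_{L|Y} \to \mathcal{O}_L(1)^{\oplus 4} \xrightarrow{(a_j)\mapsto \sum a_j q_j} \mathcal{O}_L(3) \to 0.
\]
Thus $N_{L|Y}$ is a rank-$3$, degree-$1$ vector bundle on $\mathbb{P}^1$ embedded in $\mathcal{O}_L(1)^{\oplus 4}$, so every summand has degree at most $1$ and the only numerically compatible splittings are $\mathcal{O}^{\oplus 2} \oplus \mathcal{O}(1)$ and $\mathcal{O}(-1) \oplus \mathcal{O}(1)^{\oplus 2}$. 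Twisting the sequence above by $\mathcal{O}_L(-1)$ and taking global sections yields $h^0(N_{L|Y}(-1)) = 4 - \dim W$, which equals $1$ in the former splitting and $2$ in the latter. This establishes (1) $\Leftrightarrow$ $\dim W = 3$ and (5) $\Leftrightarrow$ $\dim W = 2$.

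For the remaining conditions, $\mathscr{G}|_L$ is basepoint-free and defined by a linear series of degree $2$ on $\mathbb{P}^1$; its image lies in the projective span of $W$, a subspace of $\mathbb{P}(V^\vee)$ of dimension $\dim W - 1$. The identity $\deg \mathscr{G}|_L \cdot \deg \mathscr{G}(L) = 2$ together with irreducibility of the image forces $\mathscr{G}(L)$ to be a smooth plane conic with $\mathscr{G}|_L$ birational when $\dim W = 3$, and a line with $\mathscr{G}|_L$ of degree $2$ when $\dim W = 2$; this yields (2), (3) and (6), (7). Finally, $\bigcap_{P \in L} T_P Y$ is the projective annihilator of $\mathrm{span}(\mathscr{G}(L)) \subset \mathbb{P}(V^\vee)$, hence of projective dimension $4 - (\dim W - 1)$, which gives (4) and (8). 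I do not anticipate a real obstacle — the proposition is classical — the most delicate point is Step 2, where one must verify that the numerical constraints (rank, degree, and the $\mathcal{O}_L(1)^{\oplus 4}$-subsheaf condition) really permit only the two splittings and that $h^0(N_{L|Y}(-1))$ cleanly distinguishes them.
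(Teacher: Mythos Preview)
Your argument is correct and is essentially the classical proof. The paper itself does not give a proof of this proposition: it is stated as a recollection of the Clemens--Griffiths dichotomy, with an implicit reference to \cite{griffiths-clemens} (and the characterisation in terms of partial derivatives reappears in the paper as Remark~\ref{huy-derivatives}, citing \cite{HuyBookCubics}). What you have written is precisely the standard derivation behind those references: reduce everything to the dimension of the span $W=\langle q_2,\dots,q_5\rangle\subset H^0(\mathcal O_L(2))$, identify $h^0(N_{L|Y}(-1))=4-\dim W$ via the normal bundle sequence, and read off the Gauss-map and tangent-intersection statements from $\dim W\in\{2,3\}$. One tiny cosmetic point: in the last step your formula ``$4-(\dim W-1)$'' for the projective dimension of $\bigcap_{P\in L}T_PY$ is correct (it equals $5-\dim W$), but it may be clearer to phrase it directly as the projective annihilator in $\mathbb P(V)$ of a $\dim W$-dimensional linear subspace of $V^\vee$.
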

\begin{definition}[{\cite[Definition~6.6]{griffiths-clemens}}]\label{def:Types}
Given a line $L$, we say that $L$ is a line of type I if the equivalent conditions $(1)-(4)$ hold, whereas if $(5)-(8)$ hold we say that $L$ is a line of type II.  We set $\Lambda_L:= \cap_{P\in L} T_P Y$.
\end{definition}
We record here these elementary observations for future reference.
\begin{remark}
In case $L\in F$ is of type $\II$, then the line $\mathscr G(L) \subset \P(V^\vee)$ is the projective dual of $ \Lambda_L \subset \P(V)$. In case of a line of type I, $\mathscr G(L)$ spans a $\P^2$ in $\P(V^\vee)$ which is dual to $\Lambda_L$.
\end{remark}

\begin{remark}\cite[Remark~2.2.2]{HuyBookCubics}\label{huy-derivatives}
    Let $Y = V(f)\subset \P^5$ be a smooth cubic fourfold, let $L$ be a line in $Y$, then $L$ is of type II if and only if the partial derivatives $\partial_0 f|_L,...,\partial_5 f|_L$ span a vector space of dimension 2 in $H^0(L,\sO_{\mathbb P^5}(2))$.
\end{remark}
Voisin observed that if $L$ is a line of type I, then $\Lambda_L\cap Y$ is a cubic curve, which, as it contains $L$ with multiplicity 2, consists of $L$ and a residual line $R$. As the general line is of type I, one gets the following

\begin{definition}
    The Voisin map $\psi$ is the rational map defined by mapping a line $L$ of type I to its residual $R$
\begin{align*}
\psi\colon F \dashrightarrow F, \quad
L \mapsto R.
\end{align*}
\end{definition}

In general we say that the line $R$ is \emph{residual} to $L$ if there exists a projective plane $\Lambda$ such that $\Lambda\cap Y= 2L + R$. Following Gounelas and Kouvidakis if a line is residual to itself, we refer to it as \emph{triple} line. For the general cubic fourfold the locus of triple lines is an irreducible surface of general type \cite[Theorem~A]{GK-lines}. We denote it $V$.

The map $\psi$ has been studied in \cite{voisin-map-F} and \cite{Amerik}, where it is proven to be finite of degree 16. For a general cubic fourfold, lines of type II form a smooth surface $S_{\II}$ in $F$, and blowing up $F$ in this surface resolves the indeterminacy of $\psi$:

\begin{align}\label{diag:Res}
\xymatrix{
&\wh F\ar[ld]_{pr}\ar[rd]^{\wh \psi}\\
F\ar@{-->}[rr]^{\psi} && F.
}
\end{align}

The blow up $\wh F$ can be identified as the closure of the graph \cite[Lemma 4.1]{GK-lines},\cite[Remark 2.2.19]{HuyBookCubics}.
\begin{align*}
F \dashrightarrow \Gr(3,6), \ L \mapsto \Lambda_L.
\end{align*}
Elements in the exceptional locus are then just pairs $(L,\Xi)$, where $L$ is a line of type II and $\Xi$ is a projective plane such that
$L \subset \Xi \subset \Lambda_L$. For such 2-dimensional spaces $\Xi$ we have that $\Xi \cap Y = 2L + R$, in particular $R$ is residual to $L$. 

Given a general line $R\in F$, we consider the diagram
\[\xymatrix{
&\wt Y:=\Bl_R Y\ar[ld]_p\ar[rd]^{\wt \pi}\\
Y \ar@{-->}[rr]^{\pi_R} && \P^3
}
\]
where $\pi_R$ is the projection from $R$ and $\wt Y$ is the blow-up of $Y$ in $R$. The morphism $\wt \pi$ is a conic bundle  with discriminant a quintic surface $S_R$, whose singular locus consists of 16 nodes. The 16 nodes correspond to the preimage of $R$ under $\wh\psi$. Indeed, if $p_i$ denote the nodes of $S_R$ for $i=1,..,16$, then $L_i:=p(\wt \pi^{-1}(p_i))$ are the 16 lines for which $\psi(L_i) = R$ (see \cite[\S~6.4.5]{HuyBookCubics} for an account on the various results in the literature about this surface).

Let $E$ be the exceptional divisor of the blow-up morphism $\wh F \to F$. As $F$ has trivial canonical bundle, the divisor $E$ coincides with the ramification locus of the map $\wh \psi$.

\begin{theorem}[{\cite[Theorem B]{GK-monodromy}} ]\label{GK-RamBirational} Let $Y$ be a general cubic fourfold.
    The restriction $\wh \psi|_E \colon E \to F$ is generically 1-to-1 onto the image.
\end{theorem}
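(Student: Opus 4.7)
The plan is to reduce the birationality of $\wh\psi|_E$ to a fibre count over its image. I would first exploit the explicit graph description of $\wh F$ to identify $E$ with a $\mathbb P^1$-bundle over the smooth surface $S_{\II}$: the fibre $E_L$ over a type $\II$ line $L$ is the pencil of planes $L\subset \Xi\subset \Lambda_L\simeq \mathbb P^3$, and the restriction $\wh\psi|_E$ sends the pair $(L,\Xi)$ to the residual line $R$ in $\Xi\cap Y=2L+R$. In particular $E$ is an irreducible threefold, and since $\wh\psi$ is finite so is $\wh\psi|_E$; hence its image is a threefold in $F$ and birationality onto the image amounts to showing that the general set-theoretic fibre is a single point.

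Next, I would establish two complementary uniqueness statements. For a fixed type $\II$ line $L$, the residual map $E_L\to F$, $\Xi\mapsto R$, is injective: two distinct planes $\Xi_1\ne \Xi_2$ inside $\Lambda_L\simeq \mathbb P^3$ containing $L$ meet only along $L$, so a common residual line $R\ne L$ would force $R\subset \Xi_1\cap \Xi_2=L$, a contradiction. In the opposite direction, for a general $R$ in the branch divisor Theorem~\ref{thm:A3-singularity-quintic} supplies a \emph{unique} type $\II$ line $L$ lying over $R$, namely the one associated with the single $A_3$ singularity of the discriminant quintic $S_R$. Combining the two, there is exactly one pair $(L,\Xi)$ over a general $R\in \wh\psi(E)$, which is the desired birationality.

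The main obstacle is the uniqueness of the type $\II$ line, i.e.\ the content of Theorem~\ref{thm:A3-singularity-quintic}. The natural attack is a local study of the conic bundle $\wt\pi\colon \wt Y\to \mathbb P^3$ and of its discriminant $S_R$ as $R$ is moved transversally into the branch divisor. Using the description in Remark~\ref{huy-derivatives} of type $\II$ lines via the partial derivatives of the cubic equation of $Y$, one should take a one-parameter family $R_t$ with $R_0$ a general branch point, track how two of the sixteen nodes of $S_{R_t}$ collide at $t=0$, and verify through an explicit local normal form that the limiting singularity is of type $A_3$ and that its location on $S_{R_0}$ pins down a single type $\II$ line. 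The technical nuisance is excluding competing degeneration patterns, such as three nodes colliding into $D_4$ or two disjoint pairs of nodes colliding simultaneously, which a general-position argument for the cubic fourfold should settle.
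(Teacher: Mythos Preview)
There is a genuine circularity in your proposal. The paper does not prove Theorem~\ref{GK-RamBirational} at all: it is quoted from Gounelas--Kouvidakis and then \emph{used as an input} to the proof of Theorem~\ref{thm:A3-singularity-quintic}. Look at the first line of that proof: ``let $L$ be the unique line of type $\II$ with residual $R$'' --- this uniqueness is precisely the assertion of Theorem~\ref{GK-RamBirational}, and the argument then proceeds to compute, for that single $L$, the local type of the corresponding singularity of $S_R$. The alternative proof of Proposition~\ref{cor:ram-simple} makes the logical order explicit: first one invokes Theorem~\ref{GK-RamBirational} to know there is a single ramification point, then Theorem~\ref{thm:A3-singularity-quintic} identifies its singularity type as $A_3$, and only \emph{then} does one conclude that exactly two of the sixteen nodes coalesce. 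Your plan reverses this flow, deducing the uniqueness of $L$ from the $A_3$ singularity; as written it is circular.

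You do recognise that the $A_3$ statement is the crux and sketch an independent attack via a one-parameter family $R_t$, but the sketch begs the question: you write ``track how two of the sixteen nodes of $S_{R_t}$ collide at $t=0$'', yet the number \emph{two} is exactly what is at stake. Without Theorem~\ref{GK-RamBirational} you have no a priori reason to exclude, say, two separate pairs of nodes colliding (two $A_3$ points, hence two type $\II$ lines over $R_0$). Your proposed remedy, ``a general-position argument for the cubic fourfold'', is not an argument; ruling out such configurations is precisely the content of the Gounelas--Kouvidakis result you are trying to reprove. If you want a self-contained proof you must supply a genuinely independent mechanism --- for instance a direct dimension count on the incidence locus $\{(L,R): L\in S_{\II},\ R \text{ residual to } L\}$, or the computation actually carried out in \cite{GK-monodromy} --- rather than bootstrapping from Theorem~\ref{thm:A3-singularity-quintic}.
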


\begin{proposition}\label{cor:ram-simple}
    Let $Y$ be a general cubic fourfold. The ramification at the general point of the ramification locus of $\wh\psi$ is simple.
\end{proposition}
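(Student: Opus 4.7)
\medskip
\noindent\textbf{Proof plan.}
The plan is to read off the ramification index at a general $x_0 \in E$ directly from the $A_3$ singularity of $S_{R_0}$ provided by \cref{thm:A3-singularity-quintic}, where $R_0 := \wh\psi(x_0)$. Let $e$ denote the local degree of $\wh\psi$ at $x_0$; the containment $x_0 \in E$ immediately gives $e \geq 2$, and the task is to show the reverse inequality $e \leq 2$.

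I would first set up the preimage--singularity dictionary. By \cref{GK-RamBirational}, $x_0$ is the unique preimage of $R_0$ lying on $E$, so the scheme-theoretic fiber $\wh\psi^{-1}(R_0)$ consists of $x_0$ with length $e$ together with $16 - e$ further reduced points. Under the identification of singular points of $S_R$ with distinct preimages of $R$, these correspond respectively to the $A_3$ of $S_{R_0}$ and to $16 - e$ ordinary nodes, for a total of $17 - e$ singular points.

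Next I would exhibit the $A_3$ as a collision of $e$ nodes. Pick an analytic arc $R(t) \subset F$ through $R_0$ meeting the branch divisor transversely at $t = 0$; for $t \neq 0$ the surface $S_{R(t)}$ has its generic $16$ nodes, of which $16 - e$ limit to the persistent nodes of $S_{R_0}$ while the remaining $e$ sit close to the image of $x_0$ and coalesce at the $A_3$ point as $t \to 0$. Thus the $A_3$ arises as the limit of exactly $e$ colliding nodes in a one-parameter smoothing.

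Finally I would invoke a purely local bound on smoothings of an $A_3$ surface singularity $x^2 + y^2 + z^4$: any such smoothing contains at most two ordinary double points. Indeed, in the miniversal unfolding $F_{(t_1, t_2, t_3)} = x^2 + y^2 + z^4 + t_1 z^2 + t_2 z + t_3$ the nodes of a fiber correspond to the common zeros of the cubic $4 z^3 + 2 t_1 z + t_2$ and the quartic $z^4 + t_1 z^2 + t_2 z + t_3$, and a direct comparison of coefficients shows that these polynomials can share a factor of degree $3$ only when $(t_1, t_2, t_3) = (0,0,0)$; for any other parameter value they share at most two roots. Therefore $e \leq 2$, and combined with $e \geq 2$ we conclude $e = 2$. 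The main obstacle in the plan is the third step, namely the analytic matching between the $e$ preimages of $R(t)$ accumulating at $x_0$ and $e$ distinct nodes of $S_{R(t)}$ converging to the $A_3$ point of $S_{R_0}$, which requires tracking carefully how the singular locus of the conic bundle discriminant depends on $R$ as it crosses the branch divisor.
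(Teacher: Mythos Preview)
Your approach is correct and, in fact, the paper itself records a version of it as an \emph{alternative} proof of the proposition (given right after the proof of \cref{thm:A3-singularity-quintic}). There the authors simply use the full statement of \cref{thm:A3-singularity-quintic}---that $S_{R_0}$ has exactly $14$ nodes and one $A_3$---and count: the $14$ nodes account for $14$ unramified preimages, so the unique ramified preimage carries multiplicity $16-14=2$. Your variant instead only invokes the $A_3$ part and then bounds $e\le 2$ via the miniversal deformation of $A_3$; this is slightly more work, but it has the virtue of making explicit the local fact (a quartic in one variable has at most two double roots) that underlies the node count.

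That said, the paper's \emph{primary} proof is a one-line canonical bundle computation that bypasses the quintic surfaces entirely. Since $K_F=0$ and $\wh F=\Bl_{S_{\II}}F$, one has $K_{\wh F}=pr^*K_F+E=E$; on the other hand $K_{\wh F}=\wh\psi^*K_F+R=R$, where $R$ is the ramification divisor with its natural scheme structure. Hence $R=E$ as divisors, and since $E$ is reduced the ramification is simple. This argument needs neither \cref{GK-RamBirational} nor \cref{thm:A3-singularity-quintic}, and it avoids the analytic matching you flag as the main obstacle. Your route, by contrast, is what one would naturally try when the canonical is not trivial or when one wants to extract more refined information about the collision of preimages---which is indeed how the paper later uses the $A_3$ analysis.
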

\begin{proof}
    Comparing the pullbacks of the canonical of $F$ in diagram~\eqref{diag:Res} one gets:
    \begin{align*}
        E = pr^*(K_F) + E = K_{\widehat F}= \widehat\psi^*(K_F) + R = R
    \end{align*}
    where $R$ is the ramification locus with the natural schematic structure. Hence the ramification is simple by \cite[Lemma~I.16.1]{BHPvV}
\end{proof}

In other words, over the general point $R$ in the branch divisor of $\wh \psi$, there is exactly one point of ramification, i.e. there exists exactly one line $L$ of type II with residual $R$, meaning that $\wh\psi (L, \langle L,R \rangle) = R$.
\begin{theorem}\label{thm:A3-singularity-quintic}
    Let $R\in F$ be a general line in the branch divisor of $\wh \psi$. Then the quintic surface $S_R$ has exactly 14 singularities of type $A_1$ and exactly 1 singularity of type $A_3$ corresponding to the unique line $L$ of type $\II$ with residual line $R$.
\end{theorem}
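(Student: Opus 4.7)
The plan is to first identify the $15$ singular points of $S_R$ with the distinct points of $\wh\psi^{-1}(R)$. By Proposition~\ref{cor:ram-simple}, these consist of $14$ unramified type I lines $L_i$ with $\psi(L_i)=R$ and a single simply-ramified point $(L,\Xi)\in\wh F$, where $L$ is the unique type II line with residual $R$. At each unramified preimage the covering $\wh\psi$ is étale, so the corresponding singular point on $S_R$ is the specialization of a smooth family of $A_1$ nodes on nearby surfaces $S_{R'}$ and hence is itself $A_1$.

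For the remaining singular point $p_L\in S_R$ lying over $L$, I would choose coordinates so that $R=\{x_0=x_1=x_2=x_3=0\}$, $L=\{x_0=x_1=x_2=x_4=0\}$, and $\Xi=\{x_0=x_1=x_2=0\}$, and expand the cubic $f$ of $Y$ as $f=F_1(y;w)+F_2(y;w)+F_3(y)$ with $y=(x_0,\dots,x_3)$ and $w=(x_4,x_5)$. The conditions $R,L\subset Y$ and $\Xi\cap Y=2L+R$ normalize $f$ so that the residual conic of the projection from $R$ has an explicit symmetric $3\times 3$ matrix $M(y)$, with $M(p_L)$ of rank one. Expanding $\det M$ in affine coordinates $(y_0,y_1,y_2)$ around $p_L=[0:0:0:1]$, the quadratic part equals a determinantal form $\ell_\alpha\ell_{\tilde A}-\tfrac14\ell_\beta^2$, where three vectors $\vec\alpha,\vec\beta,\vec{\tilde A}\in\mathbb{C}^3$ encode the binary quadrics $\partial_{y_i}f|_L$ on $L\cong\mathbb{P}^1$ for $i=0,1,2$. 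By Remark~\ref{huy-derivatives}, $L$ is of type II precisely when these three quadrics are linearly dependent, and writing the form as $\tfrac14\bigl((\ell_\alpha+\ell_{\tilde A})^2-(\ell_\alpha-\ell_{\tilde A})^2-\ell_\beta^2\bigr)$ shows this is equivalent to $\det M$ having quadratic part of rank $\leq 2$. For $R$ general in the branch divisor the rank is exactly $2$, so $p_L$ is an $A_n$ singularity with $n\geq 2$.

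To pin down $n=3$, I would use simple ramification once more. Let $\gamma\subset F$ be a smooth curve through $R$ transverse to the branch divisor; above $\gamma$ the sheet through $(L,\Xi)$ splits into two branches meeting at $(L,\Xi)$ quadratically in $t$, corresponding to two type I lines $L_\pm(t)$ with $\psi(L_\pm(t))=\gamma(t)$ colliding at $L$ with separation $\sim\pm\sqrt{t}$. Consequently the two $A_1$ nodes of $S_{\gamma(t)}$ near $p_L$ lie at positions $\sim\pm\sqrt{t}$ along the null direction of the rank-$2$ quadratic form. After diagonalizing locally, $\det M$ takes the form $x^2+y^2+P_t(z)$ in suitable coordinates, and the polynomial $P_t(z)$ has double roots at $z\sim\pm\sqrt{t}$; hence $P_0(z)=z^4\cdot(\mathrm{unit})$, which is precisely the $A_3$ normal form. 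The higher types $A_{2k-1}$ with $k\geq 3$ are excluded since they would require at least three sheets of $\wh\psi$ to collide, contradicting simplicity.

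The main obstacle is the deformation-theoretic argument in the previous paragraph: justifying the normal form $\det M\sim x^2+y^2+P_t(z)$ in a family over $\gamma$ and rigorously matching the roots of $P_t$ to the positions of the two nearby $A_1$ nodes. A more direct computational alternative is to compute $\det M$ up to order $4$ along the null direction of the quadratic part at $p_L$ using the explicit entries of $M(y)$ and to verify that the leading order-$4$ coefficient is nonzero for a generic line in the branch divisor.
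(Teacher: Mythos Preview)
Your approach differs substantially from the paper's. The paper writes the cubic in Clemens--Griffiths normal form with $R=V(x_2,\ldots,x_5)$ and $L=V(x_1,x_2,x_4,x_5)$, translates the conditions ``$L\subset Y$'', ``$R$ residual to $L$'' and ``$L$ of type~II'' into explicit polynomial constraints on the coefficients of $Q_0,Q_1,P$, and then verifies with Macaulay2 that $\det M$ has an $A_3$ singularity at the image point $q$ of $L$. Your route is instead conceptual: you identify the corank of the singularity via the type-II condition and Remark~\ref{huy-derivatives}, and then attempt to pin down $A_3$ by a deformation argument using the simple ramification of $\wh\psi$ from Proposition~\ref{cor:ram-simple}.

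The corank-$1$ step is reasonable, but the passage from ``two $A_1$ nodes collide under simple ramification'' to ``the limit is exactly $A_3$'' has a genuine gap. Your claim that higher types $A_{2k-1}$ with $k\ge 3$ would require at least three sheets of $\wh\psi$ to collide is false: an $A_5$ singularity (and likewise $A_4$) admits one-parameter deformations whose nearby fibres carry exactly two $A_1$ nodes---for instance $x^2+y^2+(z-a(t))^2(z-b(t))^2(z-c(t))(z-d(t))$ with $a,b,c,d$ pairwise distinct for $t\ne 0$ and all tending to $0$. Nor does knowing that the two nodes sit at positions $\sim\pm\sqrt{t}$ along the null direction force $P_0(z)=z^4\cdot(\mathrm{unit})$; one must still exclude that the residual factor of $P_t$ acquires a zero at the collision point, and that is precisely the order-$4$ nonvanishing you flag at the end as the ``main obstacle''. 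The computational alternative you propose there---expanding $\det M$ to order $4$ along the null direction and checking the leading coefficient for a generic branch line---is essentially what the paper carries out, delegating the verification to Macaulay2.
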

\begin{proof}
    Let $R$ be a general line in the branch locus of $\wh\psi$ and let $L$ be the unique line of type $\II$ with residual $R$. We may assume that $R$ is of type I and  that is given as the zero set $V(x_2,x_3,x_4,x_5)$. Following Clemens and Griffiths \cite[equation~6.9]{griffiths-clemens} we write the equation of the cubic fourfold as
    \begin{equation}
       f:= x_4x_0^2 + x_5x_0x_1 + x_3x_1^2 + x_0Q_0 + x_1Q_1 + P = 0,
    \end{equation}
    where $Q_0,Q_1$ and $P$ are homogeneous polynomials in the variables $x_2,x_3,x_4,x_5$ of degree 2, 2, and 3 respectively. 
    The quintic surface $S_R\subset V(x_0,x_1) \simeq \mathbb P^3$ is given by the determinant of the matrix
    \[
    M:=\begin{pmatrix}
P & Q_0 & Q_1\\
Q_0 & x_4 & x_5 \\
Q_1 & x_5 & x_3
\end{pmatrix}.
\]

We assume that the line $L$ is given by $V(x_1,x_2,x_4,x_5)$,
then $p := (1:0:0:0:0:0)$ is the intersection of $L$ and $R$, and $q:=(0:0:0:1:0:0)$ is the point of intersection with $\mathbb P^3$.

We translate these assumptions in conditions on the coefficients of $Q_0,\ Q_1$, and $P$.
For this, we introduce the notation $Q_0 = \sum a_I\underline{x}^I$, $Q_1 = \sum b_I\underline x^I$ and $P = \sum c_I\underline x^I$.

The cubic fourfold $Y$ contains the line $L$ if $f(x_0,0,0,x_3,0,0,0)$ is the zero form in $\mathbb C[x_0,x_3]$, hence we get the condition
\begin{align}\label{eq: cond1}
a_{33} = c_{333} = 0.
\end{align}

The fact that the line $R$ is residual to $L$, i.e. $\langle L, R\rangle \cap Y = 2L+R$, translates into the further condition
\begin{align}\label{eq: cond2}
b_{33} = 0.
\end{align}

By Remark~\ref{huy-derivatives} the fact that the line $L$ is of type II can be rephrased in terms of the derivatives $\partial_i f|_L$, and we get the additional condition
\begin{align}\label{eq: cond3}
 a_{23}c_{335} - a_{35}c_{233} = 0.
\end{align}

To conclude the proof we check with the aid of Macaulay2 that the surface defined by det$(M)$ for polynomials $Q_0,\ Q_1,$ and $P$ with coefficients satisfying conditions~\eqref{eq: cond1},~\eqref{eq: cond2}, and \eqref{eq: cond3}
has a singularity of type $A_3$ at the point $q$. See ancillary file. 
\end{proof}

As a consequence we obtain an alternative proof of the simpleness of the ramification of $\widehat\psi$.

\begin{proof}(of Proposition~\ref{cor:ram-simple})
    Let $b$ be a general point in the branch divisor of $\psi$. Then by Theorem \ref{GK-RamBirational} the preimage $\wh\psi^{-1}(b)$ consists of a finite number of points $p_1,\ldots p_k$ where only $p_1$ is of ramification, while $\wh\psi$ is \'etale at the points $p_2,\ldots p_k$. All the points $p_i$ correspond to singularities of the quintic surface $S_b$: in particular $p_2,\ldots p_k$ are singularities of type $A_1$, in contrast $p_1$ is a singular point of type $A_3$ by Theorem~\ref{thm:A3-singularity-quintic}. 
    The surface $S_b$ is a specialization of the surface $S_R$ for the general line $R\in F$, which has exactly 16 nodes. Thus we deduce that 2 points among these 16 specialize to the singularity of type $A_3$, whereas the remaining 14 specialize to the $p_2,..,p_k$, deducing that $S_b$ has 14 singularities of type $A_1$ apart from the $A_3$ singularity.
\end{proof}
Let $R$ be any line on the cubic fourfold, we denote by $F_R$ the locus of lines meeting $R$
\[
F_R :=\lbrace L \in F: L\cap R \not = \emptyset \rbrace.
\]
If $R$ is general, then $F_R$ is a smooth surface \cite[\S 3~ Lemma~1]{voisin-torelli}. As an application of our analysis we study the singularities of $F_R$ for a general point $R$ in the branch locus of $\wh\psi$.

\begin{corollary}\label{cor:SurfLinesBranch}
Let $R\in F$ be a general line in the branch locus of $\wh \psi$. If the surface $F_R$ is normal, then it has exactly one singular point, which is of type $A_1$ and corresponds to the unique line $L$ of type $\II$ with residual line $R$.
\end{corollary}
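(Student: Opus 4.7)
The strategy is to realise $F_R$ as a natural double cover of the discriminant quintic $S_R$ associated to the conic bundle $\wt\pi\colon \wt Y\to\mathbb P^3$, and then to transport Theorem~\ref{thm:A3-singularity-quintic} into a statement about the singularities of $F_R$.

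First I would construct a finite morphism of degree two $\mu\colon F_R\setminus\{R\}\to S_R$. To a line $L\neq R$ meeting $R$, one associates the point $\mu(L)\in\mathbb P^3$ through which the plane $\langle L,R\rangle$ projects under $\pi_R$. Since $\langle L,R\rangle\cap Y=R+L+L'$ for some residual line $L'$, the point $\mu(L)$ lies on $S_R$. Over a smooth point of $S_R$ the conic fiber of $\wt\pi$ is a pair of distinct lines, so the fiber of $\mu$ has cardinality two; over a singular point of $S_R$ the conic fiber is a double line $2L$, and the fiber of $\mu$ is the singleton $\{L\}$ with $\langle L,R\rangle\cap Y=2L+R$, i.e.\ $L\in\wh\psi^{-1}(R)$.

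Under the normality hypothesis on $F_R$ and using that $F_R$ is connected, being a specialisation of the smooth irreducible surface $F_{R_0}$ for a general $R_0\in F$, the morphism $\mu$ identifies $F_R\setminus\{R\}$ locally analytically with the unique connected \'etale-in-codimension-one double cover of $S_R$. Since the local fundamental group of a Du Val $A_n$-singularity $\mathbb C^2/(\mathbb Z/(n+1))$ is cyclic of order $n+1$, this distinguished double cover corresponds to the unique index-two subgroup: over an $A_1$-singularity it is $\mathbb C^2\to\mathbb C^2/(\mathbb Z/2)$, hence smooth, while over the $A_3$-singularity it is $\mathbb C^2/(\mathbb Z/2)\to\mathbb C^2/(\mathbb Z/4)$, hence itself of type $A_1$. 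Combined with Theorem~\ref{thm:A3-singularity-quintic}, which asserts that $S_R$ has exactly $14$ nodes and a single $A_3$-singularity (the latter corresponding to the unique type $\II$ line $L$ with $\wh\psi(L,\langle L,R\rangle)=R$), one concludes that $F_R$ is smooth over each of the $14$ nodes and carries exactly one $A_1$-singularity over the $A_3$ point, located precisely at $L$.

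Smoothness of $F_R$ at the point $R$ itself is a separate check: it holds on a dense open subset of $F$ by Voisin's lemma, and a general $R$ in the branch divisor of $\wh\psi$ can be assumed to lie in this open subset. The principal obstacle is the local analytic identification of $F_R$ with the natural double cover near the $A_3$-singularity; this uses the normality hypothesis together with the uniqueness of the index-two subgroup of the cyclic local fundamental group $\mathbb Z/4$.
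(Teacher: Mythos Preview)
Your proposal is correct and follows essentially the same route as the paper's proof: both construct the degree-two map $F_R\to S_R$ induced by projection from $R$, observe that its branch locus is exactly $\operatorname{Sing}(S_R)$, invoke Theorem~\ref{thm:A3-singularity-quintic}, and then use normality of $F_R$ to pin down the local structure of the cover over each Du~Val point. The only cosmetic differences are that the paper extends the map to the point $R\in F_R$ by sending $R\mapsto\pi_R(\Lambda_R)$ (using that $R$ is of type~I) rather than treating $R$ separately, and that the paper appeals to a uniqueness statement for quasi-\'etale covers (\cite[Proposition~3.13]{GKP}) where you argue directly with the local fundamental groups $\mathbb Z/(n{+}1)$ of $A_n$; these are equivalent formulations of the same fact.
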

\begin{proof}
    Let $\pi_R\colon Y \dashrightarrow \P^3$ be the projection from $R$. Then there is a map 
    \[f\colon F_R \dashrightarrow S_R,\quad L \mapsto \pi_R(L).\]
    The map clearly extends to the point $R$ by mapping $R$ to $\pi_R(\Lambda_R)$, when the line $R$ is of type $I$. The map $f$ is 2-to-1 and can be regarded as the quotient of an involution, which, in case $R$ is general, has 16 fixed points mapped to the 16 singular points of $S_R$.
    If $R$ is as in the hypothesis, then the fixed points of this involution are in bijection with the singularities of $S_R$: 14 singularities of type $A_1$ and one singularity of type $A_3$ corresponding to
    the unique line $L$  of type $\II$ with residual $R$ by Theorem~\ref{thm:A3-singularity-quintic}.
    
    As $F_R$ is normal, the morphism $f$ around each singular point is the unique morphism characterized by \cite[Proposition~3.13]{GKP}. Locally around the $A_1$ singular point it is isomorphic to the quotient presentation  $\mathbb C^2\to \mathbb C^2/\mu_2$ of the $A_1$ singularity, and around the $A_3$ singular point to the presentation $\mathbb C^2/\mu_2\to\mathbb C^2/\mu_4$ of the $A_3$ singularity as quotient of the $A_1$ singularity.
\end{proof}

\section{On a lagrangian submanifold of $Z$ and singular cubic surfaces}\label{sec:variety-P}

In \S\ref{SubSec:Z}, we recall the construction of the hyperkähler variety 
$Z=Z(Y)$ associated with any smooth cubic fourfold 
$Y$ not containing a plane, often referred to as the LLSvS variety, together with some of its basic properties.
It carries the action of an antisymplectic involution which has been studied in \cite{antisymplectic, FMSOG-II} whose fixed locus consists of two connected components, one isomorphic to the cubic fourfold, which we hence denote $Y$, and a second one which we denote $W$. The goal of this section is to study the less understood variety $W$. In order to do that we introduce in \S\ref{SubSec:P} we define a subvariety $P$ of $F\times F$ which is related to $W$ via a rational map $\varphi: F\times F\dashrightarrow Z$ introduced by Voisin which we recall in \S\ref{SubSec:Z}.
A key tool in our strategy is to reduce statements to the study of cubic surfaces $S$ which are given by intersecting the cubic $Y$ with a projective space $\mathbb P^3$ and consider lines and twisted cubics on $S$. We do so in \S\ref{SubSec:Srfs}.
This analysis provides the key tools to prove the main result of the section, Theorem~\ref{thm: P->W}.

\subsection{The LLSvS variety $Z$}\label{SubSec:Z}
Given a smooth cubic fourfold $Y$ not containing any plane, Lehn, Lehn, Sorger and van Straten constructed in \cite{LLSvS} a hyperk\"ahler variety $Z$ of dimension 8, which parametrises families of twisted cubics on $Y$ and their flat degenerations. The LLSvS manifold is deformation equivalent to the Hilbert scheme of 4 points on a K3 surface \cite{AL,Lehn}.

As any twisted cubic has $3t+1$ as its Hilbert polynomial, the starting point is to consider the compactification $\Hilb^{gtc}(Y)$ of the space of twisted cubics inside the Hilbert scheme $\Hilb^{3t+1}(Y)$. Curves parametrised by this variety are called \textit{generalised twisted cubics}. The variety $\Hilb^{gtc}(Y)$ is smooth, of dimension 10 and admits a morphism to the Grassmannian $G:=\Gr(4,6)$, which maps any generalised twisted cubic to its linear span \cite[Theorem~A]{LLSvS}. 
This morphism factors through a $\P^2$-fibration onto a variety $Z'$ followed by a generically finite morphism $a'\colon Z'\to G$.
The hyperk\"ahler projective manifold $Z$ is obtained by $Z'$ by contracting the divisor parametrising linear systems of curves which are not arithmetically Cohen-Macaulay. The image of the divisor under the contraction is a Lagrangian subvariety isomorphic to the cubic fourfold $Y$. The variety $Z$ comes with a generically finite rational map $a\colon Z \dashrightarrow G$ and thus a natural polarisation $\mathcal{L}$ induced by the Pl\"ucker polarisation of the Grassmannian $G$ \cite[Theorem~B]{LLSvS}.
The family parametrised by cubic fourfolds form a projective locally complete family of polarised hyperk\"ahler manifolds with polarisation of degree 2 and divisibility 2 \cite{AG, LPZ}.

The hyperk\"ahler manifold $Z$ has a biregular antisymplectic involution $\tau\colon Z \to Z$ as pointed out in \cite{Lehn-oberwolfach}. It can be defined as follows: Let $C\subset Y$ be a generalised twisted cubic and let $Q$ be any quadric (surface) in the linear span $\langle C \rangle$, which contains $C$. The intersection $Q\cap Y$ is a sextic curve consisting of $C$, and the residual generalised twisted cubic $C'$. The involution $\tau$ is defined by  mapping $C$ to $C'$.
Antisymplectic involutions of K3$^{[n]}$-type manifolds and this particular example have been studied in \cite{antisymplectic, FMSOG-II}.

\begin{theorem}[{\cite[Theorem 1.4]{FMSOG-II}}]
The fixed locus of the antisymplectic involution has two connected components, one of which is the lagrangian subvariety $Y$.
The other component $W$ is a smooth fourfold of general type, in particular $\omega_{W} = 3 \mathcal{L}|_{W}$, where $\mathcal L$ is the primitive polarisation on $Z$.
\end{theorem}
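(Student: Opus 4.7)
The plan is to establish the theorem in three stages: smoothness and the Lagrangian property of $\mathrm{Fix}(\tau)$, identification of the LLSvS Lagrangian $Y$ as one connected component, and the description of the remaining component $W$ together with its canonical class.

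First, since $\tau$ is a regular involution of a smooth projective variety, $\mathrm{Fix}(\tau)$ is smooth by Cartan's linearisation lemma. At any fixed point $z$, the differential $d\tau_z$ is an involution of $T_zZ$ and $T_z\mathrm{Fix}(\tau)$ coincides with its $(+1)$-eigenspace. The antisymplectic condition $\tau^*\sigma=-\sigma$ forces $\sigma(v,w)=-\sigma(v,w)$ for $v,w$ in this eigenspace, so $\sigma$ vanishes there; the analogous argument on the $(-1)$-eigenspace, together with a dimension count, shows both eigenspaces are Lagrangian. Hence every component of $\mathrm{Fix}(\tau)$ is a smooth Lagrangian subvariety of dimension $4$.

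Second, I would verify that $Y\subset Z$ lies pointwise in $\mathrm{Fix}(\tau)$. A point of $Y$ is the image under the contraction $Z'\to Z$ of a $\mathbb P^2$-fibre parametrising non-aCM generalised twisted cubics within a single linear system; residuating such a cubic $C$ by any quadric $Q\subset\langle C\rangle$ produces a cubic $C'$ in the same linear system, hence in the same fibre, so $[C']=[C]$ in $Z$. Since $Y$ is a $4$-dimensional smooth subvariety and $\mathrm{Fix}(\tau)$ is of pure dimension $4$, $Y$ is a full connected component, and I set $W:=\overline{\mathrm{Fix}(\tau)\setminus Y}$.

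Third, to analyse $W$, I would describe its points concretely: a class $[C]\in W$ corresponds to an aCM generalised twisted cubic with $[\tau(C)]=[C]$, equivalently to a pair $(S,C)$ where $S=Y\cap\mathbb P^3$ is a cubic surface and $C\subset S$ is a generalised twisted cubic satisfying $2C\sim Q|_S$ for some quadric $Q\subset\mathbb P^3$. This realises $W$ as a cover of a locus in $\Gr(4,6)$ via $[C]\mapsto\langle C\rangle$, and connectedness can be established by deforming such pairs $(S,C)$ within this family. The relation $\omega_W=3\mathcal L|_W$ should then follow by combining Lagrangian adjunction, which gives $N_{W/Z}\cong\Omega^1_W$ and hence $\det N_{W/Z}\cong \omega_W$, with a Plücker-type computation for $a|_W\colon W\to \Gr(4,6)$, using $\omega_S=\sO_S(-1)$ and the linear equivalence $2C\sim Q|_S$ on the smooth cubic surfaces in the family.

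The main obstacle will be this last canonical-bundle computation. The identification of the two components is essentially a direct geometric check once the involution is set up correctly, but Lagrangian adjunction alone is tautological for a Lagrangian subvariety; extracting the specific coefficient $3$ in $\omega_W=3\mathcal L|_W$ requires carefully tracking the Plücker pullback through the LLSvS construction (remembering that $\mathcal L$ has divisibility $2$ in $\mathrm{Pic}(Z)$) and combining this with intersection numbers on smooth cubic surfaces governing such half-quadric twisted cubics. The proof must therefore pass through the geometry of the ambient projective embedding to recover the required scalar.
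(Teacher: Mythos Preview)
This theorem is not proved in the paper; it is quoted verbatim from the literature with the citation \cite[Theorem 1.4]{FMSOG-II} and used as a black box. There is therefore no ``paper's own proof'' to compare your proposal against.

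As for the content of your sketch: the first two stages (smoothness and the Lagrangian property via Cartan linearisation, and the identification of the LLSvS Lagrangian $Y$ as a full component) are standard and essentially correct. The third stage, however, is where the actual work lies, and your proposal does not carry it out. Saying that ``connectedness can be established by deforming such pairs $(S,C)$'' is not a proof of connectedness; one needs either an explicit irreducibility argument for the relevant parameter space or, as in \cite{FMSOG-II}, the full lattice-theoretic classification of antisymplectic involutions on manifolds of $\mathrm{K3}^{[4]}$-type together with topological constraints on their fixed loci. Likewise, your plan for $\omega_W = 3\mathcal L|_W$ invokes Lagrangian adjunction and a ``Pl\"ucker-type computation'', but Lagrangian adjunction only gives $\omega_W \cong \det N_{W/Z}$, which is tautological here, and you have not indicated how the Pl\"ucker pullback actually produces the coefficient $3$; the argument in \cite{FMSOG-II} proceeds instead through equivariant cohomology and the known Chern numbers of $Z$. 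So while your outline captures the shape of the statement, the substantive parts (connectedness of $W$ and the canonical-class formula) remain genuine gaps rather than details to be filled in.
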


Voisin constructed a degree 6 rational map $\varphi\colon F\times F \dashrightarrow Z$ in the following way \cite[Theorem~4.8]{Voisin-map-varphi}. 
Given a general pair of skew lines $(L_1, L_2)$ and a point $p\in L_1$, then the union of $L_1$ and the conic residual to $L_2$ in the intersection $\langle L_2,\ p\rangle \cap Y$ , is a generalised twisted cubic $C$. The map $\varphi$ is defined by the association $(L_1, L_2)\mapsto [C]$. The indeterminacy locus, the resolution of it and the branch divisor of its resolution have been studied in \cite{Muratore, chen, mio}.
 
The involution $\tau$ fits in the following commutative diagram:
\[\xymatrix{
F\x F \ar[d]^\sigma \ar@{-->}[r]^{\varphi} & Z\ar[d]^\tau\\
F\x F \ar@{-->}[r]^{\varphi} & Z,
}
\]
where $\sigma\colon (L_1,L_2)\mapsto (L_2,L_1)$ switches the factors.

\subsection{A variety dominating $W$}\label{SubSec:P}

We denote by $P$ the closure in $F\times F$ of 
\[
\{ (L_1,L_2)\in F\times F : L_i \text{ are of type I, $L_1\not = L_2$ and } \psi(L_1)=\psi(L_2) \}.
\]
The variety $P$ turns out to have a central r\^ole in our study: The following intriguing lemma is the starting point which allows us to relate properties of $\psi$ and $\phi$. 
\begin{lemma}\label{lem: fundamental}
 Let $(L_1, L_2)\in P$ be a general pair of lines, satisfying the conditions
\begin{enumerate}
    \item $L_1$ and $L_2$ are skew,
    \item $L_1$ and $L_2$ are both of type I, 
    \item and $\psi(L_1) = \psi(L_2)$.
\end{enumerate}

    Then the linear span of $L_1$ and $L_2$ intersects $Y$ in a cubic surface with 4 singular points of type $A_1$.
\end{lemma}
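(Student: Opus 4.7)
The strategy is to identify the residual line $R:=\psi(L_1)=\psi(L_2)$ inside the $3$-plane $\Sigma:=\langle L_1,L_2\rangle$, reducing the lemma to the explicit study of a cubic surface in a prescribed incidence configuration.

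The key starting observation is that the planar cubic $\Lambda_{L_i}\cap Y=2L_i+R$ forces the two lines $L_i$ and $R$ to meet in a point (they are two distinct lines in the projective plane $\Lambda_{L_i}$). Hence $R$ meets both $L_1$ and $L_2$, lies in $\Sigma$, and the planes $\Pi_i:=\Lambda_{L_i}$ both equal $\langle L_i,R\rangle\subset\Sigma$. Thus $S:=\Sigma\cap Y$ is a cubic surface containing the three lines $L_1,L_2,R$, with the strong contact conditions $\Pi_i\cdot S=2L_i+R$ on each $\Pi_i$. I would then normalize by choosing coordinates $(X_0:X_1:X_2:X_3)$ on $\Sigma$ with
\[
L_1=\{X_0=X_1=0\},\ L_2=\{X_2=X_3=0\},\ R=\{X_1=X_3=0\},\ \Pi_1=\{X_1=0\},\ \Pi_2=\{X_3=0\}.
\]
Imposing the three line containments together with the two contact conditions, the defining cubic $f$ of $S$ reduces after rescaling to the normal form
\[
f=X_0^2X_3+X_1X_2^2+X_1X_3\,\gamma,
\]
where $\gamma=aX_0+bX_1+cX_2+dX_3$ is an arbitrary linear form encoding the remaining geometric freedom.

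The count of singularities is then a direct inspection of $\nabla f=0$. The factorizations $f_{X_0}=X_3(2X_0+aX_1)$ and $f_{X_2}=X_1(2X_2+cX_3)$ split the singular locus into four cases; three of them produce no singular point (in particular none lies on $R$ or off $L_1\cup L_2$), while the remaining one yields exactly two points on $L_2$ cut out by $X_0^2+aX_0X_1+bX_1^2=0$ and two points on $L_1$ cut out by $X_2^2+cX_2X_3+dX_3^2=0$. A Hessian computation at any of the four points shows that the quadratic part has rank $3$ precisely when the corresponding binary quadratic has simple roots, which gives the $A_1$ conclusion.

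The main delicate point is the genericity statement, namely that on a dense open subset of $P$ both discriminants $a^2-4b$ and $c^2-4d$ are non-zero. Their vanishing corresponds geometrically to two of the four nodes on some $L_i$ colliding into an $A_3$ singularity, in the same spirit as the degeneration studied in \S\ref{sec:ramification} for lines of type $\II$; I expect this locus to be a proper Zariski closed subvariety of $P$ (cut out by two explicit polynomial conditions on the coefficients of a Clemens--Griffiths-type normal form for $Y$ along $R$), so that a general pair $(L_1,L_2)\in P$ avoids it and $S$ indeed acquires four distinct ordinary double points.
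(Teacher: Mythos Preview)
Your argument is correct and follows a genuinely different route from the paper. You pass to explicit coordinates on $\Sigma$, derive the normal form $f=X_0^2X_3+X_1X_2^2+X_1X_3\gamma$, and read off the singular locus by hand. The paper instead argues via the Gauss map: the pencil $T\subset\P(V^\vee)$ of hyperplanes containing $\Sigma$ and the smooth conic $\scrG(L_i)$ both lie in the plane $\langle L_i,R\rangle^\vee\subset\P(V^\vee)$ (since $\Lambda_{L_i}=\langle L_i,R\rangle\subset\Sigma$), so $T$ meets $\scrG(L_i)$ in two points; these are precisely the points $P\in L_i$ with $\Sigma\subset T_PY$, i.e.\ singular points of $S$ on $L_i$. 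Having produced four singular points on the pair of skew lines, the paper then invokes Dolgachev's classification (\cite[Corollary~9.2.3]{CAG}) to conclude that $S$ is a Cayley cubic, in place of your Hessian check.

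Your computation is self-contained and makes the $A_1$ verification explicit; the paper's argument is shorter and explains conceptually why the type-I hypothesis enters (it is exactly what makes $\scrG(L_i)$ a smooth conic rather than a line, so that $T$ meets it in two points). One small slip in your write-up: in the four-way case split coming from the factorizations of $f_{X_0}$ and $f_{X_2}$, \emph{two} of the cases are empty (the case $X_1=X_3=0$ lying on $R$, and the off-line case $2X_0+aX_1=2X_2+cX_3=0$), not three; the two remaining cases produce the pairs of nodes on $L_2$ and on $L_1$ respectively. This does not affect the conclusion. The genericity you flag---nonvanishing of $a^2-4b$ and $c^2-4d$---translates in the paper's language into $T$ being transverse to the conic $\scrG(L_i)$, and both proofs handle it the same way, as an open condition on $P$.
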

Cubic surfaces with 4 singularities of type $A_1$ are called Cayley cubics and we analyse them in \S\ref{SubSec:Srfs}.
\begin{proof}
We claim that the linear span $\Lambda:=\langle L_1, L_2\rangle$ intersects $Y$ in a cubic surface with two singular points on $L_1$. We denote by $R$ the residual line $\psi(L_1)$, then $R$ and $L_1$ are coplanar and we consider the line $T$ in $\P(V^*)$ of hyperplanes which contain $\Lambda$.

As both $T$ and $\scrG (L_1)$ lie in the 2 dimensional linear space $\langle L_1, R \rangle^* \subset \P(V^*)$, the line $T$ intersects the smooth conic $\scrG (L_1)$ in two points, which correspond to the image under the Gauss map of two singular points of $S:=\Lambda\cap Y$ lying on the line $L_1$.

Repeating the argument for $L_2$ we find two singular points of $S$ on $L_2$ and we deduce that the cubic surface $S$ is Cayley by \cite[Corollary~9.2.3]{CAG}.
\end{proof}
Following the construction of the Voisin map, one gets the following
\begin{lemma}\label{fundamental lemma}
Let $L_1$ and $L_2$ be two lines that satisfy the general conditions:
\begin{enumerate}\label{general-conditions}
    \item $L_1$ and $L_2$ are of type I,
    \item $L_1$ and $L_2$ are skew.
\end{enumerate}
If $\psi(L_1)= \psi(L_2)=:R$, then $\varphi(L_1, L_2) = \varphi(L_2, L_1)$.
\end{lemma}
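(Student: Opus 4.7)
The plan is to reduce the assertion to a linear equivalence of generalised twisted cubics on the cubic surface $S := \Lambda \cap Y$, where $\Lambda := \langle L_1, L_2\rangle$, by exploiting the fact that, under the hypothesis $\psi(L_1) = \psi(L_2) = R$, the residual line $R$ is forced to lie in $\Lambda$.

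First I would observe that $R \subset \Lambda$. By definition of $\psi$ the plane $\Lambda_{L_i}$ contains both $L_i$ and $R$, so $R$ meets $L_i$ in a point; set $p_0 := L_1 \cap R$ and $q_0 := L_2 \cap R$. Since $L_1, L_2$ are skew we have $p_0 \neq q_0$, so $R = \overline{p_0 q_0} \subset \Lambda$, and consequently $\Lambda_{L_1}, \Lambda_{L_2} \subset \Lambda$. Therefore, cutting $S$ (a Cayley cubic by Lemma~\ref{lem: fundamental}) with the two hyperplanes $\Lambda_{L_i}$ of $\Lambda$ yields, as Weil divisors on $S$,
\[ 2L_1 + R \;\sim\; H \;\sim\; 2L_2 + R, \]
where $H$ is the hyperplane class of $S\subset\Lambda$; in particular $2L_1\sim 2L_2$ on $S$.

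Next I would unwind the construction of $\varphi$: for generic $p \in L_1$ the plane $\langle L_2, p\rangle \subset \Lambda$ cuts $S$ in $L_2 + Q_p$, so $Q_p \sim H - L_2$ and the generalised twisted cubic $C_1 := L_1 \cup Q_p$ representing $\varphi(L_1, L_2)$ has class $H + L_1 - L_2$. Symmetrically, for generic $q \in L_2$, the cubic $C_2 := L_2 \cup Q'_q$ representing $\varphi(L_2, L_1)$ has class $H + L_2 - L_1$. Combining with the previous step gives
\[ C_1 - C_2 \;\sim\; 2(L_1 - L_2) \;\sim\; 0 \]
on $S$, so $C_1$ and $C_2$ are linearly equivalent Weil divisors with the same linear span $\Lambda$; hence they lie in the same fiber of the $\P^2$-fibration $\Hilb^{\mathrm{gtc}}(Y) \to Z'$ recalled in \S\ref{SubSec:Z}, and define the same point of $Z$.

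The delicate step is the last one: since $S$ is singular, one has to verify that linear equivalence of Weil divisors really does translate into equality of the corresponding points of $Z$. As a backup I would exhibit the reducible quadric $Q := \langle L_2, p\rangle \cup \langle L_1, q\rangle \subset \Lambda$ (its two planes meet along $\overline{pq}$, and both $C_1, C_2$ are contained in $Q$), which cuts $S$ in $C_1 \cup C_2$; the definition of the antisymplectic involution then gives $\tau([C_1]) = [C_2]$, and combining this with the linear equivalence $C_1 \sim C_2$ shown above and the commutative square $\varphi \circ \sigma = \tau \circ \varphi$ of \S\ref{SubSec:Z} forces $\varphi(L_1, L_2) = \varphi(L_2, L_1)$.
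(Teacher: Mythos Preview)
Your approach is more elaborate than the paper's, and its central step is left as a promissory note. The paper sidesteps the Weil--divisor bookkeeping entirely by making a clever choice of auxiliary point: it takes $p = p_1 := L_1 \cap R$ in the definition of $\varphi(L_1,L_2)$. For this point the plane $\langle p_1, L_2\rangle$ is exactly $\Lambda_{L_2}$, so the residual conic is $L_2 \cup R$ and the representing curve is $L_1 \cup L_2 \cup R$. The symmetric computation for $\varphi(L_2,L_1)$ using $q_0 := L_2 \cap R$ produces the \emph{same} curve, so the equality in $Z$ is immediate; the only care needed is checking that the flat limit as $p \to p_1$ agrees with this naive calculation.

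Your linear--equivalence argument is not wrong, but the step you yourself flag as ``delicate'' genuinely requires work: you must show that on a Cayley cubic two generalised twisted cubics that are linearly equivalent as Weil divisors lie in the same $\P^2$-family, i.e.\ that the natural map $\Gamma/W(\Gamma_0) \to \Gamma/\mathbb{Z}\Gamma_0$ is injective for $\Gamma_0$ of type $4A_1$. This is true (a short lattice computation using that the four effective roots are pairwise orthogonal shows any root in a fixed $\mathbb{Z}\Gamma_0$-coset lies in a single $W(\Gamma_0)$-orbit), but it is not automatic and you do not supply it.

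Your backup does not close the gap. Exhibiting the reducible quadric $Q$ only re-derives $\tau([C_1]) = [C_2]$, which is already the content of the commutative square $\tau\circ\varphi = \varphi\circ\sigma$; combining this with $C_1 \sim C_2$ still does not force $[C_1] = [C_2]$, since a priori $[C_1]$ and $[C_2]$ could be two distinct $\tau$-conjugate families with the same Weil class. So as written the proof is incomplete, whereas the paper's choice $p = L_1 \cap R$ gives the result in one line.
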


\begin{proof}
By hypothesis we have
$\Lambda_{L_1}\cap Y = 2L_1 + R$ and $
\Lambda_{L_2}\cap Y = 2L_2 + R$.
Since \(L_1\) and \(L_2\) are skew, each meets \(R\) in a single point, say 
\(L_i \cap R = \{p_i\}\) for \(i=1,2\).
We compute \(\varphi(L_1,L_2)\) using the point \(p_1\), as described in
\S\ref{SubSec:Z}.
Because the construction there applies to a general point of \(L_1\),
we must extend it to this special point \(p_1\).

The plane \(\langle p_1, L_2\rangle\) coincides with \(\Lambda_{L_2}\),
hence its intersection with \(Y\) is the cubic curve \(2L_2 + R\). The union \(L_2 \cup R\) is the  conic residual to \(L_2\) in this cubic:
This agrees with the definition of residual conic obtained by taking the flat limit.
Indeed, set \(S := Y \cap \langle L_1, L_2\rangle\), and consider the morphism $
L_1 \rightarrow \mathrm{Hilb}^{2t+1}(S)$ mapping a point $P$ of $L_1$ to the conic in $\langle P, L_2\rangle \cap S$ residual to $L_2$.
Then the limiting residual conic corresponding to \(p_1\) is precisely \(L_2 \cup R\).
Finally, one checks that \(L_1 \cup L_2 \cup R\) is a point in \(\Hilb^{3t+1}(S)\), so that
$\varphi(L_1,L_2) = L_1 \cup R \cup L_2$.
The analogous computation for \(\varphi(L_2,L_1)\) yields the same conclusion,
proving the claim.
\end{proof}

We report the following theorem which is a key result for our study, but we defer its proof to later, because it relies on some observations concerning Cayley cubics.
We recall that from \S\ref{SubSec:Z} the variety $Z'$ admits a morphism $a'$ onto the Grassmannian $G=\Gr(4,6)$, which descends to a rational map $a\colon Z\dashrightarrow G$ whose locus of indeterminacy is the connected component of the fixed locus of the involution $\tau$ isomorphic to $Y$.

\begin{theorem}\label{thm: P->W}
The Voisin map $\varphi\colon F\times F \dashrightarrow Z$ restricts to a dominant rational map $\varphi|_P\colon P\dashrightarrow W$. 
    
    The regular map $a|_{W}\colon W\to G$ is birational onto the image, which consists of the projective spaces that intersect $Y$ in a Cayley cubic or a degeneration of it.
\end{theorem}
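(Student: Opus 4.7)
The plan splits into two parts: first showing that $\varphi|_P$ factors through $W$ as a dominant rational map, and then establishing the birationality of $a|_W$ together with the description of its image. First I would note that Lemma~\ref{fundamental lemma} gives $\varphi(L_1,L_2)=\varphi(L_2,L_1)$ for a generic pair $(L_1,L_2)\in P$; combined with the equivariance of $\varphi$ with respect to the involutions $\sigma$ and $\tau$ recalled in \S\ref{SubSec:Z}, this forces $\varphi(L_1,L_2)$ to lie in the fixed locus $Y\sqcup W$ of $\tau$. To rule out the component $Y$, I would use that the construction produces the reducible generalised twisted cubic $L_1\cup L_2\cup R$, with $R=\psi(L_1)=\psi(L_2)$, whose linear span is the three-dimensional projective space $\Lambda:=\langle L_1,L_2\rangle$. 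Since this span is well defined, the rational map $a\colon Z\dashrightarrow G$ is regular at $\varphi(L_1,L_2)$; as the indeterminacy locus of $a$ is exactly $Y\subset Z$, we conclude $\varphi(L_1,L_2)\in W$.

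For dominance of $\varphi|_P\colon P\dashrightarrow W$, I would check that $\dim P=4=\dim W$: the former because $\psi\colon F\dashrightarrow F$ is finite of degree $16$, so the generic fibre of the first projection $P\to F$ consists of $15$ points, and the latter because $W$ is Lagrangian in the eight-dimensional variety $Z$. To complete the argument it suffices to exhibit finite generic fibres, and indeed the composition $a\circ\varphi|_P$ sends $(L_1,L_2)$ to $\Lambda=\langle L_1,L_2\rangle\in G$, whose generic fibre is contained in the finite set of ordered pairs of skew type~I lines on the Cayley cubic $\Lambda\cap Y$ provided by Lemma~\ref{lem: fundamental}.

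For the second part, combining the previous paragraphs with Lemma~\ref{lem: fundamental} already identifies $a(W)$ with the closure of the locus of $\mathbb P^3$'s whose intersection with $Y$ is a Cayley cubic or a degeneration thereof. To obtain birationality of $a|_W$, I would analyse the fibre of $a|_W$ over a generic such $\Lambda$: it consists of $\tau$-fixed equivalence classes in $Z\setminus Y$ of generalised twisted cubics supported on the Cayley cubic $S:=\Lambda\cap Y$, and I would show that the only such class is the one containing the triangle $L_1\cup L_2\cup R$ produced from a pair in $P$. The hardest step will be this uniqueness claim: it requires a careful enumeration of the lines on a generic Cayley cubic, a determination of their types on $Y$, and a controlled description of the residual-in-a-quadric involution on the ACM twisted cubic classes in this singular setting, in order to rule out any further $\tau$-fixed class outside $Y$.
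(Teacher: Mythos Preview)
Your proposal is correct and follows the same broad outline as the paper, but differs in two places. For showing $\varphi(P)\subset W$, you argue abstractly via $\tau\circ\varphi=\varphi\circ\sigma$ together with Lemma~\ref{fundamental lemma}, then rule out the component $Y$ using the indeterminacy locus of $a$; the paper instead uses Lemma~\ref{lem: fundamental} to identify a general $(L_1,L_2)\in P$ with a pair of opposite edges of a Cayley cubic and then Lemma~\ref{lem: opposite edges} to compute $\varphi(L_1,L_2)$ explicitly as the distinguished class $K$, which is visibly aCM and $\tau$-fixed. Your route is arguably cleaner here. For the birationality of $a|_W$, however, the paper's route is much shorter: you propose a geometric enumeration of lines and a direct analysis of the residual-in-a-quadric involution (your remark about determining the types of the lines on $Y$ is a red herring, since the $\tau$-action on $Z'_{aCM}(S)$ is intrinsic to $S$), whereas the paper's Lemma~\ref{lem: 4A_1 LLSvS} settles the uniqueness in one stroke via the root-theoretic description of $\tau$ as $\alpha\mapsto-\alpha$ on $(\Gamma\setminus\Gamma_0)/W(\Gamma_0)$, reducing the question to a count of $W(\Gamma_0)$-orbits stable under negation.
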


In the following we analyse the maps $P\dashrightarrow W \to G$ in greater detail. In order to do so, we describe singular cubic surfaces focussing on lines on them.

\subsection{Cayley cubics and their degenerations}\label{SubSec:Srfs}
The stratification of the space of cubic surfaces given by the type of singularities has been studied by Bruce in \cite{bruce}. Here we are interested only in cubic surfaces with exactly 4 singular points of type $A_1$, called Cayley cubics, and their degenerations. From his analysis degenerations of Cayley cubics can be summed up by the following diagram:
\begin{align}\label{degeneration-surfaces}
4A_1\ \succeq\ 2A_1A_3\ \succeq\ A_1A_5\ \succeq\ \wt E_6\ \succeq X_6\ \succeq X_7\ \succeq X_8\ \succeq\ X_9
\end{align}
where $\wt E_6$ denotes the cubics with one simple elliptic singularity and $X_i$ are non-normal cubic surfaces. We say that a cubic surface $S$ is of type $2A_1 A_3$, if $S$ has exactly 2 singularities of type $A_1$ and 1 singularity of type $A_3$ and analogously for the other singularities. 

In the following we focus on lines and twisted cubics on singular cubic surfaces to study the maps $\varphi$ and $\psi$. 
We notice that there is not an analogue of the map $\psi$ for the variety of lines on a cubic surface $S$ because a line $L$ need not
have a plane tangent at every point. However, if $\P^2\simeq \pi\subset \langle S\rangle$ is tangent at every point of $L\subset S$ with $\pi \cap S = 2L + R$ for a line $R$, we say that the line $R$ is \textit{residual}, and that $L$ is \textit{triple} if $R=L$.
Generalised twisted cubics on singular cubic surfaces have already been carefully analysed in \cite[\S~2]{LLSvS}, which we closely follow.
Every cubic surface $S$ with at most rational double points has a minimal resolution $\wt S\to S$ with $\wt S$ isomorphic to the blow-up of $\mathbb P^2$ in six points.
The orthogonal $K_{\wt S}^{\perp}\subset H^2(\wt S, \mathbb Z)$ is isomorphic to the negative definite root lattice $E_6$. We denote by $\Gamma$ the roots in $K_{\wt S}^{\perp}$ and by $\Gamma_0$ the subset of effective roots. The latters correspond to irreducible components of exceptional curves for the resolution. We also denote by $W(\Gamma_0)$ the subgroup of the Weyl group of $E_6$ generated by reflections in the effective roots $\Gamma_0$.
The moduli space of generalised twisted cubics on $S$ with reduced structure is then given by a union of projective planes \cite[Theorem~2.1]{LLSvS}:
\begin{align}\label{Hilb-S}
\mathrm{Hilb}^{gtc}(S)_{red} = (\Gamma/W(\Gamma_0)) \times \P^2.
\end{align}
In particular, linear systems of generalised twisted cubics on $S$ are in bijection with the orbits of $\Gamma$ under the action of $W(\Gamma_0)$. Along this bijection families of arithmetically Cohen-Macaulay (aCM) curves correspond to orbits containing an effective root.

Going again through the construction from \cite{LLSvS} of $Z$, let $Y$ be a cubic fourfold not containing a plane. Mapping a generalised twisted cubic to its linear span defines a morphism from $\mathrm{Hilb}^{gtc}(Y)$ to the Grassmannian $G=\mathrm{Gr}(4,6)$ which factors through a $\mathbb P^2$-fibration followed by a generically finite morphism $a':Z'\to G$. Let $C$ be a twisted cubic and $E$ be its linear span, the intersection $S:=Y\cap E$ is a cubic surface.
If $S$ has ADE singularities, the preimage $Z'(S):=(a')^{-1}(E)$ is in bijection with the set $\Gamma/W(\Gamma_0)$ in Equation~\eqref{Hilb-S} and the points $Z'_{aCM}(S)$ corresponding to aCM curves  are in bijection with $(\Gamma\setminus \Gamma_0)/W(\Gamma_0)$. By its definition the involution $\tau$ restricts to an action on $Z'_{aCM}(S)$, and is given by $\alpha \mapsto -\alpha$ for $\alpha \in \Gamma$, see \cite{Lehn-oberwolfach}.

We defer to Appendix~\ref{appendix:cubic-surfaces} details about these singular surfaces, and recall here only the main facts we need to analyse the Voisin maps.

We also introduce a piece of notation. We denote by $G_{4A1}$ the closure in the Grassmanian $G$ of 
\[
\{ E\in G: E\cap Y \mbox{ is a  singular cubic surface of type }4A_1 \}.
\]
 Analogously, we define the loci $G_{2A_1A_3}$, $G_{A_1A_5}$, $G_{\wt E_6}$, and for the non-normal surfaces $G_{X_6}$, $G_{X_7}$, $G_{X_8}$, $G_{X_9}$.
\bigskip

\subsubsection*{Singular cubic surfaces of type $4A_1$}
Any Cayley cubic, i.e. a cubic surface with exactly 4 $A_1$ singular points, contains exactly 9 lines whose schematic representation resembles a tetrahedron with vertices at the singular points. We name \textit{edges} the 6 of them connecting 2 singular points. In particular, if $P_0, P_1, P_2, P_3$ are the singular points, we denote $E_{ij}$ the line containing $P_i$ and $P_j$. Each of the other 3 lines intersect exactly 2 opposite edges, we denote $J_{ij,kl}$ the line joining the edges $E_{ij}$ and $E_{kl}$ for $\{i,j,k,l\} = \{0,1,2,3\}$. See Figure~\ref{fig:4A1}.

\begin{figure}[h!]
    \centering
    \captionsetup[subfigure]{justification=centering}
    \begin{subfigure}[t]{0.45\textwidth}
        \centering
\begin{tikzpicture}[x=0.75pt,y=0.75pt,yscale=-1,xscale=1]

\draw   (313,38.47) -- (450.5,262.04) -- (175.5,262.04) -- cycle ;
\draw    (313,38.47) -- (317.03,178.12) ;
\draw    (317.03,178.12) -- (175.5,262.04) ;
\draw    (317.03,178.12) -- (450.5,262.04) ;
\draw  [draw opacity=0] (269.15,197.77) .. controls (268.26,193.46) and (267.8,189.03) .. (267.8,184.5) .. controls (267.8,155.17) and (287.19,129.69) .. (315.66,116.85) -- (357.65,184.5) -- cycle ; \draw  [color={rgb, 255:red, 208; green, 2; blue, 27 }  ,draw opacity=1 ] (269.15,197.77) .. controls (268.26,193.46) and (267.8,189.03) .. (267.8,184.5) .. controls (267.8,155.17) and (287.19,129.69) .. (315.66,116.85) ;  
\draw  [draw opacity=0] (356.5,262) .. controls (356.5,262) and (356.5,262) .. (356.5,262) .. controls (317.84,262) and (285.01,240.33) .. (273.19,210.21) -- (356.5,185.5) -- cycle ; \draw  [color={rgb, 255:red, 208; green, 2; blue, 27 }  ,draw opacity=1 ] (356.5,262) .. controls (356.5,262) and (356.5,262) .. (356.5,262) .. controls (317.84,262) and (285.01,240.33) .. (273.19,210.21) ;  
\draw  [draw opacity=0] (361.11,117.19) .. controls (371.99,142.82) and (368.74,171.71) .. (353.78,193.75) -- (287.72,147.9) -- cycle ; \draw  [color={rgb, 255:red, 5; green, 26; blue, 245 }  ,draw opacity=1 ] (361.11,117.19) .. controls (371.99,142.82) and (368.74,171.71) .. (353.78,193.75) ;  
\draw  [draw opacity=0] (347.39,202.4) .. controls (342.83,207.39) and (337.52,211.81) .. (331.48,215.48) .. controls (307.03,230.36) and (277.09,229.15) .. (253.14,214.97) -- (289.6,146.64) -- cycle ; \draw  [color={rgb, 255:red, 5; green, 26; blue, 245 }  ,draw opacity=1 ] (347.39,202.4) .. controls (342.83,207.39) and (337.52,211.81) .. (331.48,215.48) .. controls (307.03,230.36) and (277.09,229.15) .. (253.14,214.97) ;  
\draw  [draw opacity=0] (322.73,130.27) .. controls (327,131.57) and (331.13,133.39) .. (335.04,135.74) .. controls (357.76,149.43) and (367.69,177.83) .. (363.55,208.66) -- (285.92,217.34) -- cycle ; \draw  [color={rgb, 255:red, 126; green, 211; blue, 33 }  ,draw opacity=1 ] (322.73,130.27) .. controls (327,131.57) and (331.13,133.39) .. (335.04,135.74) .. controls (357.76,149.43) and (367.69,177.83) .. (363.55,208.66) ;  
\draw  [draw opacity=0] (238.81,158.65) .. controls (259.05,137.86) and (285.3,127.04) .. (309.5,129.02) -- (288.16,218.74) -- cycle ; \draw  [color={rgb, 255:red, 126; green, 211; blue, 33 }  ,draw opacity=1 ] (238.81,158.65) .. controls (259.05,137.86) and (285.3,127.04) .. (309.5,129.02) ;  

\draw (308.75,266.88) node [anchor=north west][inner sep=0.75pt]   [align=left] {E\textsubscript{12}};
\draw (208.67,135.62) node [anchor=north west][inner sep=0.75pt]   [align=left] {E\textsubscript{02}};
\draw (395.13,128.85) node [anchor=north west][inner sep=0.75pt]   [align=left] {E\textsubscript{01}};
\draw (291.27,89.62) node [anchor=north west][inner sep=0.75pt]   [align=left] {E\textsubscript{03}};
\draw (358.87,220.02) node [anchor=north west][inner sep=0.75pt]   [align=left] {E\textsubscript{13}};
\draw (221.47,233.42) node [anchor=north west][inner sep=0.75pt]   [align=left] {E\textsubscript{23}};
\draw (272,158.2) node [anchor=north west][inner sep=0.75pt]  [font=\footnotesize,xslant=0]  {$J_{03,12}$};
\draw (298.4,206.6) node [anchor=north west][inner sep=0.75pt]  [font=\footnotesize]  {$J_{01,23}$};
\draw (330.4,154) node [anchor=north west][inner sep=0.75pt]  [font=\footnotesize]  {$J_{02,13}$};
\draw (314,15.8) node [anchor=north west][inner sep=0.75pt]    {$P_{0}$};
\draw (459.2,257.4) node [anchor=north west][inner sep=0.75pt]    {$P_{1}$};
\draw (147.6,256.6) node [anchor=north west][inner sep=0.75pt]    {$P_{2}$};
\end{tikzpicture}
\caption{Lines on a singular cubic surface of type $4A_1$}
        \label{fig:4A1}
    \end{subfigure}%
    \hfill
    \begin{subfigure}[t]{0.45\textwidth}
        \centering
\begin{tikzpicture}[x=0.75pt,y=0.75pt,yscale=-1,xscale=1]

\draw    (279,63) -- (356,204) ;
\draw    (212,206) -- (279,63) ;
\draw    (212,206) -- (356,204) ;
\draw  [draw opacity=0][line width=1.5]  (260.19,24.83) .. controls (270.77,40.35) and (280.91,66.22) .. (287.21,96.41) .. controls (294.55,131.62) and (294.76,163.54) .. (288.93,181.22) -- (257.84,102.54) -- cycle ;
\draw  [color={rgb, 255:red, 10; green, 15; blue, 219 }  ,draw opacity=1 ][line width=1.5]  (260.19,24.83) .. controls (270.77,40.35) and (280.91,66.22) .. (287.21,96.41) .. controls (294.55,131.62) and (294.76,163.54) .. (288.93,181.22) ;  
\draw  [draw opacity=0][line width=1.5]  (273.85,127.49) .. controls (287.42,135.81) and (300.95,164.16) .. (306.8,199.29) .. controls (309.84,217.54) and (310.32,234.74) .. (308.66,248.77) -- (277.21,204.21) -- cycle ;
\draw  [color={rgb, 255:red, 9; green, 9; blue, 207 }  ,draw opacity=1 ][line width=1.5]  (273.85,127.49) .. controls (287.42,135.81) and (300.95,164.16) .. (306.8,199.29) .. controls (309.84,217.54) and (310.32,234.74) .. (308.66,248.77) ;  

\draw (272,14) node [anchor=north west][inner sep=0.75pt]   [align=left] {L\textsubscript{1}};
\draw (314,237) node [anchor=north west][inner sep=0.75pt]   [align=left] {L\textsubscript{2}};
\draw (244,211.4) node [anchor=north west][inner sep=0.75pt]    {$R_{03}$};
\draw (208,125) node [anchor=north west][inner sep=0.75pt]   [align=left] {R\textsubscript{02}};
\draw (325,119) node [anchor=north west][inner sep=0.75pt]   [align=left] {R\textsubscript{23}};
\draw (288,50) node [anchor=north west][inner sep=0.75pt]   [align=left] {P\textsubscript{2}};
\draw (187,194) node [anchor=north west][inner sep=0.75pt]   [align=left] {P\textsubscript{0}};
\draw (360,197) node [anchor=north west][inner sep=0.75pt]   [align=left] {P\textsubscript{3}};
\draw (277,147) node [anchor=north west][inner sep=0.75pt]   [align=left] {};
\end{tikzpicture}
\caption{Lines on a $2A_1A_3$ surface}
        \label{fig:2A1A3}
    \end{subfigure}
    \caption{}
    \label{fig:figures}
\end{figure}

We denote by $K$ the linear system of generalised twisted cubics containing the reducible elements given by the lines through any singular point $P_i$
 \[
 K = [E_{ij}\cup E_{ik} \cup E_{il}] \in Z'_{aCM}(S).
 \]
 for $\{i,j,k,l\}=\{0,1,2,3\}$.
 
\begin{lemma}\label{lem: opposite edges}
   A pair of opposite edges maps to $K$ under $\varphi$, that is:
   \begin{align*}
       \varphi(E_{ij}, E_{kl}) = K\quad \text{ for indices such that }  \{i,j,k,l\}=\{1,2,3,4\}.
   \end{align*}
\end{lemma}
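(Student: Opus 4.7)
The plan is to compute $\varphi(E_{ij}, E_{kl})$ by specializing Voisin's construction to the endpoint $p = P_i$ of the edge $E_{ij}$ and identifying the resulting flat limit with the tripod representing $K$.

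First I would observe that the opposite edges $L_1 := E_{ij}$ and $L_2 := E_{kl}$ are skew (as seen from Figure~\ref{fig:4A1}) and that their span is exactly the $\mathbb{P}^3 = \Lambda$ cutting out the Cayley cubic $S = \Lambda \cap Y$. Hence Voisin's construction of $\varphi(L_1, L_2)$ is confined to $S$: for each $p \in L_1$ the plane $\pi_p := \langle p, L_2\rangle \subset \Lambda$ meets $Y$ in $\pi_p \cap S = L_2 + C_p$ with $C_p$ a (possibly degenerate) plane conic, and $L_1 \cup C_p$ is the generalised twisted cubic representing $\varphi(L_1,L_2) \in Z$.

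Next I would argue that the assignment $p \mapsto [L_1 \cup C_p]$ is constant. It gives an algebraic morphism $L_1 \cong \mathbb{P}^1 \to Z'$ whose image lies in the fiber of $a'\colon Z'\to G$ over $[\Lambda]$. By the description recalled in~\eqref{Hilb-S}, for the Cayley cubic $S$ (which has only ADE singularities) this fiber is in bijection with the finite set $\Gamma/W(\Gamma_0)$ of linear systems of gtcs on $S$. A morphism from the connected $\mathbb{P}^1$ to a finite set is constant, so $\varphi(L_1,L_2)$ may be computed at any chosen $p \in L_1$, including specialisations to singular points.

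The crux of the argument is the evaluation of this limit at $p = P_i$. As $p \to P_i$ along $E_{ij}$, the plane $\pi_p$ degenerates to $\langle P_i, P_k, P_l\rangle$, using that $L_2 = E_{kl}$ contains $P_k$ and $P_l$. This degenerate plane lies in $\Lambda$ and contains the three edges $E_{ik}, E_{il}, E_{kl}$ joining the vertices $P_i, P_k, P_l$ pairwise, so $\pi_{P_i} \cap S = E_{ik} \cup E_{il} \cup E_{kl}$. The flat limit of the residual conic $C_p$ is therefore the degenerate conic $E_{ik} \cup E_{il}$, and the flat limit of $L_1 \cup C_p$ is the configuration $E_{ij} \cup E_{ik} \cup E_{il}$ of three concurrent lines at the node $P_i$, which is by definition a representative of $K$. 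Combined with the constancy from the previous step, this yields $\varphi(E_{ij}, E_{kl}) = K$.

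The step requiring most care is the flat-limit calculation at $p = P_i$: one must check that the scheme-theoretic limit of $L_1 \cup C_p$ is the reduced tripod of concurrent edges, rather than picking up an embedded point at $P_i$, and that the limit still has Hilbert polynomial $3t+1$ (i.e.~is a genuine gtc in the LLSvS sense). Both points follow once one notices that $\pi_p$ varies linearly in $p$, forcing the degree-$2$ cycle $C_p$ in $\pi_p$ to degenerate algebraically to the divisor $E_{ik} + E_{il}$ inside $\pi_{P_i}$, while the fixed line $L_1$ meets this limit cycle only transversely at the single point $P_i$.
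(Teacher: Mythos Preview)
Your proposal is correct and follows essentially the same approach as the paper: specialise the defining construction of $\varphi$ to a vertex $P_i$ of the tetrahedron, so that the plane $\langle P_i, E_{kl}\rangle$ becomes a face and its section with $S$ is the union of three edges, yielding the tripod through $P_i$ as representative of $K$. The paper's proof carries out exactly this computation in explicit coordinates (for the pair $(E_{02},E_{13})$ and the vertex $p_3$); your version adds the constancy argument via the morphism $L_1\to Z'(S)$ landing in a finite set, which the paper leaves implicit in the well-definedness of $\varphi$.
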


\begin{proof}
    See Appendix~\ref{appendix:cubic-surfaces}.
\end{proof}

\begin{lemma}\label{lem: 4A_1 LLSvS}
Let $S$ be a Cayley cubic. Then
\begin{align*}
    \# Z'_{aCM}(S)=13\quad \mbox{   and   }\quad \#Z_{aCM}'(S)^{\tau} = 1,
\end{align*}
where $Z_{aCM}'(S)^{\tau}$ denotes the locus of fixed points of the involution $\tau$.
\end{lemma}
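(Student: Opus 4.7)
The plan is to translate both counts into root-theoretic questions on the pair $(\Gamma,\Gamma_0)$, where $\Gamma$ is the $E_6$ root system in $K_{\widetilde S}^\perp$ and $\Gamma_0=\{\pm\beta_1,\dots,\pm\beta_4\}$ is the $4A_1$ subsystem of effective roots, and then carry out the orbit computations directly via the bijection $Z'_{aCM}(S)\simeq(\Gamma\setminus\Gamma_0)/W(\Gamma_0)$ together with the description of $\tau$ as $\alpha\mapsto-\alpha$.

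For $\#Z'_{aCM}(S)=13$ I would first make $\Gamma_0$ explicit. The minimal resolution $\widetilde S\to S$ is the blow-up of $\mathbb{P}^2$ at the six vertices of a complete quadrilateral, and the four $(-2)$-curves $\beta_1,\dots,\beta_4$ are the proper transforms of its four sides, giving four pairwise orthogonal classes easily written in the standard Picard basis $h,e_1,\dots,e_6$. Hence $W(\Gamma_0)\cong(\mathbb{Z}/2)^4$ acts on $\Gamma$ by sign reflections along the $\beta_i$. For $g=\prod_{i\in I}s_{\beta_i}$ the fixed set $\Gamma^g$ is the set of roots of $E_6$ orthogonal to every $\beta_i$ with $i\in I$; iterating the standard facts that $\beta^\perp\cap E_6\simeq A_5$, that the orthogonal of a root inside $A_5$ is an $A_3$, and that the orthogonal of a root inside $A_3$ is an $A_1$, one obtains $|\Gamma^g|=72,30,12,2,0$ for $|I|=0,1,2,3,4$. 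Burnside then yields $|\Gamma/W(\Gamma_0)|=(72+4\cdot 30+6\cdot 12+4\cdot 2)/16=17$, and subtracting the four trivial orbits in $\Gamma_0$ gives the desired $13$.

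For the $\tau$-fixed-point count, since $\tau[\alpha]=[-\alpha]$, a fixed orbit corresponds to a root $\alpha\in\Gamma\setminus\Gamma_0$ admitting $w\in W(\Gamma_0)$ with $w\alpha=-\alpha$. Decomposing $\alpha=\alpha_\parallel+\alpha_\perp$ along $\operatorname{span}(\Gamma_0)\oplus\operatorname{span}(\Gamma_0)^\perp$ and using that $W(\Gamma_0)$ fixes the second summand pointwise, one sees $\alpha_\perp=0$ and hence $\alpha$ must lie in the $\mathbb{Q}$-span of $\Gamma_0$. A direct check in the Picard basis then identifies the saturation of the $4A_1$-lattice inside $E_6$ as a $D_4$-lattice with $24$ roots: for instance $\tfrac12(\beta_1+\beta_2+\beta_3+\beta_4)=2h-\sum_i e_i$ is already integral, and the $16$ half-sums $\tfrac12\sum_i\varepsilon_i\beta_i$ with $\varepsilon_i\in\{\pm1\}$ likewise lie in $E_6$. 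These $16$ half-sums form a single $W(\Gamma_0)$-orbit because the Weyl group acts transitively on the sign patterns by independent reflections, producing the unique $\tau$-fixed orbit.

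The main obstacle is the iterated orthogonal-complement calculation in $E_6$: one must verify in particular that the rank-$2$ complement $\Gamma_0^\perp\cap E_6$ contains no roots, so that the saturation of $\Gamma_0$ does not extend beyond $D_4$ and no stray roots of $E_6$ are missed. Once this is settled, the Burnside bookkeeping and the single-orbit conclusion for $\tau$ follow without further difficulty.
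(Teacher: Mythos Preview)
Your proposal is correct and uses the same root-theoretic framework that the paper invokes. The paper's own proof is much terser: for $\#Z'_{aCM}(S)=13$ it simply cites Table~1 of the LLSvS paper, and for $\#Z'_{aCM}(S)^\tau=1$ it only remarks that this can be deduced from the description of $\tau$ in terms of roots, without carrying out any computation. Your approach makes the argument self-contained: the Burnside count over $W(\Gamma_0)\cong(\mathbb Z/2)^4$ recovers the $17$ orbits (hence $13$ aCM ones) directly, and the identification of the saturation of the $4A_1$ sublattice inside $E_6$ as a $D_4$, with its $16$ half-sum roots forming a single $W(\Gamma_0)$-orbit, pins down the unique $\tau$-fixed aCM class explicitly. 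The one point you flag as needing care---that the rank-$2$ orthogonal complement of $\Gamma_0$ in $E_6$ contains no roots---is indeed the crux of the iterated-complement chain, and it is readily checked in the Picard basis (no $e_i-e_j$, no $h-e_i-e_j-e_k$, and no $\pm(2h-\sum_i e_i)$ is orthogonal to all four $\beta_i$). So your argument and the paper's agree in spirit, but yours supplies the details the paper defers to the literature.
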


\begin{proof}
    The first equality follows from \cite[Table~1]{LLSvS}. The second one can be deduced by the description of $\tau$ in terms of roots.
\end{proof}
\medskip

\begin{proof}(of Theorem~\ref{thm: P->W})
    By Lemma~\ref{lem: fundamental} the general point $(L_1,L_2)$ in $P$ corresponds to opposite edges on a Cayley cubic surface. By Lemma~\ref{lem: opposite edges} its image $\varphi((L_1, L_2))$ is an aCM generalised twisted cubic fixed by the involution on $Z$, i.e. a point in $W$. In particular, $\varphi$ restricts to a dominant map $\varphi: P\dashrightarrow W$.
    Finally, the morphism $a|_W\colon W\to G$ is generically 1:1 onto its image because by Lemma~\ref{lem: 4A_1 LLSvS} given a projective space $\Lambda \simeq \mathbb P^3$ which intersects $Y$ in a Cayley cubic $S$, $K$ is the unique linear system of $aCM$ generalised twisted cubics in $S$ which is fixed by $\tau$.
\end{proof}

\subsubsection*{Singular cubic surfaces of type $2A_1A_3$}

Let $S$ be a cubic surface with exactly a singular point $P_2$ of type $A_3$, and 2 singular points $P_0,P_3$ of type $A_1$. It contains exactly 5 lines as depicted in Figure~\ref{fig:2A1A3}: the line $R_{02}$ through $P_0,P_2$; the line $R_{23}$ through $P_2,P_3$; the line $R_{03}$ through $P_{0}, P_3$; the line $L_1$ containing the only singular point $P_2$, and the line $L_2$ contained in the smooth locus.

\begin{proposition}\label{prop:2A1A3-residual}
Let $S$ be a cubic surface of type $2A_1A_3$. Then:
    \begin{align*}
         L_1\mbox{ is residual to }R_{23},\qquad
         L_1 \mbox{ is residual to }R_{02} ,\\         
        L_2\mbox{ is residual to }R_{03},\qquad
        L_2\mbox{ is residual to } L_1.
    \end{align*}
    
Moreover, there is no plane in $\langle S\rangle\simeq \mathbb P^3$ tangent to $S$ at every point of $L_2$.
\end{proposition}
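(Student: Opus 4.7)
My plan is to dispatch all five assertions by direct computation in a convenient normal form for the cubic surface $S$ of type $2A_1A_3$. After placing $P_2 = (1{:}0{:}0{:}0)$ (the $A_3$-point), $P_0 = (0{:}0{:}0{:}1)$ and $P_3 = (0{:}0{:}1{:}0)$ (the two $A_1$-points), the three lines joining pairs of singular points become the coordinate lines $R_{02} = V(x_1,x_2)$, $R_{03} = V(x_0,x_1)$, $R_{23} = V(x_1,x_3)$. Containment of these lines in $S$ forces the equation to have the shape
\[
f = \lambda\, x_0 x_2 x_3 + x_1\, Q(x_0,x_1,x_2,x_3)
\]
for a quadric $Q$. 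The singularity-type conditions then impose explicit relations on the coefficients of $Q$: the $A_1$ conditions at $P_0$ and $P_3$ kill certain monomials, the rank-two condition at $P_2$ splits the tangent cone there into two planes $\Pi_1$ and $\Pi_2$, and the sharpening from $A_2$ to $A_3$ forces the edge $\Pi_1 \cap \Pi_2$ to lie on $S$. This last point is the crucial identification: $L_1 = \Pi_1 \cap \Pi_2$.

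With $L_1$ realised as the intersection of the two tangent-cone planes, (a) and (b) become immediate. The candidate planes $\langle R_{23}, L_1\rangle$ and $\langle R_{02}, L_1\rangle$ are exactly $\Pi_1$ and $\Pi_2$ respectively, and direct substitution of $x_3=0$ (respectively $x_2=0$) into the normal form for $f$ shows that the corresponding plane section vanishes to order two along $R_{23}$ (respectively $R_{02}$), with residual line $L_1$. For (c) one searches within the pencil of planes through $R_{03}$ for the unique element $\pi_c$ whose intersection with $S$ develops a double line along $R_{03}$; a single linear condition pins $\pi_c$ down and simultaneously exhibits the residual line $L_2$ in explicit coordinates, verifying $\pi_c \cap S = 2R_{03} + L_2$. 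For (d) the analogous procedure applied to the pencil of planes through $L_1$ produces $\pi_d = \langle L_1, L_2\rangle$ with $\pi_d \cap S = 2L_1 + L_2$.

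With $L_2$ now explicit, the final statement follows from computing the tangent plane to $S$ at the parametrised points of $L_2$. The resulting family lies in the pencil of planes through $L_2$, and the corresponding map $L_2 \to \mathbb{P}^1$ turns out to be of degree two in the parameter, hence non-constant; therefore no plane is tangent to $S$ at every point of $L_2$. The most delicate ingredient, from which the rest follows quickly, is extracting the $A_3$ condition from the cubic part of $f$: once one sees that this forces the edge of the tangent cone at $P_2$ to lie on $S$, the identification of $L_1$ and therefore the verifications of (a) and (b) become essentially automatic, with the remaining items requiring only routine linear algebra.
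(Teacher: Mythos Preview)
Your proposal is correct and follows essentially the same route as the paper: direct computation in an explicit normal form for the $2A_1A_3$ surface. The paper simply quotes the normal form $t_0t_2t_3 - t_1^2t_3 - t_0t_1^2 = 0$ (with $P_2=(0{:}0{:}1{:}0)$ the $A_3$-point), lists the five lines, and says the residuality claims ``can be checked through straightforward computations,'' working out only the example $\pi=(t_0=0)$, $\pi\cap S = 2R_{23}+L_1$.

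Your write-up is more self-contained in two respects. First, you derive the normal form from the singularity constraints rather than citing it; second, your identification of $L_1$ as the intersection $\Pi_1\cap\Pi_2$ of the two sheets of the tangent cone at the $A_3$-point is a genuinely nice geometric observation that the paper does not make explicit (in the paper's coordinates the tangent cone at $P_2$ is $t_0t_3=0$, and indeed $L_1=(t_0=t_3=0)$). This makes claims (a) and (b) conceptually transparent rather than merely computational. Your treatment of the final assertion---that the tangent-plane map $L_2\to\mathbb{P}^1$ has degree two---is also spelled out, whereas the paper leaves it implicit. None of this changes the underlying method, but it does make your version more informative.
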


\begin{proof}
    See Appendix~\ref{appendix:cubic-surfaces}.
\end{proof}

\begin{lemma}
Let $S$ be a cubic surface of type $2A_1A_3$. Then
\begin{align*}
    \# Z'_{aCM}(S)=5\quad \mbox{   and   }\quad \#Z_{aCM}'(S)^{\tau} = 1.
\end{align*}
\end{lemma}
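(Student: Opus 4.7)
The plan is to use the combinatorial description recalled in \S\ref{SubSec:Srfs}: the set $Z'_{aCM}(S)$ is in bijection with $(\Gamma \setminus \Gamma_0)/W(\Gamma_0)$, where $\Gamma \subset K_{\widetilde{S}}^\perp$ is the root system of type $E_6$ coming from the minimal resolution $\widetilde{S}\to S$, and $\Gamma_0 \subset \Gamma$ is the sub-root system of effective roots. For a cubic surface of type $2A_1A_3$, the exceptional divisor has components of type $A_1$, $A_1$, and $A_3$, so $\Gamma_0$ is the orthogonal sum $A_1 \sqcup A_1 \sqcup A_3$ embedded in $E_6$. The first equality then follows by reading off the entry for $2A_1 + A_3$ in \cite[Table~1]{LLSvS}, which tabulates $\#Z'_{aCM}(S)$ against the ADE type of the singularities of $S$.

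For the second equality, recall that $\tau$ acts on the root lattice by $\alpha \mapsto -\alpha$. Since both $\Gamma$ and $\Gamma_0$ are closed under negation, this descends to an involution on $(\Gamma \setminus \Gamma_0)/W(\Gamma_0)$, and a $\tau$-fixed linear system corresponds to an orbit $W(\Gamma_0)\cdot \alpha$ satisfying $-\alpha \in W(\Gamma_0)\cdot \alpha$. To conclude I would pick a representative for each of the five orbits and verify this condition one by one; only a single orbit is expected to be closed under negation, giving $\#Z'_{aCM}(S)^\tau = 1$.

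A more geometric route exploits Proposition~\ref{prop:2A1A3-residual}: each of the five elements of $Z'_{aCM}(S)$ can be matched to a distinguished reducible member built from the five lines $R_{02}$, $R_{23}$, $R_{03}$, $L_1$, $L_2$, and $\tau$ acts on such configurations by residuation in a quadric. The residuality relations of Proposition~\ref{prop:2A1A3-residual} single out the triple $L_1 \cup R_{02} \cup R_{23}$ of lines meeting at the $A_3$ point $P_2$ as the natural $\tau$-invariant configuration, while the remaining four linear systems split into two $\tau$-swapped pairs. This parallels the role played by $K$ in the proof of Lemma~\ref{lem: 4A_1 LLSvS}.

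The main obstacle is making the matching between Weyl-group orbits and reducible aCM configurations on $S$ precise; once that dictionary is in place, both counts are short finite checks. I would present the argument as the Weyl-orbit computation, mirroring the structure of the proof of Lemma~\ref{lem: 4A_1 LLSvS}, and cross-check with the line-configuration picture to ensure the unique $\tau$-fixed orbit has the anticipated geometric incarnation.
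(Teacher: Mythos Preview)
Your approach matches the paper's own proof: cite \cite[Table~1]{LLSvS} for the first equality, and use that $\tau$ acts as $\alpha\mapsto -\alpha$ on roots to count fixed orbits for the second. One small correction to your write-up: $\Gamma_0$ (the set of \emph{effective} roots, i.e.\ the classes of the exceptional $(-2)$-curves) is \emph{not} closed under negation; the reason $-1$ descends to the aCM orbit space is rather that $-1$ commutes with the $W(\Gamma_0)$-action on $\Gamma$ and that the union of orbits meeting $\Gamma_0$ (equivalently the full sub-root system $W(\Gamma_0)\cdot\Gamma_0$) is stable under $-1$.
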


\begin{proof}
    The first equality follows from \cite[Table~1]{LLSvS}. The second one can be deduced by the description of $\tau$ in terms of roots.
\end{proof}

\subsubsection*{Singular cubic surfaces of type $A_1A_5$}\label{ssec:A1A5}

Let $S$ be a cubic surface with an $A_1$ and an $A_5$ singularity. It contains exactly two lines, say $L_1$ passing through both singular points, and $L_2$ which intersects the singular locus in the $A_5$ point.
\begin{proposition}\label{prop: lines-A1A5}
    Let $S$ be a cubic surface of type $A_1A_5$. Then
    \begin{align*}
        L_2 \text{ is residual to itself (i.e. triple on $S$), and } L_2 \text{ is residual to } L_1.
    \end{align*}
\end{proposition}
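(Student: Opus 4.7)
My plan follows the template used for the preceding propositions on $4A_1$ and $2A_1A_3$ cubics, whose proofs are deferred to Appendix~\ref{appendix:cubic-surfaces}. The idea is to fix a projective normal form for a cubic surface with exactly one $A_1$ and one $A_5$ singularity, identify the two lines $L_1, L_2$ explicitly, and verify both residuality claims by direct computation with the defining equation. Since, by Bruce's stratification \cite{bruce}, the projective equivalence class of an $A_1 A_5$ cubic surface is unique, checking the statement on one such model will suffice.

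A convenient choice is
\[
S := V(F) \subset \P^3, \qquad F = XZW + Y^2 W + Z^3.
\]
Solving $\nabla F = 0$ yields exactly two singular points, $P_1 = [1{:}0{:}0{:}0]$ and $P_2 = [0{:}0{:}0{:}1]$. At $P_2$, in the chart $W = 1$, the quadratic part of $F$ is $XZ + Y^2$, which is non-degenerate, giving an $A_1$ singularity. At $P_1$, in the chart $X = 1$, the successive substitutions $Z \mapsto Z + Y^2$ and $W \mapsto W + Z^2 - 3ZY^2 + 3Y^4$ transform the local equation into $ZW - Y^6 = 0$, which is Arnold's normal form for $A_5$. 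A short search for lines through each singular point shows that $S$ contains exactly two of them: $L_1 = V(Y,Z)$, joining $P_1$ and $P_2$, and $L_2 = V(Z,W)$, passing only through $P_1$, in agreement with the proposition.

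For the triple-line claim I would take the plane $\Pi_2 := V(W)$, which contains $L_2$, and compute $F|_{W=0} = Z^3$; this gives $\Pi_2 \cap S = 3 L_2$ scheme-theoretically, so $L_2$ is residual to itself. For the second claim, observe that $L_1$ and $L_2$ meet at the $A_5$ point $P_1$, hence span a unique plane $\Pi_1 := \langle L_1, L_2 \rangle = V(Z)$, and $F|_{Z=0} = Y^2 W$ gives $\Pi_1 \cap S = 2 L_1 + L_2$, proving that $L_2$ is residual to $L_1$.

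The only non-routine point is the reduction to this normal form, which I would quote from \cite{bruce}; everything else is a one-line verification. An alternative, more intrinsic argument would use the minimal resolution $\widetilde{S} \to S$ and rewrite the two residuality claims as intersection-theoretic identities between the strict transforms $\widetilde{L}_1, \widetilde{L}_2$ and the six exceptional curves of the $A_1 + A_5$ configuration on the weak del Pezzo surface $\widetilde{S}$. This route is more conceptual but considerably longer, and the explicit coordinate computation above also matches the style of the other proofs in the appendix.
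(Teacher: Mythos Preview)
Your proposal is correct and follows essentially the same approach as the paper's own proof: fix an explicit projective normal form for the $A_1A_5$ cubic surface, identify $L_1$ and $L_2$ in coordinates, and verify the two residuality claims by intersecting $S$ with the planes $V(W)$ and $V(Z)$. The paper uses the equivalent model $t_3(t_0t_2 - t_1^2) + t_0^3 = 0$ and the analogous one-line plane intersections; your version is slightly more detailed (you also check the singularity types and the line count), but the argument is the same.
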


\begin{proof}
    See Appendix~\ref{appendix:cubic-surfaces}.
\end{proof}

\begin{lemma}\label{lem:gtc-on-A1A5}
Let $S$ be a cubic surface of type $A_1A_5$. Then
\begin{align*}
    \# Z'_{aCM}(S)= \#Z_{aCM}'(S)^{\tau} = 1.
\end{align*}
\end{lemma}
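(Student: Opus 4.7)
The plan is to follow exactly the blueprint used in the two preceding lemmas (for the $4A_1$ and $2A_1 A_3$ cases), relying on the description of $Z'_{aCM}(S)$ in terms of root data on the minimal resolution $\wt S \to S$ recalled just after equation~\eqref{Hilb-S}. Concretely, I would first identify the lattice of effective roots. Since the singularity type is $A_1 A_5$, the subset $\Gamma_0 \subset K_{\wt S}^\perp \cong E_6$ is a root subsystem of type $A_1 \oplus A_5$, so that $|\Gamma_0| = 2 + 30 = 32$ while $|\Gamma| = 72$, leaving $40$ non-effective roots on which $W(\Gamma_0) = W(A_1) \times W(A_5)$ acts.

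Next, for the first equality I would simply invoke \cite[Table~1]{LLSvS}, which records the number of orbits in $(\Gamma \setminus \Gamma_0)/W(\Gamma_0)$ for each ADE configuration; in the $A_1 A_5$ row one reads off $\#Z'_{aCM}(S) = 1$. As a sanity check one can verify by hand that the $40$ non-effective roots form a single orbit under $W(A_1)\times W(A_5)$: the classes in $E_6/(A_1\oplus A_5)$ admit transitive action, and this is consistent with the line/twisted cubic count in Proposition~\ref{prop: lines-A1A5} (the unique aCM class being represented by the reducible curve $L_1 \cup L_2 \cup L_2' $ for the appropriate residual configuration).

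For the second equality the only remaining step is to observe that the antisymplectic involution $\tau$ acts on $Z'_{aCM}(S)$ through $\alpha \mapsto -\alpha$ on $\Gamma$, as recalled in \S\ref{SubSec:Srfs} (and established in \cite{Lehn-oberwolfach}). Since $Z'_{aCM}(S)$ has a single point, this point is automatically $\tau$-fixed, giving $\#Z'_{aCM}(S)^{\tau} = 1$.

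The argument is almost formulaic; the only mild obstacle is making sure that the single orbit one extracts from the $E_6$ root combinatorics really corresponds to an $aCM$ class (i.e. it lies in $\Gamma \setminus \Gamma_0$ and not in $\Gamma_0$) — this is guaranteed once one separates effective from non-effective roots as above, but it is worth flagging because in the $4A_1$ and $2A_1 A_3$ cases there is room for arithmetic slips in counting orbits. Beyond that, no new ideas are needed and the proof can be presented in one or two lines referencing \cite[Table~1]{LLSvS} and the sign action of $\tau$.
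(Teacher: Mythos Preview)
Your proposal is correct and follows essentially the same approach as the paper, which simply states ``Clear from \cite[Table~1]{LLSvS}.'' You have merely unpacked this by recalling the root-theoretic description, doing the orbit count as a sanity check, and making explicit the trivial observation that a one-point set is automatically fixed by $\tau$; none of this deviates from the paper's route.
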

\begin{proof}
    Clear from \cite[Table~1]{LLSvS}.
\end{proof}

\subsubsection{Simple elliptic cubic surfaces}
Any cubic surface $S$ with a simple elliptic singularity is a cone over a planar elliptic curve $E$. Let $v$ be the vertex. The Fano variety $F(S)$ of $S$ can be naturally identified with the embedded elliptic curve $E$, indeed to any point $P$ of $E$ corresponds the line through $P$ and $v$.
The scheme of generalised twisted cubics is isomorphic to $\Sym^3(E)$ and the summation map to $E$ is a $\P^2$-bundle giving the fibration $\Hilb^{gtc}(S)\to Z'(S)$ \cite[Proposition~2.7]{LLSvS}. An aCM generalised twisted cubic corresponds to the union of 3 lines, which join the vertex with 3 non-collinear points on $E$. If the 3 points are collinear, then they are the support of a non aCM curve with an embedded point at the vertex. Via the isomorphism  $F(S)\simeq E$, the residual line can be expressed in terms of the group law of  $E$, and the analogue of $P$ for lines on the surface $S$
\begin{align*}
    P(S):= \{(p_1,p_2)\in F(S)\times F(S) \mid \mbox{ $p_1$ and $p_2$ have the same residual}\}
\end{align*}
admits a nice description:

\begin{proposition}\label{prop:triplesOnE6}
    Let $S$ be a simple elliptic cubic surface. Then
    \begin{itemize}
        \item the residual to the line $p\in E\simeq F(S)$ is given by $-2p$;

        \item $P(S) =\{(p,p+\xi) \mid  \xi\in E[2]\}$;
        \item the triple lines on $S$ are given by the 3-torsion points $E[3]\subset E$. In particular, every simple elliptic cubic surface has 9 triple lines.
    \end{itemize}
\end{proposition}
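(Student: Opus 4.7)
The plan is to compute the residual of a line on $S$ explicitly in terms of the group law on $E$, and then obtain the characterizations of $P(S)$ and of the triple lines as immediate consequences.

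Since $S$ is the cone with vertex $v$ over the planar elliptic curve $E\subset\Pi$, every line on $S$ is a ruling $L_p=\langle v,p\rangle$ for a unique $p\in E$. The first step is to identify, for each $p$, the plane in $\langle S\rangle\simeq\P^3$ tangent to $S$ at every point of $L_p$. I claim this plane is $\pi_p:=\langle v, T_pE\rangle$, where $T_pE\subset\Pi$ is the projective tangent line to $E$ at $p$. Indeed, at any smooth point $q\in L_p\setminus\{v\}$, the projective tangent plane $T_qS$ is spanned by the ruling direction $L_p$ and the direction of motion along $E$ at $p$, so $T_qS=\pi_p$ independently of $q$; hence $\pi_p$ is tangent to $S$ all along $L_p$, and is the unique such plane since tangency at a single smooth point already pins down a plane containing $L_p$.

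The next step is to compute $\pi_p\cap S$ as a divisor on $S$. Because $\pi_p$ passes through $v$ and $v\notin\Pi$, we have $\pi_p\cap\Pi=T_pE$, so $\pi_p\cap S$ is the cone from $v$ over the divisor $T_pE\cap E$ on $E$. Fixing a group law on $E$ with origin at a flex, three collinear points of a plane cubic sum to zero, so $T_pE\cap E=2p+(-2p)$ as a divisor. Coning from $v$ gives $\pi_p\cap S=2L_p+L_{-2p}$, which proves the first bullet: the residual of $L_p$ is $L_{-2p}$.

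The remaining bullets are immediate consequences. Two lines $L_{p_1},L_{p_2}$ share the same residual exactly when $-2p_1=-2p_2$, i.e.\ $p_1-p_2\in E[2]$, which yields $P(S)=\{(p,p+\xi)\mid\xi\in E[2]\}$. A triple line satisfies $p=-2p$, i.e.\ $3p=0$, so the triple lines correspond to $E[3]\simeq(\mathbb Z/3\mathbb Z)^{\oplus 2}$, of which there are exactly $9$. There is no substantive obstacle to this proof; the only mildly delicate point is the verification that $\pi_p$ is constant along $L_p$, but this is transparent from the cone structure of $S$ together with $p\in T_pE\subset\pi_p$, which ensures $L_p\subset\pi_p$.
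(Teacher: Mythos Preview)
Your proof is correct and follows exactly the approach the paper indicates (in a commented-out proof sketch): the group law on the planar cubic $E$ gives that the tangent line at $p$ meets $E$ at $-2p$, so the cone over this computation yields the residual line, and the remaining bullets are immediate. Your write-up simply makes explicit the identification of the tangent plane along $L_p$ with $\langle v, T_pE\rangle$, which the paper leaves implicit.
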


We explore now some connections of our study of singular cubic surfaces with results of Gounelas and Kouvidakis \cite{GK-lines}. As a consequence, we deduce the existence of a surface of type $A_1A_5$, which is a fundamental little ingredient for the main theorem.

Let $Y$ be a smooth cubic fourfold. The natural map $\gamma\colon \mathbb L \to Y$ from the universal family of lines to the cubic fourfold is a fibration in (2,3)-complete intersection curves. Given a point $y\in Y$, its fibre $\gamma^{-1}(y)$ consists of the lines trough the point $y$. Gounelas and Kouvidakis studied the $g^1_3$ of these curves coming from the rulings of the corresponding quadric and their relation with the Voisin map.
 They proved that for the general cubic fourfolds the surface $V$ of triple lines intersects $S_{\II}$ in an irreducible curve with 3780 nodes. In particular, the general triple line is of type $I$ \cite[Theorem~B]{GK-lines}.

\begin{corollary}\label{cor:lineSecTyp}
    Let $Y$ be a general cubic fourfold.
\begin{enumerate}
    \item 
    For any $L\in S_{\II}$ the intersection $\Lambda_L\cap Y$ is a non-normal singular cubic surface.
    \item If $L\in V\cap S_{\II}$, then $\Lambda_L\cap Y$ is of type $X_7$ or worse.
    \item    If $L\in (V\cap S_{\II})_{\sing}$ is a node, then $\Lambda_L\cap Y$ is of type $X_8$ or worse. In particular, there are 3780 projective spaces of dimension 3 intersecting $Y$ in a singular surface of type $X_8$.
    \item For any $L\in S_{\II}$ the surface $\Lambda_L\cap Y$ is not of type $X_9$.
\end{enumerate}
\end{corollary}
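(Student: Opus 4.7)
The starting observation, valid for all four parts, is that whenever $L \in S_\II$ the cubic surface $S_L := \Lambda_L \cap Y$ must be singular along $L$. By the definition of type II, $\Lambda_L \subseteq T_P Y$ for every $P\in L$, so $T_P(\Lambda_L \cap Y) = T_P \Lambda_L \cap T_P Y = \Lambda_L$ is 3-dimensional, forcing every $P\in L$ to be a singular point of the 2-dimensional surface $S_L$. Consequently $\sing(S_L) \supseteq L$, so $S_L$ is non-normal, and via the degeneration chain~\eqref{degeneration-surfaces} the type of $S_L$ must lie among $X_6, X_7, X_8, X_9$. This proves (1).

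For (4) I plan to invoke Appendix~\ref{appendix:cubic-surfaces} to recall that a cubic surface of type $X_9$ contains a plane as a component; an $S_L$ of this type would thus force $Y$ to contain a plane, contradicting the generality of $Y$.

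The substance of the argument lies in (2) and (3), where the idea is to make the triple-line condition explicit. Choose coordinates on $\Lambda_L \simeq \P^3$ with $L = V(x_2, x_3)$; since $S_L$ is singular along $L$, its defining equation takes the form $f = A\,x_2^2 + B\,x_2 x_3 + C\,x_3^2$ with $A, B, C$ linear forms. For a plane $\Xi_{[a:b]} := V(ax_2 + bx_3)$ in the pencil through $L$ inside $\Lambda_L$, a direct computation yields $\Xi_{[a:b]} \cap S_L = 2L + R_{[a:b]}$, with the residual line $R_{[a:b]}$ equal to $L$ precisely when the coefficients of $x_0$ and $x_1$ in the linear form $b^2 A - ab B + a^2 C$ both vanish. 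These two coefficients are binary quadratic forms $Q_1(a,b)$ and $Q_2(a,b)$; the condition $L \in V$ that $L$ be triple amounts to $Q_1, Q_2$ sharing a common zero (one equation, matching the codimension of $V \cap S_\II$ inside $S_\II$), while $L$ being a node of $V \cap S_\II$ corresponds to two local branches of triple lines specialising to $L$, equivalently to $Q_1$ and $Q_2$ being proportional, and hence to the existence of two distinct planes $\Xi_1, \Xi_2 \subset \Lambda_L$ with $\Xi_i \cap S_L = 3L$. I expect to match these two conditions, respectively, with types $X_7$ and $X_8$ by comparing the resulting local equations of $S_L$ with the normal forms recalled in Appendix~\ref{appendix:cubic-surfaces}. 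The numerical count of 3780 in (3) is then a direct consequence of \cite[Theorem~B]{GK-lines} combined with (4).

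The principal obstacle is precisely this last matching: verifying that the presence of one (resp.\ two) common zeros of $(Q_1, Q_2)$ is equivalent to $S_L$ being of type $X_7$ (resp.\ $X_8$) in the Bruce stratification. I plan to handle this stratum by stratum, in the spirit of the proof of Theorem~\ref{thm:A3-singularity-quintic}, and if need be with computer-algebra assistance.
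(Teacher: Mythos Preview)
Your plan for (1)--(3) is essentially the route the paper takes. The Appendix computes, for each of $X_6,X_7,X_8$, exactly how many planes $\Xi$ in the pencil through the singular line satisfy $\Xi\cap S=3L$ (zero, one, two respectively), and the paper then matches this count with the loci $S_{\II}$, $V\cap S_{\II}$, $(V\cap S_{\II})_{\sing}$ by citing \cite{GK-lines} directly for the geometric interpretation: a node of $V\cap S_{\II}$ is a type~II line admitting \emph{two} triple tangent $2$-planes (see the comments just above \cite[Remark~4.20]{GK-lines}). Your binary-quadratic computation is a repackaging of exactly that residual-line count. The one place you diverge is in proposing to deduce the node interpretation from ``two local branches of triple lines specialising to $L$'' rather than from \cite{GK-lines}; that step is not automatic (the branches of $V\cap S_{\II}$ live in $F$, not in the pencil of planes in $\Lambda_L$), so either cite \cite{GK-lines} as the paper does or supply the missing local argument.

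Your plan for (4) contains a genuine error. The surface $X_9$ is \emph{not} reducible and does not contain a plane: in the Bruce stratification used here (and as the paper states explicitly in its own proof) $X_9$ is the cone over a \emph{cuspidal} plane cubic, hence irreducible. The Appendix does not treat $X_9$ at all, so invoking it will not give you what you claim. The paper's argument is different and short: if $\Lambda_L\cap Y$ were of type $X_9$, the cone structure would force the curve of lines in $Y$ through the vertex $v\in Y$ to contain a cuspidal cubic, contradicting \cite[Proposition~3.5]{GK-lines} for a general cubic fourfold. Replace your argument for (4) with this one.
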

\begin{proof}
    Given a line $L$ of type $\II$, the cubic surface $\Lambda_L \cap Y$ is singular along the entire $L$.
    By the study of Gounelas and Kouvidakis a line as in item (2) corresponds to a triple line of type II, and one as in item (3) corresponds to a type two line with two triple tangent 2-planes \cite[comments right above Remark~4.20]{GK-lines}.
    The distinction in cases comes from an elementary analysis of lines on non-normal cubic surfaces (see Appendix~\ref{appendix:cubic-surfaces}). The second statement in item (3) follows from \cite[Theorem~B]{GK-lines}.

    For the last item, let $L$ be a line of type $\II$ such that $\Lambda_L\cap Y$ is a singular cubic surface of type $X_9$, that is a cone over a cuspidal cubic curve. Then the lines passing trough the vertex of the cone form a curve, which contains a cuspidal cubic. This is in contradiction with 
    \cite[Proposition 3.5]{GK-lines}.
\end{proof}
\begin{proposition}\label{cor: A1A5-E6}
    Let $Y$ be a general cubic fourfold, and let $L_1$ and $L_2$ be general distinct lines such that $L_2$ is residual to $L_1$, and $L_2$ is of type $I$ and triple. Then $\langle {L_1}, \Lambda_{L_2} \rangle$ is a 3-dimensional space which cuts $Y$ in a cubic surface of type either $A_1A_5$, or $\wt E_6$.
\end{proposition}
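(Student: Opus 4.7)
The plan is to establish the result in three steps: a dimension count showing that $\langle L_1, \Lambda_{L_2}\rangle$ is genuinely 3-dimensional, an identification of the key geometric features of the cubic surface $S := \langle L_1, \Lambda_{L_2}\rangle \cap Y$, and finally an elimination argument along the classification of singular cubic surfaces reviewed in this section.

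First, suppose for contradiction that $L_1 \subset \Lambda_{L_2}$. Since $\psi(L_1) = L_2$ forces $L_1$ and $L_2$ to be coplanar, the plane $\Pi := \langle L_1, L_2\rangle$ would then lie in $\Lambda_{L_2}$, and since both are projective planes, $\Pi = \Lambda_{L_2}$. But $\Pi \cap Y = 2L_1 + L_2$ whereas $\Lambda_{L_2} \cap Y = 3L_2$ (because $L_2$ is triple), contradicting $L_1 \neq L_2$. Hence $\langle L_1,\Lambda_{L_2}\rangle$ is a $\P^3$. Setting $S := \langle L_1,\Lambda_{L_2}\rangle \cap Y$, the inclusion $\Lambda_{L_2}\subset \langle L_1,\Lambda_{L_2}\rangle$ gives $\Lambda_{L_2} \cap S = 3L_2$, so that $L_2$ is a triple line on $S$; analogously, $\Pi \cap S = 2L_1 + L_2$ shows that $L_2$ is the residual line of $L_1$ on $S$.

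It remains to run through the list of possible types of $S$ in \eqref{degeneration-surfaces} and select those admitting a triple line. Smooth cubic surfaces admit none, as the Gauss map along any line on a smooth cubic surface is non-constant. Among the normal singular types, a Cayley cubic has only the six edges and three joins depicted in Figure~\ref{fig:4A1}, none of which is triple, and the $2A_1 A_3$ case contains no triple line at all by Proposition~\ref{prop:2A1A3-residual}. This leaves $A_1 A_5$ and $\wt E_6$ among the normal strata, where triple lines do occur by Proposition~\ref{prop: lines-A1A5} and Proposition~\ref{prop:triplesOnE6} respectively. The non-normal strata $X_6,\ldots,X_9$ arise, by Corollary~\ref{cor:lineSecTyp}, from $\P^3$'s of the form $\Lambda_{L'}$ with $L'\in S_{\II}$, forming a 2-dimensional family; since our pairs $(L_1, L_2)$ also move in a 2-dimensional family (parametrised by $L_2\in V$ with finitely many $L_1\in \psi^{-1}(L_2)\setminus\{L_2\}$), the two families can be compared and a generic pair can be arranged to avoid the non-normal locus.

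The main obstacle I anticipate is the rigorous exclusion of the non-normal stratum: the two 2-dimensional families just described have the same dimension, so a naive parameter count does not immediately suffice. The cleanest remedy is an explicit transversality statement for the natural map $(L_1,L_2)\mapsto \langle L_1,\Lambda_{L_2}\rangle$ against the image of $S_{\II}\to \Gr(4,6)$, or, alternatively, a direct local verification at a general point of $L_2\in S$ — using, for instance, the derivatives characterisation of Remark~\ref{huy-derivatives} — that the singularities of $S$ are in fact rational double points, which suffices to rule out $X_6,\ldots,X_9$ at once.
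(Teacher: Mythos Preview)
Your opening two paragraphs are more careful than the paper's own proof: the dimension count for $\langle L_1,\Lambda_{L_2}\rangle$ and the verification that $L_2$ is triple on $S$ are correct, and the paper simply asserts the 3-dimensionality.

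There is, however, a genuine gap in the third paragraph. You restrict attention to the list~\eqref{degeneration-surfaces}, but that list enumerates only degenerations of \emph{Cayley} cubics, and you have not argued that $S$ is one. There are normal cubic surfaces outside this list---for instance those with a single $A_5$, $D_5$ or $E_6$ singularity---which your case analysis does not address; the presence of a triple line alone does not a~priori confine $S$ to~\eqref{degeneration-surfaces}. The paper closes this gap via Lemma~\ref{lem: fundamental}: since $\psi(L_1)=L_2=\psi(L_2)$, the pair $(L_1,L_2)$ lies in $P$, and the span $\langle L_1,\Lambda_{L_2}\rangle=\langle\Lambda_{L_1},\Lambda_{L_2}\rangle$ is the specialisation of the $\P^3$ spanned by a general point of $P$, which cuts $Y$ in a Cayley cubic. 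Hence $S$ is a Cayley degeneration and the list applies.

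For excluding the non-normal stratum you correctly identify that the bare dimension count is insufficient, but your proposed remedies (transversality against the image of $S_{\II}\to\Gr(4,6)$, or a local RDP verification via Remark~\ref{huy-derivatives}) are more laborious than needed. The paper's argument is cleaner: by Corollary~\ref{cor:lineSecTyp} the generic non-normal linear section of $Y$ is of type $X_6$, and the Appendix shows that $X_6$ carries no triple line. Since every $S$ in your family contains the triple line $L_2$, the image of $(L_1,L_2)\mapsto\langle L_1,\Lambda_{L_2}\rangle$ meets the non-normal locus only in a proper closed substratum, so the general member of your $2$-dimensional family is normal.
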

\begin{proof}
    By Lemma \ref{lem: fundamental} a general point of $P$ consists of a pair of lines whose linear span intersects $Y$ in a Cayley cubic. Thus, if $L_1$ and $L_2$ are as in the hypothesis, then the 3 dimensional linear space $\langle L_1, \Lambda_{L_2}\rangle$ intersects $Y$ in a cubic surface $S$ which is a degeneration of a Cayley cubic and contains $L_2$ as a triple line on the surface. By the analysis of lines on cubic surfaces, $S$ is either of type $A_1A_5$, $\wt E_6$ or non-normal.
 As both $V$ and $S_{\II}$ are surfaces, both the families of non-normal cubic surfaces and of surfaces containing a triple line, which is triple on the surface, have dimension 2. By Corollary \ref{cor:lineSecTyp} and the Appendix the general non-normal cubic surface (which is a linear section of $Y$) does not contain a triple line, thus the general surface containing a triple line is of type $\widetilde E_6$ or $A_1 A_5$.
\end{proof}

    \begin{corollary}\label{cor:existence A1A5}
    Let $Y$ be a general cubic fourfold. Then  there exists $\Lambda \in G$ such that $\Lambda\cap Y$ is a cubic surface of type $A_1A_5$.

\end{corollary}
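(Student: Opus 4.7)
The plan is to apply Proposition~\ref{cor: A1A5-E6} to a specific pair $(L_1,L_2)$ of lines and then exclude the $\widetilde E_6$ alternative, so that $A_1A_5$ remains as the only possibility.

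I would first produce the candidate pair. By \cite[Theorem~B]{GK-lines}, the surface $V\subset F$ of triple lines on a general $Y$ is irreducible and $V\cap S_{\II}$ is a proper curve, so a general $L_2\in V$ is of type~I and satisfies $\psi(L_2)=L_2$. As $\deg\psi=16$, the fibre $\psi^{-1}(L_2)$ contains $15$ further lines $L_1\neq L_2$, and for each such pair Proposition~\ref{cor: A1A5-E6} gives that $\Lambda:=\langle L_1,\Lambda_{L_2}\rangle$ cuts $Y$ in a cubic surface of type $A_1A_5$ or $\widetilde E_6$. Letting $L_2$ vary over $V$ produces a $2$-dimensional family of candidate $\P^3$'s in $G$.

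Next, I would exclude the possibility that every such $\P^3$ gives $\widetilde E_6$ by a dimension count in $G$. The image of the $2$-dimensional family above in $G$ is genuinely $2$-dimensional: different triple lines of type~I have different tangent planes, since $\Lambda_{L_2}=\Lambda_{L_2'}$ together with $\Lambda_{L_2}\cap Y=3L_2$ forces $L_2'\subset 3L_2$ and hence $L_2=L_2'$, so the induced $\P^3$'s vary non-trivially with $L_2$. On the other hand, the locus $G_{\widetilde E_6}\subset G$ of cone sections has dimension at most $1$ for a general $Y$: a cone section requires a vertex $v\in Y$, the containment $\P^3\subseteq T_vY$, and a further cone-shape condition on the restriction of the defining cubic, amounting to codimension $\geq 7$ in the $\P^{19}$ of cubic surfaces. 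Hence the $2$-dimensional image cannot be contained in the $\leq 1$-dimensional $G_{\widetilde E_6}$, and at least one pair must yield a section of type $A_1A_5$.

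The principal obstacle is a clean rigorous justification of the bound $\dim G_{\widetilde E_6}\leq 1$ (and, more subtly, that the image of the $2$-dimensional family really is $2$-dimensional even if it lands in $G_{\widetilde E_6}$, where lines on a cone come in $1$-parameter families and naive counts can drop). An alternative route avoids this bound by exploiting Proposition~\ref{prop:triplesOnE6}: on an $\widetilde E_6$ cone over an elliptic curve $E$ the residual map on lines is $p\mapsto -2p$, so the condition $\psi(L_1)=L_2$ is satisfied on such a cone only by the $4$ preimages of $L_2$ under $[-2]$, a rigid $4$-torsion constraint that is incompatible with having all $15$ preimages in $\psi^{-1}(L_2)\setminus\{L_2\}$ arranged into $\widetilde E_6$ configurations as $L_2$ varies in the irreducible surface $V$.
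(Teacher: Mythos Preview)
Your overall strategy matches the paper's exactly: apply Proposition~\ref{cor: A1A5-E6}, build a $2$-dimensional family from $\widehat\psi^{-1}(V)$, map it to $G$, and argue the image cannot lie entirely in $G_{\widetilde E_6}$. Two points deserve comment.

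First, for generic finiteness of $(L_1,L_2)\mapsto \Lambda$ onto its image in $G$, your argument that distinct triple lines have distinct tangent planes only controls the projection to the $L_2$-coordinate; it does not by itself show the image in $G$ is $2$-dimensional (different $\Lambda_{L_2}$'s could in principle sit inside a common $\Lambda$). The paper argues instead on the fibres over $G$: on an $A_1A_5$ surface there are exactly two lines (Proposition~\ref{prop: lines-A1A5}), and on an $\widetilde E_6$ cone the relevant pairs are counted by torsion on $E$ (Proposition~\ref{prop:triplesOnE6}), so each fibre is finite and the image is genuinely $2$-dimensional.

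Second, and more importantly, your alternative ``$4$-torsion'' route does not avoid the bound $\dim G_{\widetilde E_6}\le 1$. The observation that on a fixed cone only four lines satisfy $-2p=L_2$ is precisely the generic-finiteness ingredient above; it says nothing about how many \emph{distinct} cones can contain $L_2$. The $15$ preimages $L_1$ could a priori sit on $15$ different $\widetilde E_6$ cones with vertices at $15$ different points of $L_2$, so no contradiction arises without the dimension bound. The paper closes this gap by citing \cite[Section~1.4]{Nesterov-Oberdieck} for $\dim G_{\widetilde E_6}=1$; your heuristic codimension count is along the right lines but, as you yourself flag, is not a complete argument.
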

\begin{proof}
Let $L_1, L_2$ be general distinct lines such that $L_2$ is a triple line of type $I$ and $L_2$ is residual to $L_1$. 
We consider the projective space $\Lambda:=\langle \Lambda_{L_2}, L_1\rangle$ and the singular cubic surface $S:=\Lambda\cap Y$ it defines. In particular, the line $L_2$ is triple on $S$ and is residual to $L_1$. By Proposition~\ref{cor: A1A5-E6} the surface $S$ is either singular of type $A_1A_5$ or isomorphic to a cone over an elliptic curve.

In other words, the association $(L,R)\mapsto \Lambda$ for pairs of lines as above defines a rational map $\widehat\psi^{-1}(V)\dashrightarrow G$ whose image lies in the union of loci $G_{A_1A_5}\cup G_{\widetilde E_6}$. This is generically finite by Propositions~\ref{prop:triplesOnE6} and~\ref{prop: lines-A1A5}. Since $G_{\widetilde E_6}$ is 1-dimensional by the discussion in \cite[Section~1.4]{Nesterov-Oberdieck} the locus $G_{A_1A_5}$ is not empty.
\end{proof}

\section{The monodromy group of $\psi$ is maximal}\label{sec:monodromy}
In this section we prove that the monodromy group of $\wh\psi$ is maximal. In order to do so, we use that the ramification of $\wh\psi$ is simple, as proven in Section~\ref{sec:ramification} and the results of Section~\ref{sec:variety-P} about the variety $P$.

Given a generically finite dominant morphism $f\colon X\to Y$ of degree $d$ between irreducible varieties (necessarily of the same dimension), we obtain a degree d extension of function fields $k(X)/k(Y)$, and taking the Galois closure $K/k(Y)$ of this extension, we denote by $\mathrm{Gal}_f = \mathrm{Gal}(K/k(Y))$. This agrees with the usual monodromy group $\mathrm{Mon}_f$ (see \cite{Har,sottile-yahl}), which is defined as the image in the symmetric group $S_d$ of the group of deck transformations of the unramified (i.e., topological) cover $X\setminus f^{-1}(\mathrm{Branch}(f)) \to U$, where $U := Y \setminus\mathrm{Branch}(f)$.
Recall the following classical results for $f\colon X\to Y$ and $U$ as above.
\begin{proposition}[{\cite[p.698]{Har}}]\label{prop:transposition}
    If there exists a fibre of $f$ with a unique point of ramification index two and all other points unramified, then $\mathrm{Mon}_f \subset S_d$ contains a transposition.
\end{proposition}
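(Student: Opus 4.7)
The plan is to produce a single element of $\pi_1(U,y_0)$ whose image under the monodromy representation $\rho\colon \pi_1(U,y_0)\to S_d$ is a transposition. Since $\mathrm{Mon}_f$ is by definition the image of $\rho$, exhibiting one such element suffices.

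I would work analytically near the given point $b\in \mathrm{Branch}(f)$ whose fibre $f^{-1}(b)=\{p,q_1,\ldots,q_{d-2}\}$ consists of an index-two ramification point $p$ and $d-2$ unramified points $q_i$. Because $f$ is finite and \'etale at each $q_i$, one can find pairwise disjoint analytic neighborhoods $V_p\ni p$ and $V_i\ni q_i$ together with a small analytic polydisk $D\ni b$ such that $f^{-1}(D)=V_p\sqcup V_1\sqcup\cdots\sqcup V_{d-2}$ and each $V_i\to D$ is a biholomorphism. The hypothesis of simple ramification at $p$, combined with the local normal form of finite morphisms at a tame ramification point, gives analytic coordinates in which $V_p\to D$ is, up to a biholomorphism on a transversal slice, the standard double cover $z\mapsto z^2$.

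Next I would choose $D$ so that $D\cap\mathrm{Branch}(f)$ is contained in a smooth analytic hypersurface (by shrinking $D$ and picking $b$ at a smooth point of the branch divisor) and take a small loop $\gamma$ in $D\setminus\mathrm{Branch}(f)$ encircling this hypersurface once. Lifting $\gamma$ through each biholomorphism $V_i\to D$ yields closed loops, so every $q_i$-sheet is fixed; lifting through $V_p\to D$ interchanges the two preimages of the basepoint of $\gamma$ inside $V_p$, since the monodromy of $z\mapsto z^2$ around the origin swaps the two sheets. Conjugating by an arc joining $y_0$ to the basepoint of $\gamma$ inside $U$ produces an element of $\pi_1(U,y_0)$ whose image in $S_d$ is the transposition that swaps the two sheets over $V_p$ and fixes the $q_i$-sheets.

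The only real subtlety is the geometric arrangement of $\mathrm{Branch}(f)$ near $b$: the argument requires that a generator of the local fundamental group of $D\setminus\mathrm{Branch}(f)$ produce a single transposition rather than a product of them. This is automatic when the branch locus is of pure codimension one and $b$ is chosen at one of its smooth points; purity of the branch locus for finite morphisms between smooth varieties (Zariski--Nagata) ensures the former, and a generic point in the closure of the image of the given ramification locus realises the latter. This is the setting in which the proposition is usually stated, and it is the one in which it will be applied to $\wh\psi$, whose ramification is simple by Proposition~\ref{cor:ram-simple}.
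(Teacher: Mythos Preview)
The paper does not give its own proof of this proposition: it is stated as a classical result with a citation to \cite[p.698]{Har} and then used as a black box. Your argument is the standard one (essentially the one Harris sketches): pick a smooth point of the branch divisor realising the hypothesised fibre, take a small loop around it, and read off the permutation from the local normal forms $z\mapsto z^2$ versus $z\mapsto z$. This is correct, and your remark about needing $b$ to be a smooth point of the branch locus, together with purity of the branch locus, is exactly the point one has to be careful about. Nothing further is required here.
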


Let $X^{(s)}_U$ be the complement of the big diagonal in the fibre product of $X_U$ $s$-times with itself over $U$. In other words, the fibre of the natural morphism $X^{(s)}_U \to U$ consists
of $s$ distinct points in the fibre of $f$.
\begin{lemma}[{\cite[ Proposition 2]{sottile-yahl}}] \label{lem:2-transitive}
The variety $X^{(s)}_U$ is irreducible if and only if $\mathrm{Mon}_f$ is an s-transitive subgroup of $S_d$.
\end{lemma}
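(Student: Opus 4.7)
The plan is to reduce the statement to the classical dictionary between connected components of a finite étale cover and orbits of the monodromy action on a fibre. Everything hinges on recognising $X^{(s)}_U \to U$ as a finite étale cover whose monodromy is transparently the induced action of $\Mon_f$ on ordered $s$-tuples of distinct points in a fibre of $f$.

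First, I would set things up carefully. Since we have removed the branch locus, the map $X_U \to U$ is finite étale of degree $d$. The $s$-fold fibre product $X_U \times_U \cdots \times_U X_U$ is then also finite étale over $U$, with fibre over a basepoint $y_0 \in U$ equal to the set of ordered $s$-tuples of points of $f^{-1}(y_0)$. The big diagonal is closed (a finite union of loci cut out by the equality of two chosen coordinates), so after removing it we obtain $X^{(s)}_U \to U$, still finite étale, whose fibre over $y_0$ is the set of ordered $s$-tuples of \emph{distinct} points of $f^{-1}(y_0)$. Because $U$ is smooth irreducible and $X^{(s)}_U \to U$ is étale, connected components of $X^{(s)}_U$ coincide with its irreducible components; thus irreducibility is equivalent to connectedness as a topological cover of $U$.

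Second, I would invoke the fundamental equivalence of categories between finite étale covers of $U$ and finite $\pi_1(U,y_0)$-sets. Under this equivalence, a cover is connected if and only if $\pi_1(U,y_0)$ acts transitively on its fibre. Applied to $X_U \to U$, this action factors, by definition, through $\Mon_f \subseteq S_d$ acting on $f^{-1}(y_0) \cong \{1,\dots,d\}$. Because the fibre product and the removal of the big diagonal are categorical operations on covers, the monodromy action on the fibre of $X^{(s)}_U$ is exactly the induced diagonal action of $\Mon_f$ on the set of injective maps $\{1,\dots,s\} \hookrightarrow \{1,\dots,d\}$.

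The last step is purely combinatorial: by definition, a subgroup $G \subseteq S_d$ is $s$-transitive exactly when its action on ordered $s$-tuples of distinct elements of $\{1,\dots,d\}$ is transitive. Chaining the two equivalences yields the lemma. The main point requiring care is the identification of irreducibility with connectedness of the étale cover; since $U$ is smooth and irreducible and $X^{(s)}_U \to U$ is étale, this is automatic. A secondary subtlety is verifying that the big diagonal is genuinely closed (so that its complement is an open, hence smooth, subvariety of the fibre product), which follows from the separatedness of $X_U \to U$.
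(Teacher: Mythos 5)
Your argument is correct and is precisely the standard covering-space proof of this fact; the paper itself offers no proof, citing it directly from Sottile--Yahl, whose Proposition~2 rests on the same dictionary between connected components of the finite \'etale cover $X^{(s)}_U \to U$ and orbits of $\pi_1(U,y_0)$ acting through $\mathrm{Mon}_f$ on ordered $s$-tuples of distinct points in a fibre. Your care in identifying connectedness with irreducibility via \'etaleness over a smooth irreducible base is the right point to flag, and it is satisfied in the paper's application since the base is the smooth variety $F$ minus the branch locus.
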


We want to apply now these results to the resolution of the Voisin map $\widehat\psi\colon \widehat F\to F$.
For the remainder of the section, let $Y$ be a general cubic fourfold, $F$ be its Fano variety of lines and $U\subset F$ be the complement of the branch locus of $\widehat \psi$.
We recall that $P\subset F\times F$ introduced in \S\ref{SubSec:P} is mapped onto $W$ under the map $\varphi\colon F\times F\dashrightarrow Z$.
Thanks to Proposition~\ref{cor:ram-simple} we can readily apply Proposition \ref{prop:transposition} and deduce that the monodromy group $\mathrm{Mon}_{\wh\psi}$ contains a transposition.
In order to apply Lemma \ref{lem:2-transitive} we are led to study the irreducibility of $\wh F^{(2)}_U$ which is strictly related to the variety $P$. The rest of the section is then devoted to prove the following which is the crucial ingredient to conclude the maximality of monodromy.
\begin{proposition}\label{prop:irreducible}
    The variety $P$ is irreducible.
\end{proposition}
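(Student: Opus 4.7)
The plan is to exploit the generically finite rational map
$\varphi|_P \colon P \dashrightarrow W$
furnished by Theorem~\ref{thm: P->W}. Since $W$ is irreducible (being a smooth connected component of the fixed locus of $\tau$ on the irreducible hyperk\"ahler $Z$), $P$ will be irreducible as soon as the monodromy group $M$ of $\varphi|_P$ acts transitively on its $6$ sheets.

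First I would identify the sheets. For a general $[K]\in W$ the associated cubic surface $S = \Lambda\cap Y$ is a Cayley cubic by Lemma~\ref{lem: fundamental}, and Lemma~\ref{lem: opposite edges} identifies the fibre $\varphi|_P^{-1}([K])$ with the $6$ ordered pairs of opposite edges of $S$, equivalently with the $6$ ordered partitions of the four singular points $\{P_0,\ldots,P_3\}$ of $S$ into two pairs. Consequently $M$ is the image of a subgroup $M_4\le S_4$ under the faithful embedding $S_4\hookrightarrow S_6$ induced by this action; a direct check of the five transitive subgroups of $S_4$ shows that $M$ acts transitively on the $6$ sheets if and only if $M_4\supseteq A_4$.

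I would force $M_4\supseteq A_4$ in two steps, using the stratification~\eqref{degeneration-surfaces}. First, $M_4$ is transitive on $\{P_0,\ldots,P_3\}$: the parameter space $\overline{G_{4A_1}}\subset G$ is irreducible (it is birational to $W$ via $a|_W$), and the universal incidence
$I = \{(\Lambda,P)\in G\times Y : \Lambda\subset T_P Y,\ P\in\Lambda\}$,
which is a $\mathbb{P}^3$-bundle over the irreducible $Y$, is irreducible, so the degree-$4$ cover of $\overline{G_{4A_1}}$ parametrising pairs $(\Lambda, P)$ with $P$ a vertex of the Cayley cubic $\Lambda\cap Y$ is irreducible. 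Second, $M_4$ contains a $3$-cycle: by Corollary~\ref{cor:existence A1A5} the stratum $G_{A_1A_5}\subset\overline{G_{4A_1}}$ is non-empty, and at a general point of this codimension-$2$ stratum three of the four $A_1$ vertices have collided into a single $A_5$ singularity along the chain~\eqref{degeneration-surfaces}. A local monodromy analysis of the cover $J\to\overline{G_{4A_1}}$ around such an $A_5$ specialisation — exploiting the natural $S_3$-symmetry of the versal $A_5$-deformation when partitioning the vanishing cycles into $3A_1$ — yields a $3$-cycle permuting the three colliding vertices. Since any transitive subgroup of $S_4$ containing a $3$-cycle contains $A_4$, we deduce $M_4\supseteq A_4$, and hence $M$ is transitive on the $6$ sheets, proving that $P$ is irreducible.

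The hardest step will be the local monodromy argument at $G_{A_1A_5}$: the stratum is only of codimension $2$ in $\overline{G_{4A_1}}$ and so does not admit a ``small loop'' around it in the classical sense, so one has to work at the level of the ramification theory of $J\to\overline{G_{4A_1}}$ and verify that the Weyl-group symmetry of the simultaneous resolution of the $A_5$ singularity actually lifts to a genuine monodromy element along a path in $\overline{G_{4A_1}}$ that avoids the full discriminant. A subsidiary technical point is the irreducibility of the incidence $J$: even though $I$ is irreducible, restricting to the preimage of $\overline{G_{4A_1}}$ requires either a transversality/dimension argument or reducing to a fibre-irreducibility statement for the projection $J\to Y$.
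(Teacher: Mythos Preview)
Your strategy—factoring the monodromy of $\varphi|_P$ through the $S_4$-action on the four nodes of the Cayley section and forcing $M_4\supseteq A_4$—is natural and the group theory is correct, but both steps you flag as difficult are genuine gaps. For transitivity of $M_4$, the irreducibility of the full incidence $I$ (a $\P^3$-bundle over $Y$) does not transfer to its restriction $J$ over the codimension-$3$ locus $\overline{G_{4A_1}}$ in the discriminant: nothing prevents $J$ from splitting into several components each dominating $\overline{G_{4A_1}}$, and the fibre-irreducibility route you mention would require understanding the locus of Cayley sections through a fixed point of $Y$, which is itself nontrivial. The $3$-cycle step is more serious: since $G_{A_1A_5}$ has codimension $2$ in $\overline{G_{4A_1}}$ there is no small loop encircling it, and invoking Weyl-group monodromy of the versal $A_5$-deformation would require showing that the classifying map from $\overline{G_{4A_1}}$ to the versal base is transverse enough to realise the $S_3$ permuting the three merging nodes as actual monodromy over $G_{4A_1}$—you give no mechanism for this, and it is not automatic.

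The paper sidesteps both problems by replacing the monodromy computation with a connectedness argument. Instead of extracting a $3$-cycle near $A_1A_5$, it uses Proposition~\ref{prop:flat-open}: over a point $w\in W$ with $a(w)\in G_{A_1A_5}$ the flat fibre $\varphi^{-1}(w)$ is a length-$6$ scheme supported on the two points $(L_1,L_2)$ and $\sigma(L_1,L_2)=(L_2,L_1)$, each with multiplicity $3$, so three of the six nearby sheets collapse to a single point and hence lie in one component, giving irreducibility of $P/\sigma$ directly. To pass from $P/\sigma$ to $P$, rather than proving transitivity of $M_4$, the paper recycles the transposition in $\Mon_{\wh\psi}$ already supplied by Propositions~\ref{cor:ram-simple} and~\ref{prop:transposition}: the loop in $F$ realising this transposition lifts along $P\dashrightarrow F$ to a path joining $(L_1,L_2)$ to $(L_2,L_1)$. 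Thus the paper's argument is deliberately entangled with the partial monodromy information about $\wh\psi$, whereas your approach aims to be self-contained but does not close.
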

We postpone the proof of it and illustrate right away how to get the maximality of monodromy.
\begin{theorem}\label{thm:monodromy-maximal}
The monodromy of $\wh\psi$ is maximal, i.e. $\Mon_{\wh\psi} = S_{16}$. 
\end{theorem}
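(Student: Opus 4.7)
The plan is to deduce Theorem~\ref{thm:monodromy-maximal} from the combination of two properties of $\widehat\psi$ together with a classical fact from permutation group theory: any $2$-transitive subgroup of $S_d$ that contains a transposition is the full symmetric group $S_d$. The two properties I would establish are (i) $\Mon_{\widehat\psi}$ contains a transposition, and (ii) $\Mon_{\widehat\psi}$ acts $2$-transitively on the general fibre of $\widehat\psi$.

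For (i), I would invoke Proposition~\ref{prop:transposition}. By Proposition~\ref{cor:ram-simple} the ramification of $\widehat\psi$ at a general point of the ramification divisor is simple, and by Theorem~\ref{GK-RamBirational} over a general point $R$ of the branch divisor there is exactly one ramification point. Hence the generic branch fibre consists of one point of ramification index $2$ and $14$ unramified points; this matches the degree count $2+14=16$, and the hypothesis of Proposition~\ref{prop:transposition} is satisfied. Therefore $\Mon_{\widehat\psi}\subset S_{16}$ contains a transposition.

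For (ii), I would apply Lemma~\ref{lem:2-transitive} with $s=2$, which reduces the problem to proving that the variety $\widehat F^{(2)}_U$ is irreducible. Over $U\subset F$ the map $\widehat\psi$ is \'etale, so every fibre consists of $16$ distinct points corresponding to $16$ distinct type~I lines $L$ on $Y$ with $\psi(L)=R$ for $R\in U$. Thus $\widehat F^{(2)}_U$ is naturally identified with the open subset of $P$ consisting of pairs $(L_1,L_2)$ with $L_i$ of type~I, $L_1\neq L_2$, and $\psi(L_1)=\psi(L_2)\in U$, which is dense in $P$ by the very definition of the latter. Granted Proposition~\ref{prop:irreducible} the variety $P$ is irreducible, hence so is $\widehat F^{(2)}_U$, and Lemma~\ref{lem:2-transitive} gives the sought $2$-transitivity.

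Combining (i) and (ii) with the classical group-theoretic fact recalled above yields $\Mon_{\widehat\psi}=S_{16}$, proving the theorem. The genuine content of the argument is Proposition~\ref{prop:irreducible}: the irreducibility of $P$ is where the geometric analysis of Section~\ref{sec:variety-P} enters the picture, via the Voisin map $\varphi\colon F\times F\dashrightarrow Z$, the identification of $\varphi(P)$ with the fixed component $W$ of the antisymplectic involution on the LLSvS variety (Theorem~\ref{thm: P->W}), and the study of Cayley cubics together with their degenerations carried out in \S\ref{SubSec:Srfs}. Once $P$ is known to be irreducible, the maximality of the monodromy follows by the essentially formal argument above.
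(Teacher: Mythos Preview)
Your proposal is correct and follows essentially the same route as the paper: it combines Proposition~\ref{cor:ram-simple} (together with Theorem~\ref{GK-RamBirational}) and Proposition~\ref{prop:transposition} to obtain a transposition, then uses Proposition~\ref{prop:irreducible} and Lemma~\ref{lem:2-transitive} to obtain $2$-transitivity, and concludes via the standard group-theoretic fact. You spell out slightly more explicitly than the paper the identification of $\widehat F^{(2)}_U$ with a dense open subset of $P$, which is the implicit bridge between Proposition~\ref{prop:irreducible} and Lemma~\ref{lem:2-transitive}.
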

\begin{proof}
    From Propositions~\ref{cor:ram-simple} and \ref{prop:transposition} $\Mon_{\wh\psi}$ contains a transposition and thanks to Proposition \ref{prop:irreducible} and \ref{lem:2-transitive} the action of $\Mon_{\wh \psi}$ is $2$-transitive. Thus $\Mon_{\wh\psi}$ contains all transpositions: Indeed up to renumbering we can assume the transposition $(1,2)\in \Mon_{\wh \psi}$. By 2-transitivity there exists $\sigma\in\Mon_{\wh\psi}$ such that $\sigma(1)=a$ and $\sigma(2)=b$ for any $a$ and $b$ and we have
\[
\sigma \circ (1,2) \circ \sigma^{-1} = (a,b).
\]
Hence $\Mon_\wh\psi$ contains any transposition and coincides with the entire symmetric group of 16 elements.
\end{proof}

In order to prove the irreducibility of $P$ we take advantage of the map $\varphi \colon P \dashrightarrow W$ and use its various properties we have showed in Section \ref{sec:variety-P}, to compute the ramification of $\varphi$ at some special points. Clearly there is an open subset $U_{et}\subset W$ for which the base change $P_{U_{et}} \to U_{et}$ is étale of degree 6 and proper, but as we are interested in the ramification of $\varphi$ we need to study a slightly bigger open subset, thus we provide the following technical proposition.
Recall that $Z$ admits a generically finite map $a\colon Z\dashrightarrow G$ whose indeterminacy locus is $Y$.

\begin{proposition}\label{prop:flat-open}
There is an open $U_{\mathrm{flat}}\subset W$ such that
\begin{enumerate}
    \item The pullback $\varphi\colon P_{U_\flatt} \to U_\flatt$ is a flat finite morphism.
    \item There exists a point  $w\in U_\flatt$  such that the 3-dimensional linear space $a(w)$ intersects $Y$ in a cubic surface of type $A_1A_5$.
\end{enumerate}
\end{proposition}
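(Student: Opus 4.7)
The plan is to construct $U_\flatt$ as the union of the generic flatness locus of $\varphi|_P$ with an explicit open neighborhood of a chosen $A_1A_5$-point, verifying finite-flatness at that point by hand. Concretely, I would first resolve the indeterminacies of $\varphi\colon F\times F\dashrightarrow Z$ to obtain a proper morphism $\widetilde\varphi\colon \widetilde P\to W$, where $\widetilde P$ is the strict transform of $P$; outside the image of the exceptional locus, $\widetilde\varphi$ coincides with $\varphi|_P$. Since $\dim P=\dim W=4$ and $\varphi|_P$ is dominant by Theorem~\ref{thm: P->W}, $\widetilde\varphi$ is generically finite; generic flatness together with upper semicontinuity of fiber dimension and properness then produces a dense open $U_1\subset W$ over which $\widetilde\varphi$ is finite and flat, handling~(1) for any open contained in $U_1$.

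For~(2), I would apply Corollary~\ref{cor:existence A1A5} to exhibit $\Lambda\in G$ with $S:=\Lambda\cap Y$ of type $A_1A_5$. By Lemma~\ref{lem:gtc-on-A1A5}, $S$ carries a unique $\tau$-fixed aCM generalised twisted cubic, giving a point $w\in \mathrm{Fix}(\tau)$ with $a(w)=\Lambda$; since $a$ is regular at $w$ (its indeterminacy locus is the Lagrangian $Y\subset Z$), necessarily $w\in W$. By Proposition~\ref{prop: lines-A1A5}, $S$ carries exactly the two lines $L_1, L_2$ and they satisfy $\psi(L_1)=L_2=\psi(L_2)$, so set-theoretically $\varphi^{-1}(w)\cap P=\{(L_1,L_2),(L_2,L_1)\}$; a direct check that the exceptional divisor of the chosen resolution does not map to $w$ then upgrades this to finiteness of $\widetilde\varphi^{-1}(w)$, and hence to finiteness of $\widetilde\varphi$ on an open neighborhood $V\ni w$.

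Flatness of $\widetilde\varphi$ at $w$ would then follow by miracle flatness: $W$ is smooth, and $\widetilde P$ should be Cohen--Macaulay at the preimages of $w$ because $P$ is birational to an irreducible component of the fibered self-product $\widehat F\times_F\widehat F$ with respect to the finite flat map $\widehat\psi\colon \widehat F\to F$. Setting $U_\flatt:=U_1\cup V$ (shrunk to an open containing $w$) would then produce an open satisfying both~(1) and~(2). The main obstacle will be verifying the Cohen--Macaulay property of $\widetilde P$ at the fiber over $w$ and ruling out contributions from the exceptional divisor of the resolution of $\varphi$ over the $A_1A_5$-point; both are local questions around the explicit configuration of the two lines $L_1,L_2$ on the prescribed singular cubic surface, and should yield to a direct analysis of the fibered self-product structure near $(L_1,L_2)$.
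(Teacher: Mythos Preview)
Your strategy is essentially the paper's: resolve $\varphi|_P$, remove the image of exceptional loci to get finiteness, and invoke miracle flatness over the smooth base $W$; then locate an $A_1A_5$-point via Corollary~\ref{cor:existence A1A5} and check directly that the fibre over it consists of $\{(L_1,L_2),(L_2,L_1)\}$. The paper disposes of precisely the two obstacles you flag. For the exceptional-divisor check it uses the graph closure in $P\times G$ (via $\varphi_G=a\circ\varphi$) and observes that for a pair of distinct type~I lines one has $\varphi_G(L_1,L_2)=\langle\Lambda_{L_1},\Lambda_{L_2}\rangle$, so $\varphi_G$ is already regular there; combined with $\#Z'_{aCM}(S)=1$ for $A_1A_5$ surfaces (Lemma~\ref{lem:gtc-on-A1A5}), this forces $\varphi$ itself to be regular at $(L_1,L_2)$, hence $w\notin\wh\varphi(E_\pi)$, while finiteness of $\wh\varphi^{-1}(w)$ gives $w\notin\wh\varphi(E_1)$. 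For regularity of $P$ at $(L_1,L_2)$ the paper does not go through Cohen--Macaulayness but proves directly (Lemma~\ref{lem:singular-locus-P}) that the singular locus of $\wh F\times_{\wh\psi}\wh F$ is contained in $E\times_{\wh\psi}E$ by a tangent-space computation; since $L_1,L_2$ are type~I (the general triple line being of type~I), the point $(L_1,L_2)$ is smooth on the fibre product and hence on $P$. Your proposed CM argument via flatness of $\wh\psi$ would also work, but note that ``birational to a component'' is not enough---you need the local identification of $P$ with the off-diagonal part of the fibre product near $(L_1,L_2)$, which is immediate once both lines are type~I.
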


\begin{proof}
Let $P \xleftarrow{\pi}\wh P \xrightarrow{\wh\varphi} W$ be any resolution of indeterminacy of $\varphi$ and $\wh P \xrightarrow{\varphi_1} P_{\mathrm{Stein}} \xrightarrow{\varphi_2} W$ be its Stein factorization. 
We denote by $E_\pi$ and $E_1$ the exceptional loci of $\pi$ and $\varphi_1$ respectively. Then the morphism
\[
    \wh{P}\setminus \wh \varphi^{-1}\left(\wh \varphi \left( E_1\cup E_\pi \right)\right) \to W \setminus \wh \varphi (E_1 \cup E_\pi) =:U
    \]
    is proper, as it is the base change of the proper morphism $\wh \varphi$. It has finite fibres, as we removed the exceptional divisor $E_1$. Thus it is a finite morphism. Having removed $E_\pi$, the source space is actually a subset of $P$.
    Hence, we have proven that the restriction of $\varphi$ to $P_U \to U$ is a finite morphism.

    Moreover, the further restriction of $\varphi$ to $U\setminus\varphi(P_{\mathrm{sing}})$ is a finite morphism of non singular varieties, hence flat by \cite[Chapter~4, Remark~3.11]{Liu}. This open set satisfies point (1) of the claim.\medskip
    
    We now construct explicitly such an open set, for which we show point (2) of the claim. In order to do so, we consider the map $\varphi$ and the auxiliary map $\varphi_G$
\[
\begin{tikzcd}[row sep=tiny]
    P \ar[r,dashed, "\varphi"]\ar[rr, dashed, bend left, "\varphi_G"] & W\ar[r,"a"] & G.
\end{tikzcd}
\]
Their graph closures are going to serve as an explicit resolution of indeterminacy.
    \begin{align*}
\wh P:=\mathrm{closure}\{((L_1,L_2),w)\in P\times W \mid \varphi(L_1,L_2) = w \};\\
\wh P_G:=\mathrm{closure}\{((L_1,L_2),g)\in P\times G \mid \varphi_G(L_1,L_2) = g \}. 
\end{align*}
As $\varphi_G = a\circ \varphi$, by continuity we have a natural map $\wh P \to \wh P_G$ given by
\begin{align}\label{map-P--PG}
\wh P \longrightarrow \wh P_G, \quad \left( L_1, L_2, w \right) \longmapsto \left( L_1, L_2, a(w) \right).
\end{align}
As before, we factor the resolution of indeterminacy $\wh \varphi \colon \wh P \to W$ in $\wh\varphi = \varphi_2\circ \varphi_1$ using Stein and we denote by $E_1$ the exceptional divisor of $\varphi_1$ and $E_\pi$ the exceptional divisor of $\pi\colon \wh P \to P$.

By Corollary~\ref{cor:existence A1A5} there exists a point $w\in W$ such that $a(w)\in G$ is a point that intersects $Y$ in a surface $S$ with singularities $A_1A_5$. By Proposition~\ref{prop: lines-A1A5} the surface $S$ contains exactly 2 lines $L_1$ and $L_2$, that by generality are of type $I$, such that $\psi(L_1) = L_2 = \psi (L_2)$. 
By the very definition of $\wh P_G$, its points consist of a pair of lines and a 3-dimensional linear subspace, which contains the 2 lines, hence 
\[
\wh\varphi_G^{-1} \big( a(w) \big) = \left\lbrace \Big( \big( L_1,L_2 \big), a(w) \Big),\ \Big( \big( L_2,L_1 \big), a(w) \Big) \right\rbrace.
\]
Now, as there is just one linear system of arithmetically Cohen-Macaulay generalised twisted cubics on a cubic surface of type $A_1A_5$ by Lemma~\ref{lem:gtc-on-A1A5} we get
\[
\wh\varphi^{-1} \big( w \big) = \left\lbrace \Big( \big( L_1,L_2 \big), w \Big),\ \Big( \big( L_2,L_1 \big), w \Big) \right\rbrace.
\]
 As $\wh\varphi^{-1}(w)$ is finite, we have $w\not \in \wh\varphi (E_1)$.

Notice that $\varphi_G|_P:P\dashrightarrow G$ is defined by the association $(L_1,L_2)\mapsto \langle L_1,L_2\rangle = \langle L_1,L_2, R\rangle$ where $R = \psi(L_1) = \psi(L_2)$. Moreover, if both $L_1$ and $L_2$ are of type I, then we have $\varphi_G((L_1,L_2)) = \langle \Lambda_{L_1}, \Lambda_{L_2}\rangle$, hence pairs of distinct lines of type I in $P$ are not in the indeterminacy locus  of $\varphi$.
Since $Z_{aCM}(S)$ is a singleton for a cubic surface $S$ of type $A_1A_5$ by Lemma~\ref{lem:gtc-on-A1A5}, the map $\varphi$ is defined at the point $(L_1,L_2)$, thus we conclude that $w\not \in \wh\varphi(E_\pi)$.

Now we are left to prove that $(L_1, L_2)$ is a regular point of $P$, so that $\varphi$ is flat at the points $(L_1, L_2)$ and $(L_2, L_1)$ in $P$. We prove the regularity of $P$ at the point $(L_1,L_2)$ in the lemma below.
\end{proof}
\begin{lemma}\label{lem:singular-locus-P}
    The singular locus of $\wh F \times_{\wh\psi} \wh F$ is contained in $E \times_{\wh\psi} E$.
\end{lemma}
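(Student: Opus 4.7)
The plan is to exploit that $\wh\psi$ is étale away from $E$. From the proof of Proposition~\ref{cor:ram-simple} we know that the ramification divisor of the finite morphism $\wh\psi\colon \wh F\to F$ coincides with the exceptional divisor $E$. Since $\wh F$ and $F$ are both smooth of the same dimension and $\wh\psi$ is finite, miracle flatness makes $\wh\psi$ flat; being flat and unramified on $\wh F\setminus E$, it is étale there.

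With this in hand, the statement is a standard base-change observation. Let $(x_1,x_2)\in \wh F\times_{\wh\psi}\wh F$ be a point that does not lie in $E\times_{\wh\psi} E$, so that at least one of the two coordinates, say $x_1$, lies in $\wh F\setminus E$. I would then pick a Zariski open neighborhood $U$ of $x_1$ on which $\wh\psi|_U$ is étale and look at the second projection
\[
p_2\colon U\times_F \wh F \longrightarrow \wh F,
\]
which is the base change of the étale morphism $\wh\psi|_U$ along $\wh\psi\colon \wh F\to F$, hence is itself étale. Since the target $\wh F$ is smooth and étaleness preserves smoothness of the source, the open subscheme $U\times_F\wh F\subset \wh F\times_F\wh F$ is smooth, so in particular $(x_1,x_2)$ is a non-singular point of $\wh F\times_{\wh\psi}\wh F$.

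The only real input is the identification of the ramification locus of $\wh\psi$ with $E$, which is already available from Proposition~\ref{cor:ram-simple}; once this is in hand the argument is immediate and I do not foresee any serious obstacle.
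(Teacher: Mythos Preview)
Your argument is correct and is essentially the same as the paper's: both rest on the fact that $\wh\psi$ is \'etale off $E$, so that if one coordinate of a point of the fibre product avoids $E$ the fibre product is smooth there. The only cosmetic difference is that the paper phrases this as a direct tangent-space computation (the tangent space of the fibre product is the fibre product of the tangent spaces, and if one differential is an isomorphism this has the expected dimension~$4$), whereas you invoke the categorical stability of \'etaleness under base change together with miracle flatness; neither version requires anything beyond the identification $\mathrm{Ram}(\wh\psi)=E$ already established in Proposition~\ref{cor:ram-simple}.
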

\begin{proof}
    Let $(f_1,f_2)\in \wh F \times_{\wh\psi} \wh F$ be a point with $\wh \psi(f_1) = \wh \psi(f_2) = R$. Then its tangent space is the fibre product over the diagram
    \[
\begin{tikzcd}[row sep=tiny]
    T_{f_1} \wh F \ar[rd, "d\wh\psi"] && T_{f_2} \wh F  \ar[ld,"d\wh\psi", swap]\\
    & T_R F
\end{tikzcd}
\]
where $T_{f_1} \wh F$ and  $T_{f_2} \wh F$ have dimension 4 as $\wh F$ is a non-singular fourfold.
If one among $f_1$ and $f_2$ is not in the ramification of $\wh \psi$, which coincides with the exceptional divisor $E$, then $d\wh\psi$ is an isomorphism  at that point and the fibre product is a vector space of dimension 4, hence the claim.
\end{proof}

\begin{proof}(of Proposition~\ref{prop:irreducible})
First we are going to prove that $P$ modulo the switching involution $\sigma$ is irreducible. In order to do so, we start with a point $q\in U_{\mathrm{flat}}\cap W_{A_1A_5}$ which exists by Proposition~\ref{prop:flat-open}. As exactly 2 lines lie on a surface with singularities $A_1A_5$ (see Proposition~\ref{prop: lines-A1A5}), the preimage $\varphi^{-1}(q)$ consists of two points, say:
\[
(L_1,L_2) \quad \text{and} \quad (L_2, L_1).
\]
Using the involution $\sigma$ on $F\times F$ switching the factors, which clearly restrict to an involution of $P$, we see that $P$ at $(L_1, L_2)$ is locally isomorphic to $P$ at $\sigma((L_1,L_2)) = (L_2, L_1)$.
As $\varphi$ is flat, proper and of degree 6 at the point $q$, we conclude that the schematic preimage $\varphi^{-1}(q)$ is a finite scheme with multiplicity 3 both at $(L_1,L_2)$ and $(L_2,L_1)$.
We pick now a point $p\in U_{\mathrm{et}}$ close enough to $q$ and focus on the preimages $\varphi^{-1}(p) = \{p_1,\cdots, p_6\}$: If we let $p$ tend to $q$, then 3 points out of the 6 in $\varphi^{-1}(p)$, say $p_1, p_2 $ and $p_3$, tend to $(L_1,L_2)$ and the other 3 points, i.e. $p_4,p_5$ and $p_6$, tend to $(L_2,L_1)$. Moreover, $\sigma$ maps the first triple to the second one, i.e. $\sigma(\{p_1,p_2,p_3\}) = \{p_4,p_5,p_6\}$, thus $p_1,p_2$ and $p_3$ lie in the same connected component of $P$, in other words the quotient of $P$ by the switching involution is path-connected. By Lemma~\ref{lem:singular-locus-P} we deduce that $P/\sigma$ is irreducible.

Hence, it is now sufficient to show that for some point $p_1\in P_{U_{et}}$, the points $p_1$ and $\sigma(p_1)$ lie in the same connected component of $P_{U_{et}}$. To do so, we consider one transposition $\tau$ in $\Mon_\psi$, whose existence is granted by Propositions~\ref{prop:transposition} and \ref{cor:ram-simple}. If we consider it as an element of deck transformations, this means that there exists a point $R\in F\setminus \Branch(\wh \psi)$, and a loop $\gamma$ centered at $R$ such that $\gamma$ acts on the fibre of $R$ exchanging two points, say $L_1\mapsto L_2$ and $L_2\mapsto L_1$, and as identity on the remaining other points of the fibre. 
We now consider the action under deck transformation of $\gamma$ for the natural map $P \dashrightarrow F$ defined by $\psi$. A moment's thought tells us that under the action of $\gamma$ the point $p=(L_1,L_2)\in P$, with $\psi(L_1)=\psi(L_2)=R$, is mapped to $(L_2,L_1)$.
Thus $P$ is irreducible as we claimed.
\end{proof}

\appendix\section{Singular cubic surfaces}\label{appendix:cubic-surfaces}
In this appendix we treat degenerations of Cayley cubics we introduced in Diagram \eqref{degeneration-surfaces}, focussing on lines, generalised twisted cubics and the restrictions of the Voisin maps $\varphi$, and $\psi$ to a cubic surface. 

\subsection*{Singular cubic surfaces of type $4A_1$}

Under a suitable change of coordinate every cubic surface with 4 singular points of type $A_1$ can be expressed as zero locus of the polynomial
\begin{align*}
    t_0t_1t_2 + t_0t_1t_3 + t_0t_2t_3 + t_1t_2t_3.
\end{align*}
Its singular locus consists of the coordinates points
\begin{align*}
    p_0=(1:0:0:0),\quad p_1=(0:1:0:0),\quad p_2=(0:0:1:0),\quad p_3=(0:0:0:1).
\end{align*}
Such a surface can be obtained as the anticanonical model of the blow-up of $\mathbb P^2$ in special configuration of points.
Indeed, let $\widetilde S$ be the blow-up of $\mathbb P^2$ in the six points given by the intersection of 4 general lines $L_0, L_1,...,L_3$.
Then the anticanonical linear system of $\widetilde S$ maps to a cubic surfaces $S$ and contracts the strict transforms of $L_i$ to 4 singular $A_1$ points.
The exceptional divisors map to lines in a tetrahedron-like arrangement, we refer to them as edges, and denote $E_{ij}$ the line connecting the $i$-th and the $j$-th coordinate point.
In particular, the edge $E_{ij}$ is cut by the equations $(t_k=t_l=0)$ where $\{i,j,k,l\}=\{0,1,2,3\}$.
Besides the edges there are 3 more lines on $S$ Joining opposite edges; we denote by $J_{ij,kl}$ the line intersecting the edges $E_{ij}$ and $E_{kl}$, where $\{i,j,k,l\}=\{0,1,2,3\}$. These are images of the strict transforms of lines through the blown-up points corresponding to the edges it intersects. For example, the line $J_{01,23}$ is the zero locus $(t_0+t_1 = t_2+t_3=0)$.

Generalised twisted cubics on singular cubic surfaces have been studied in \cite[\S~2]{LLSvS}. Any generalised twisted cubic on an integral normal cubic surface moves in a 2-dimensional linear system and can be described as the image of the pull-back of a line in $\mathbb P^2$ along the diagram of a determinantal representation of $S$. The determinantal representation for the Cayley cubic described above defines $K\in Z'(S)$ which contains as reducible elements of the family, the union of all the edges passing through a fixed vertex of the tetrahedron:
 \[
 K = [E_{ij}\cup E_{ik} \cup E_{il}] \in Z'(S).
 \]

\begin{proof}(of Lemma~\ref{lem: opposite edges})
    The proof is elementary and can be checked directly by the definition of the Voisin map $\varphi$. For example to compute $\varphi(E_{02}, E_{13})$, consider the plane $\pi=(t_1=0)$ containing the line $E_{02}$ and the point $p\in E_{13}$. Its intersection with $S$ is given by $\pi\cap S=E_{02}\cup E_{03}\cup E_{23}$, hence $\varphi(E_{02}, E_{13}) = \left[E_{03}\cup E_{13} \cup E_{23}\right]$ and the claim is shown.

    The other cases are completely similar.
\end{proof}

\subsection*{Singular cubic surfaces $2A_1A_3$}
Let $S$ be a cubic surface with a singular point $P_2$ of type $A_3$, and 2 singular points $P_0,P_3$ of type $A_1$. It contains exactly 5 lines as depicted in Figure~\ref{fig:2A1A3}: the line $R_{23}$ through $P_2,P_3$; the line $R_{02}$ through $P_0,P_2$; the line $R_{03}$ through $P_{0}$, $P_3$; the line $L_1$ containing the only singular point $P_2$, and the line $L_2$ contained in the smooth locus.

Such a surface is given in an opportune coordinate system by the equation
    \[
    t_0t_2t_3 - t_1^2t_3-t_0t_1^2 = 0.
    \]
Its singular locus consists of the point $P_2:=(0:0:1:0)$ of type $A_3$ and of the points $P_0:=(1:0:0:0)$, $P_3:=(0:0:0:1)$ of type $A_1$.\\
The lines are then described by the equations $R_{23}= (t_0=t_1=0),R_{02}= (t_1=t_3=0)$ and $R_{03}= (t_1=t_2=0)$, $L_1=(t_0=t_3=0)$, and $L_2=(t_2=t_0+t_3=0)$.

    \begin{proof}(of Proposition~\ref{prop:2A1A3-residual})
        The proof can be checked through straightforward computations. For example, intersecting the surface $S$ with the plane $\pi=(t_0=0)$, one gets:
        \begin{align*}
            \pi\cap  S &= (t_0=0)\cap (t_0t_2t_3 - t_1^2t_3-t_0t_1^2 = 0) \\
            &= (t_0=0)\cap(t_1^2t_3=0) = 2R_{23} + L_1,
        \end{align*}
        which shows that $L_1$ is residual to $R_{23}.$
    \end{proof}

\subsection*{Singular cubic surfaces $A_1A_5$}

A singular cubic surface $S$ with exactly one $A_1$ and one $A_5$ singularities is given in an opportune of coordinate system by the equation
    \[
    t_3(t_0t_2-t_1^2)+t_0^3 = 0
    \]
    whose singular locus consists of the 2 points $P_1:=(0:0:0:1)$ of type $A_5$ and $P_2:=(0:0:1:0)$ of type $A_1$. There are exactly 2 lines on $S$ and they have equations $L_1 = (t_0=t_1=0)$ and $L_2= (t_0=t_3=0)$.

\begin{proof}(of Proposition~\ref{prop: lines-A1A5})
    This follows from the straightforward computation
    \[
    (t_3=0)\cap S = 3L_2, \quad
    (t_0=0)\cap S = L_1+2L_2.
    \]
\end{proof}
We now study non-normal cubic surfaces, whose analysis is necessary to prove Corollary~\ref{cor:lineSecTyp}.
\subsection*{The cubic surface $X_6$}
In a suitable system of coordinates, the surface $X_6$ is given by the equation
\begin{align*}
    t_0^2t_2 + t_1^2t_3 = 0.
\end{align*}
It is a non-normal surface whose singular locus consists of the line $L=(t_0=t_1=0)$.

\begin{proposition}
    The line $L$ is not triple on $X_6$.
\end{proposition}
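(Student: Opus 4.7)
The plan is straightforward: since $L$ is the singular locus of $X_6$, any plane containing $L$ will automatically meet $X_6$ in a divisor of the form $2L + R$, and I need only check that the residual line $R$ is never equal to $L$ as the plane varies in the pencil through $L$.

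First I would verify that $\operatorname{sing}(X_6) = L$ by computing the partial derivatives of $t_0^2 t_2 + t_1^2 t_3$: they give $2t_0 t_2$, $2t_1 t_3$, $t_0^2$, $t_1^2$, which vanish simultaneously exactly on $\{t_0 = t_1 = 0\} = L$. Consequently for any plane $\pi \supset L$, the intersection $\pi \cap X_6$ contains $L$ with multiplicity at least $2$, and $\pi$ is (vacuously) tangent to $X_6$ at every smooth point of $L \cap \pi$.

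Next I would parametrize the pencil of planes through $L$ as $\pi_{[a:b]}\colon a t_0 + b t_1 = 0$, with $[a:b] \in \mathbb{P}^1$. In the chart $b \neq 0$, substituting $t_1 = -\tfrac{a}{b} t_0$ into the defining equation of $X_6$ yields
\[
t_0^2 t_2 + \tfrac{a^2}{b^2} t_0^2 t_3 \;=\; \tfrac{1}{b^2} \, t_0^2 \bigl( b^2 t_2 + a^2 t_3 \bigr),
\]
so $\pi_{[a:b]} \cap X_6 = 2L + R_{[a:b]}$ where
\[
R_{[a:b]} \;=\; \{\,a t_0 + b t_1 = 0,\ b^2 t_2 + a^2 t_3 = 0\,\}.
\]
A symmetric computation covers the remaining member $\pi_{[1:0]} = \{t_0 = 0\}$, giving residual $\{t_0 = t_3 = 0\}$.

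Finally I would rule out $R_{[a:b]} = L$ for every $[a:b]$: the points $(0:0:1:0)$ and $(0:0:0:1)$ both lie on $L$, and requiring them to satisfy $b^2 t_2 + a^2 t_3 = 0$ forces $b = a = 0$, which is impossible in $\mathbb{P}^1$. Hence $R_{[a:b]} \neq L$ for all $[a:b]$, so $L$ is not triple on $X_6$. There is no genuine obstacle in this argument; the only point requiring care is to sweep out the entire one-parameter pencil of planes through $L$ rather than inspect a single "typical" plane.
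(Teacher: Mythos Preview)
Your proof is correct and follows essentially the same approach as the paper: parametrize the pencil of planes through $L$, compute the residual line in each plane, and observe that it never coincides with $L$. The only differences are cosmetic---you use the chart $b\neq 0$ and then treat $\pi_{[1:0]}$ separately (the paper uses the chart $a\neq 0$ and treats $\pi_{(0:1)}$ separately), and you add the preliminary verification that $\operatorname{sing}(X_6)=L$ and a clean two-point test for $R_{[a:b]}\neq L$, neither of which appears in the paper but both of which are harmless additions.
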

\begin{proof}
    The proof is an elementary computation:
    Each plane containing $L$ is of the form $\pi_{(a:b)}=(at_0 + bt_1=0)$ with $(a:b)\in \mathbb P^1$.
    By considering the intersection with such a plane $\pi_{(-1:b)}$ one has:
    \begin{align*}
        X_6 \cap (-t_0 + bt_1 = 0)\ =\ \Bigl(-t_0 + bt_1 = t_1^2(b^2t_2 + t_3) =0\Bigr)\ =\ 2L + R_b
    \end{align*}
    with $R_b= \Bigl(b^2t_2 + t_3 = bt_1 + t_0 = 0\Bigr)$ different from $L$, and
    \begin{align*}
        X_6 \cap (t_1=0)\ =\ \Bigl(t_1 = t_0^2t_2=0\Bigr)\ =\ 2L + R 
    \end{align*}
    with $R= (t_1 = t_2 =0)$. This shows that $L$ is not triple.
\end{proof}

\subsection*{The cubic surface $X_7$}
In a suitable system of coordinates, the surface $X_7$ is given by the equation
\begin{align*}
    t_0t_1t_2  + t_0^2t_3 + t_1^3 = 0.
\end{align*}
It is a non-normal surface whose singular locus consists of the line $L=(t_0=t_1=0)$.

\begin{proposition}
    There is a unique plane $\pi$ making $L$ triple, i.e. such that $\pi\cap X_7 = 3L$.
\end{proposition}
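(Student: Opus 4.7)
The plan is to parametrize the pencil of planes containing the line $L = (t_0 = t_1 = 0)$ by $\mathbb{P}^1$, and then for each plane compute explicitly the intersection with the cubic surface $X_7$. Since any plane through $L$ can be written as $V(a t_0 + b t_1)$ for some $(a:b) \in \mathbb{P}^1$, the problem reduces to a handful of elementary substitutions into the defining equation $t_0 t_1 t_2 + t_0^2 t_3 + t_1^3 = 0$.

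First I would handle the distinguished plane $\pi_0 := V(t_0)$: substituting $t_0 = 0$ into the equation of $X_7$ gives $t_1^3 = 0$, so that set-theoretically $\pi_0 \cap X_7 = L$ and scheme-theoretically $\pi_0 \cap X_7 = 3L$. This already exhibits a plane making $L$ triple; what remains is to show it is the only one.

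For the remaining planes in the pencil, I would parametrize them by $c \in \mathbb{A}^1$ as $\pi_c := V(t_1 - c t_0)$. Substituting $t_1 = c t_0$ into the equation of $X_7$ yields
\begin{equation*}
t_0^2 \bigl( c t_2 + t_3 + c^3 t_0 \bigr) = 0,
\end{equation*}
so that $\pi_c \cap X_7 = 2L + R_c$ with $R_c = V(t_1 - c t_0,\ c t_2 + t_3 + c^3 t_0)$. A quick inspection shows $R_c$ always meets $L = V(t_0, t_1)$ in the single point $(0:0:1:-c)$, hence $R_c \neq L$ for every $c$. Therefore no $\pi_c$ with $c \in \mathbb{A}^1$ makes $L$ triple, and $\pi_0$ is the unique plane with this property.

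There is essentially no obstacle here: the argument is a direct calculation, and the only minor subtlety is to account correctly for the plane $V(t_0)$ separately from the affine pencil $V(t_1 - c t_0)$, which together exhaust the $\mathbb{P}^1$ of planes through $L$.
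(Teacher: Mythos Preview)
Your proof is correct and follows essentially the same approach as the paper: both parametrize the pencil of planes through $L$, verify that the plane $V(t_0)$ gives $3L$, and check by direct substitution that every other plane $V(t_1 - c t_0)$ cuts out $2L + R_c$ with $R_c \neq L$. Your explicit verification that $R_c \cap L$ is a single point (hence $R_c \neq L$) is a small extra detail that the paper leaves implicit.
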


\begin{proof}
    The proof is an elementary computation. Each plane containing $L$ is of the form $\pi_{(a:b)} =(at_0+bt_1=0)$ with $(a:b)\in \mathbb P^1$.
    By considering the intersection with such a plane $\pi_{(a:-1)}$ one has:
    \begin{align*}
        X_7 \cap (at_0 - t_1 = 0)\ =\ 
        \Bigl( at_0 - t_1 = t_0^2(a^3t_0 + at_2 +t_3) = 
        0\Bigr)\ =\
        2L + R_a.
    \end{align*}
    with $R_a = (a^3t_0 + at_2 + t_3 = t_1 - at_0 = 0)$.
    Also, one has:
    \begin{align*}
        X_7 \cap (t_0 = 0)\ =\ \Bigl(t_1^3 = t_0=0\Bigr)\ = 3L.
    \end{align*}
    Hence $\pi=(t_0=0)$ is the only plane as in the claim.
\end{proof}

\subsection*{The cubic surface $X_8$}

In a suitable system of coordinates, the surface $X_8$ is given by the equation
\begin{align*}
    t_1^3 + t_2^3 + t_1t_2t_3=0.
\end{align*}
It is a non-normal surface isomorphic to the cone over a nodal planar cubic whose singular locus consists of the line $L=(t_1=t_2=0)$.

\begin{proposition}
    There are 2 planes $\pi$ and $\pi'$ making $L$ triple, i.e. such that $\pi\cap X_8 = \pi'\cap X_8 = 3L$.
\end{proposition}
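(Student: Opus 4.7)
The plan is to parametrize the pencil of planes through $L = (t_1 = t_2 = 0)$ and compute the intersection with $X_8$ directly, in the spirit of the preceding propositions on $X_6$ and $X_7$. Any plane containing $L$ has the form $\pi_{(a:b)} = (at_1 + bt_2 = 0)$ with $(a:b) \in \P^1$, so I work in two affine charts of this pencil.

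First I test the two coordinate planes. Substituting $t_1 = 0$ into the defining equation gives $t_2^3 = 0$, so $X_8 \cap (t_1 = 0) = 3L$; by the manifest $t_1 \leftrightarrow t_2$ symmetry of the equation $t_1^3 + t_2^3 + t_1 t_2 t_3 = 0$, also $X_8 \cap (t_2 = 0) = 3L$. Hence $\pi := (t_1 = 0)$ and $\pi' := (t_2 = 0)$ are two planes making $L$ triple.

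Next I show these are the only ones. For every other plane in the pencil, I can write $t_2 = \lambda t_1$ with $\lambda \in \mathbb{C}^*$. Substituting into the equation yields
\[
t_1^3 + \lambda^3 t_1^3 + \lambda t_1^2 t_3 \;=\; t_1^2\bigl((1+\lambda^3)t_1 + \lambda t_3\bigr),
\]
so the intersection is $2L + R_\lambda$, where $R_\lambda$ is cut out by $t_2 - \lambda t_1 = 0$ and $(1+\lambda^3)t_1 + \lambda t_3 = 0$. Since $\lambda \neq 0$, the residual linear form has a nonzero $t_3$-coefficient, so $R_\lambda \neq L$ and the intersection is not $3L$.

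The computation is completely routine and no real obstacle arises. The only content is the $\mathbb{Z}/2$-symmetry $t_1 \leftrightarrow t_2$ of the defining equation, which is what produces precisely two triple planes; geometrically these correspond to the two tangent directions at the node of the base cubic, matching the description of $X_8$ as a cone over a nodal planar cubic.
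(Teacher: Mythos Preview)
Your proof is correct and follows essentially the same approach as the paper's own argument: parametrize the pencil of planes through $L$, substitute into the cubic, and read off that exactly the two coordinate planes $t_1=0$ and $t_2=0$ give $3L$ while every other plane yields $2L$ plus a residual line distinct from $L$. The only cosmetic difference is that you invoke the $t_1\leftrightarrow t_2$ symmetry to handle one of the two coordinate planes, whereas the paper computes each case directly.
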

\begin{proof}
    The proof is an elementary computation: Each plane containing $L$ is of the form $\pi_{(a:b)} =(at_1+bt_2=0)$ with $(a:b)\in \mathbb P^1$.
    By considering the intersection with such a plane $\pi_{(1:-b)}$ one has:
    \begin{align*}
        X_8 \cap (t_1 - bt_2 = 0)\ =\ \Bigl(t_1 - bt_2 =  t_2^2((b^3+1)t_2 + bt_3) = 0\Bigr)\ =\ 2L + R_b.
    \end{align*}
    with $R_b = (t_1 - bt_2 = (b^3+1)t_2 + bt_3=0)$.
    Also, one has:
    \begin{align*}
        X_8 \cap (t_2 = 0)\ =\ \Bigl(t_1^3 = t_2=0\Bigr)\ =\ 3L.
    \end{align*}
    The planes $\pi = (t_1=0)$ and $\pi'= (t_2=0)$ are hence the only planes as in the claim.
\end{proof}

\bibliography{literatur}
\bibliographystyle{alpha}

\end{document}